\documentclass[11pt,a4paper]{article}
\usepackage[margin = 1.in]{geometry}

\usepackage{amsmath}
\usepackage{amssymb}
\usepackage{array}
\usepackage{enumerate}
\usepackage{algorithm}
\usepackage{algorithmic}
\usepackage{bm}
\usepackage[font=small,labelfont=bf]{caption}
\usepackage{subcaption}
\usepackage{changepage}
\usepackage[percent]{overpic}
\usepackage{amsfonts}
\usepackage{graphicx}
\usepackage{stackrel}
\usepackage{epstopdf}
\ifpdf
  \DeclareGraphicsExtensions{.eps,.pdf,.png,.jpg}
\else
  \DeclareGraphicsExtensions{.eps}
\fi
\usepackage{amsmath}
\usepackage[amsmath,thmmarks,hyperref]{ntheorem}
\usepackage{xr-hyper}
\usepackage{ifpdf}
\usepackage[hidelinks]{hyperref}
\usepackage[capitalize,nameinlink]{cleveref}
\usepackage[all]{hypcap}
\usepackage{graphics,graphicx}
\usepackage{xcolor}

\theoremstyle{plain}
\theoremheaderfont{\normalfont\sc}
\theorembodyfont{\normalfont\itshape}
\theoremseparator{.}
\theoremsymbol{}
\newtheorem{theorem}{Theorem}

\newcommand{\newthm}[2]{
  \theoremstyle{plain}
  \theoremheaderfont{\normalfont\sc}
  \theorembodyfont{\normalfont\itshape}
  \theoremseparator{.}
  \theoremsymbol{}
  \newtheorem{#1}[theorem]{#2}
}

\newcommand{\proofbox}{\vbox{\hrule height0.6pt\hbox{\vrule height1.3ex width0.6pt\hskip0.8ex\vrule width0.6pt}\hrule height0.6pt}}

\theoremstyle{nonumberplain}
\theoremheaderfont{\normalfont\itshape}
\theorembodyfont{\normalfont}
\theoremseparator{.}
\theoremsymbol{\proofbox}
\newtheorem{proof}{Proof}

\newthm{lemma}{Lemma}
\newthm{corollary}{Corollary}
\newthm{proposition}{Proposition}
\newthm{definition}{Definition}

\theoremstyle{nonumberplain}
\theoremheaderfont{\normalfont\itshape}
\theorembodyfont{\normalfont}
\theoremseparator{.}
\theoremsymbol{\proofbox}
\newtheorem{proofsketch}{Proof (sketch)}

\newcommand{\newremark}[2]{
  \theoremstyle{plain}
  \theoremheaderfont{\normalfont\itshape}
  \theorembodyfont{\normalfont}
  \theoremseparator{.}
  \theoremsymbol{}
  \newtheorem{#1}[theorem]{#2}
}

\newremark{remark}{Remark}
\newremark{assumption}{Assumption}
\crefname{assumption}{Assumption}{Assumption}
\creflabelformat{enumi}{(#2#1#3)}
\crefname{enumi}{}{}

\usepackage{amsopn}
\newcommand{\R}{\mathbb R}
\newcommand{\C}{\mathbb C}
\newcommand{\N}{\mathbb N}
\newcommand{\xx}{\mathbf x}
\newcommand{\uu}{\mathbf u}
\newcommand{\nn}{\mathbf n}
\newcommand{\vv}{\mathbf v}
\newcommand{\ww}{\mathbf w}
\newcommand{\pp}{\mathbf p}
\newcommand{\qq}{\mathbf q}
\newcommand{\gbf}{\mathbf g}

\newcommand{\oomega}{\bm{\omega}}
\newcommand{\mcB}{\mathcal B}
\newcommand{\mcF}{\mathcal F}
\newcommand{\mcI}{\mathcal I}
\newcommand{\mcJ}{\mathcal J}
\newcommand{\mcL}{\mathcal L}
\newcommand{\mcO}{\mathcal O}
\newcommand{\qden}{\qq_N^{\Xi_S}}
\newcommand{\Gfull}{G^{\Xi_S}}
\newcommand{\Gpart}{G_N^{\Xi_S}}
\newcommand{\Pspace}[1]{\mathbb P_{#1}\left(\C\right)}
\newcommand{\PspaceV}[2]{\mathbb P_{#1}\left(\C;#2\right)}
\newcommand{\PspaceN}[1]{\mathbb P_{#1}^\star\left(\C\right)}
\newcommand{\Pspaced}[1]{\mathbb P_{#1}\left(\C^d;\C\right)}
\newcommand{\PspacedN}[1]{\mathbb P_{#1}^\star\left(\C^d;\C\right)}
\newcommand{\ball}[2]{\mathcal B(#1,#2)}
\newcommand{\ballN}[3]{\mathcal B_{#3}(#1,#2)}
\DeclareMathOperator{\Dom}{Dom}
\DeclareMathOperator{\capac}{Cap}
\newcommand{\dual}[3]{\langle #1,#2\rangle_{#3}}
\newcommand{\dualV}[2]{\dual{#1}{#2}{V}}
\DeclareMathOperator{\dd}{d}
\newcommand{\deriv}[2]{\frac{\dd^{#2}}{\dd{#1}^{#2}}}
\newcommand{\derivi}[3]{\frac{\dd^{#3}}{\dd{#1}_{#2}^{#3}}}
\newcommand{\dmu}[1]{\deriv{\mu}{#1}}
\newcommand{\norm}[2]{\left\|#1\right\|_{#2}}
\newcommand{\normV}[1]{\left\|#1\right\|_V}
\newcommand{\normW}[1]{\left\|#1\right\|_W}
\newcommand{\abs}[1]{\left|#1\right|}
\newcommand{\abso}[1]{|#1|}
\newcommand{\normT}[1]{{\left\vert\kern-0.25ex\left\vert\kern-0.25ex\left\vert #1\right\vert\kern-0.25ex\right\vert\kern-0.25ex\right\vert}}
\newcommand{\padegen}[3]{#1_{#2}^{#3}}
\newcommand{\padefun}[2]{\padegen{#1}{N}{#2}}
\newcommand{\padex}[1]{\padefun{#1}{\Xi_S}}
\newcommand{\pade}{\padex{u}}
\newcommand{\padet}{\padefun{u}{Z_S}}
\newcommand{\padek}{\padegen{u}{N_k}{\Xi^{(k)}}}
\newcommand{\den}{Q_N^{\Xi_S}}
\newcommand{\dent}{Q_N^{Z_S}}
\newcommand{\denk}{Q_{N_k}^{\Xi^{(k)}}}
\newcommand{\jfun}{j^{\Xi_S}}
\newcommand{\jfunt}{j^{Z_S}}
\newcommand{\jfunk}{j^{\Xi^{(k)}}}

\title{Interpolatory rational model order reduction of parametric problems lacking uniform inf-sup stability\thanks{This work was funded by the Swiss National Science Foundation (SNF) through project no.~182236.}}
\author{Davide Pradovera\thanks{CSQI, EPFL, Lausanne, Switzerland (\email{davide.pradovera@epfl.ch}).}}

\newcommand{\email}[1]{\protect\href{mailto:#1}{#1}}

\crefname{section}{section}{sections}
\crefname{subsection}{subsection}{subsections}
\Crefname{section}{Section}{Sections}
\Crefname{subsection}{Subsection}{Subsections}
\Crefname{figure}{Figure}{Figures}
\crefformat{equation}{\textup{#2(#1)#3}}
\crefrangeformat{equation}{\textup{#3(#1)#4--#5(#2)#6}}
\crefmultiformat{equation}{\textup{#2(#1)#3}}{ and \textup{#2(#1)#3}}
{, \textup{#2(#1)#3}}{, and \textup{#2(#1)#3}}
\crefrangemultiformat{equation}{\textup{#3(#1)#4--#5(#2)#6}}%
{ and \textup{#3(#1)#4--#5(#2)#6}}{, \textup{#3(#1)#4--#5(#2)#6}}{, and \textup{#3(#1)#4--#5(#2)#6}}
\Crefformat{equation}{#2Equation~\textup{(#1)}#3}
\Crefrangeformat{equation}{Equations~\textup{#3(#1)#4--#5(#2)#6}}
\Crefmultiformat{equation}{Equations~\textup{#2(#1)#3}}{ and \textup{#2(#1)#3}}
{, \textup{#2(#1)#3}}{, and \textup{#2(#1)#3}}
\Crefrangemultiformat{equation}{Equations~\textup{#3(#1)#4--#5(#2)#6}}%
{ and \textup{#3(#1)#4--#5(#2)#6}}{, \textup{#3(#1)#4--#5(#2)#6}}{, and \textup{#3(#1)#4--#5(#2)#6}}
\crefdefaultlabelformat{#2\textup{#1}#3}

\ifpdf
\hypersetup{
  pdftitle={Interpolatory rational model order reduction of parametric problems lacking uniform inf-sup stability},
  pdfauthor={Davide Pradovera}
}
\fi

\newcommand{\hide}[1]{}
\newcommand{\davide}[1]{{#1}}

\begin{document}

\maketitle

\begin{abstract}
We present a technique for the approximation of a class of Hilbert space-valued maps which arise within the framework of Model Order Reduction for parametric partial differential equations, whose solution map has a meromorphic structure. Our MOR stategy consists in constructing an \emph{explicit rational} approximation based on few snapshots of the solution, in an interpolatory fashion. Under some restrictions on the structure of the original problem, we describe \emph{a priori} convergence results for our technique, hereafter called \emph{minimal rational interpolation}, which show its ability to identify the main features (e.g. resonance locations) of the target solution map. We also investigate some procedures to obtain \emph{a posteriori} error indicators, which may be employed to adapt the degree and the sampling points of the minimal rational interpolant. Finally, some numerical experiments are carried out to confirm the theoretical results and the effectiveness of our technique.
\end{abstract}

\noindent
{\bf Keywords}: Model order reduction, rational approximation, Hilbert space-valued meromorphic maps, non-coercive parametric PDEs, Helmholtz equation, frequency response.
\vspace*{0.5cm}

\noindent
{\bf AMS Subject Classification}: 30D30, 41A20, 41A25, 35J05, 35P15, 65D15.

\section{Introduction}
Mathematical models based on PDEs are employed to analyze numerically a wide array of physical, financial, and engineering-related phenomena. In many situations, such models depend on one or more parameters, either because of uncertainties or for control/design purposes, and multiple solves of the model have to be performed for different parameter values.

Often the ``naive'' approach of solving the original problem (which we will refer to as \emph{full order model}, FOM) as many times as required is not feasible, for instance because the number of evaluations is too large, or because a real-time solution is required. In such cases, model order reduction (MOR) is employed to reduce the computational effort needed for the solution of the FOM. This is achieved by constructing a surrogate model, whose solution is managed to be close to that of the FOM. The construction of the reduced model often requires a considerable computational cost, which, however, can be done \emph{offline} in a preliminary phase, whereas its evaluation at any parameter value is quite cheap, and can be carried out \emph{online} in real-time.

Many MOR techniques have been proposed for general FOMs, with the most notable and widely applied being the Reduced Basis (RB) method \cite{Chaturantabut2010,Chen2010,Chevreuil2012,Grepl2007,Hesthaven2016,Quarteroni2015,Rozza2008}. The RB method, in its simplest form, assumes that few samples of the solution of the FOM (snapshots) are enough to capture the main features of the solution manifold, i.e. the set of all solutions of the FOM for all parameter values. Exploiting this idea, a surrogate model is built by projecting the FOM onto a subspace generated by few pre-computed snapshots. The effectiveness of the RB method has been verified in many cases. However, it is possible to find some parametric problems for which projection-based MOR does not perform optimally. Such examples are often characterized by some irregularity of the FOM (e.g. \davide{nonsmooth dependence on the parameters}, non-linearity, lack of stability, etc. of the differential operator therein).

One simple case falling \davide{into} this category, which is of primary importance in our discussion, is the time-harmonic wave (Helmholtz) equation with parametric wavenumber:
\begin{equation}\label{eq:helmh}
\text{for }k\in K\subset\C,\;\text{find }u(k)\in V\;\text{s.t.}\; -\Delta u(k)-k^2u(k)\stackrel{(W)}{=}f\in W\text{,}
\end{equation}
where $V$ and $W\supset V$ are some suitable Banach function spaces. Depending on the choice of $K$ and of the boundary conditions which complement \cref{eq:helmh}, the problem above may lack inf-sup stability over $K$ \cite{Monk2003}, due to the presence of resonant wavenumbers. If one applies carelessly the RB method (in both its main versions, POD and greedy \cite{Quarteroni2015,Rozza2008}) to \cref{eq:helmh}, one may observe the appearance of spurious (``non-physical'') resonances in the surrogate model.

Discussions concerning this effect can be found in works related to MOR methods for dynamical systems (e.g. Krylov subspace methods \cite{Freund2003,Grimme1997}), where problems of similar form are quite common, due to the need for frequency-domain computations. Usually, if the number of parameters is small (often, just 1, the frequency), explicit rather than implicit approximants are considered, with the surrogate model being built by enforcing interpolation or moment matching conditions at the sample points in the parameter space, possibly in a Least Squares (LS) way. The main representative\davide{s} of this class of methods \davide{are the Loewner framework \cite{Antoulas2017,Antoulas2010,Baur2011,Beattie2014,Ionita2014} and} the vector fitting (VF) algorithm \cite{Drmac2015,Grimme1997,Grivet2016,Gustavsen1999,Lei2010,Weile1999}, which \davide{are} tailored to the specific structure of the FOM: rational functions are employed, with the objective of representing each resonance of the FOM by a pole of the approximant. This turns out to be particularly useful when the resonances of the FOM have a physical meaning, since their approximation is implicitly enclosed in the surrogate model, from which it can be obtained with small or no computational effort.

More recently, in the wake of these methods for dynamical systems, and somehow trying to profit from their main advantages, univariate LS Pad\'e approximants have been introduced and studied, both in a standard \cite{Bonizzoni2016,Bonizzoni2018} and a fast \cite{Bonizzoni2018b} version, in the context of a single parameter. Such techniques are based on multiple solves of the FOM at a single parameter value, in the same spirit as Krylov subspace methods, but yield an explicit rational approximant like VF. Actually, LS Pad\'e approximants are actually quite comparable to VF, the main differences between the methods being:
\begin{enumerate}[(i)]
\item\label{en:over} in VF, one usually must achieve a reasonably high ``sampling density'' \cite{Drmac2015}, namely use at least $2N$ samples to approximate $N$ resonances; conversely, fast LS Pad\'e approximants can get away with half as many samples;
\item the fast version of LS Pad\'e approximants relies on the high-dimensionality of the samples, namely on their linear independence, whereas VF can be applied even for scalar outputs (this is not actually a limitation of LS Pad\'e approximants, since it suffices to build a rational approximation of the solution map, and then to apply the desired linear functional to such surrogate solution);
\item\label{en:conv} the error convergence of LS Pad\'e approximants with respect to the number of samples and to the degree of the denominator is well understood for a reasonably broad class of problems \cite{Bonizzoni2016,Bonizzoni2018b}, while the convergence behavior of the VF error has, to our knowledge, yet to be analyzed thoroughly \cite{Drmac2015};
\item\label{en:centered} LS Pad\'e approximants are computed from moments of the solution at a single parameter, the explicit computation of which may feature numerically instabilities, \davide{see \cite{BonizzoniPAMM}}; instead, VF uses distributed samples in a LS-Lagrange interpolation fashion, \davide{for which ill-conditioning issues are much milder}.
\end{enumerate}

In this paper we wish to develop and analyze an extension of fast LS Pad\'e approximants which overcomes \cref{en:centered} by allowing a distributed parameter sampling, while keeping \cref{en:over} and \Cref{en:conv} intact. In particular, in \cref{sec:Preliminary results} we introduce our method, which we name \emph{minimal rational interpolation}, and we state more precisely to which kinds of FOMs our theory applies to. Then, in \cref{sec:Convergence theory} we describe the main convergence results concerning minimal rational interpolation. Afterwards, in \cref{sec:Greedy} we discuss briefly the topic of \emph{a posteriori} error/residual estimation for our technique. \davide{Possible extensions to multi-parameter problems are described in \cref{sec:conclusions_multi}.} In \cref{sec:examples} we show two numerical examples to verify the effectiveness of our method and of the presented error indicator, as well as the theoretical convergence rates. Some concluding remarks are in \cref{sec:conclusions}.

\section{Description of the method}\label{sec:Preliminary results}
Let $\left(V, \dual{\cdot}{\cdot}{V}\right)$ be a Hilbert space over $\C$, with induced norm $\normV{\cdot}$, and $K\subset\C$ be compact. Our task is to find a surrogate for a given map $u:K\to V$. Notably, we seek an approach which builds a surrogate starting only from few evaluations (snapshots) of $u$. Assuming $u$ to be the outcome of a complex computational model, such technique qualifies as \emph{non-intrusive}, in the sense that it can employ any available solver as a ``black box''.

We assume that $S>0$ samples of $u$ at the points $\Xi_S=\{\mu_j\}_{j=1}^S\subset K$ are available. In particular, under some additional regularity assumptions on $u$, we allow confluence of the sample points: if $\mu_j$ appears $K_j+1$ times in $\Xi_S$, we assume to have evaluated $u$, as well as its derivatives with respect to $\mu$ up to order $K_j$, at the point $\mu_j$.

In order to set up our approximation, it is necessary to specify how many ``resonances'' of $u$ we wish to approximate. To this aim, we consider the integer $N\in\{0,\ldots,S-1\}$, as well as the set of normalized polynomials of degree at most $N$
\begin{equation}
\PspaceN{N}=\left\{Q\in\Pspace{N},\;\normT{Q}_N=1\right\}\text{,}
\end{equation}
with \davide{$\Pspace{N}$ the space of $\C$-valued polynomials of degree up to $N$, and} $\normT{\cdot}_N$ an arbitrary (but fixed) Hilbertian norm on $\Pspace{N}$. Our aim will be to approximate $N$ resonances of $u$ by the roots of some element of $\PspaceN{N}$.

Let $I^{\Xi_S}$ be the polynomial interpolation operator\footnote{For stability reasons it may be wise to replace $I^{\Xi_S}$ with a Least Squares-type interpolator. With some effort, our results can be generalized to such case.} for $V$-valued functions based on samples at the points $\Xi_S$. Namely, given $\phi:\Xi_S\to V$, the interpolant $I^{\Xi_S}(\phi)$ is the element of \davide{the space of $V$-valued polynomials}
\begin{equation}
\PspaceV{S-1}{V}=\left\{\mu\mapsto\sum_{i=0}^{S-1}p_i\mu^i,\;p_i\in V\,\text{ for }i=0,\ldots,S-1\right\}
\end{equation}
such that $\phi(\mu')=I^{\Xi_S}(\phi)(\mu')$ for $\mu'\in\Xi_S$. In case of confluent sample points, such interpolation condition has to be interpreted in a Hermite sense, with derivatives of the interpolant matching those of the original function.

\begin{definition}[Minimal rational interpolants]\label{def:approx}
A minimal rational interpolant of $u$ of type $[S-1/N]$ with samples at $\Xi_S$ is a rational function
\begin{equation*}
\pade(\mu)=I^{\Xi_S}\left(u\den\right)(\mu)\,{\Big/}\,\den(\mu),\quad\mu\in\C,\den(\mu)\neq 0\text{,}
\end{equation*}
where the denominator $\den$ minimizes the functional
\begin{equation}\label{eq:target_functional}
\davide{\Pspace{N}\ni Q\;\mapsto\;}\jfun(Q)^2=\normV{\frac{1}{(S-1)!}\dmu{S-1}I^{\Xi_S}(uQ)}^2\text{,}
\end{equation}
over $\PspaceN{N}$.
\end{definition}

\davide{The specific expression of $\jfun$ is the natural generalization of the corresponding functional in fast LS Pad\'e approximation \cite{Bonizzoni2018b}. We will see that this form is essential to prove all our convergence results, starting from the central results described in \cref{lem:optimal_j}.}

We remark that, since $\jfun$ is convex, \davide{see \cref{eq:quadratic_form},} the existence of a minimizer $\den$ is guaranteed by the compactness of $\PspaceN{N}$. From a more practical perspective, the minimization of $\jfun$ can be carried out as follows:
\begin{enumerate}[(i)]
\item\label{item:orthoV} build a $V$-orthonormal basis $\left\{\varphi_i\right\}_{i=1}^S$ of the span of the $S$ samples $\left\{u(\mu_j)\right\}_{j=1}^S$, as well as the corresponding ``component map''
\begin{equation*}
\ww(\mu_j)=\left(\dual{u(\mu_j)}{\varphi_i}{V}\right)_{i=1}^S,\quad\text{so that}\quad u(\mu_j)=\sum_{i=1}^Sw_i(\mu_j)\varphi_i,\quad\text{for }j=1,\ldots,S\text{;}
\end{equation*}
\davide{in practice, the basis and the component map can be computed via the Gram-Schmidt procedure or by a generalized QR decomposition of the snapshots;}
\item\label{item:orthoP} fix a basis $\left\{\psi_l\right\}_{l=0}^N$ of $\Pspace{N}$, orthonormal with respect to the scalar product associated to $\normT{\cdot}_N$, and employ it to represent any arbitrary $Q\in\PspaceN{N}$ by its component vector $\qq$ (by construction, $\norm{\qq}{2}=1$); \davide{for a more detailed discussion on practical choices of the polynomial basis, we refer to \cref{sec:Convergence preliminaries};}
\item \davide{build the Gramian factor
\begin{equation*}
\Psi=\left(\frac{1}{(S-1)!}\dmu{S-1}I^{\Xi_S}(w_i\psi_l)\right)_{i=1,\ldots,S,\,l=0,\ldots,N}\in\C^{S\times(N+1)}\text{,}
\end{equation*}
which, by linearity, allows to express the target functional as a quadratic form:
\begin{align}
\jfun(Q)^2=&\frac{1}{(S-1)!^2}\dualV{\dmu{S-1}I^{\Xi_S}(uQ)}{\dmu{S-1}I^{\Xi_S}(uQ)}\nonumber\\
=&\sum_{i,i'=1}^S\sum_{l,l'=0}^N\frac{1}{(S-1)!^2}q_l\dmu{S-1}I^{\Xi_S}(w_i\psi_l)\overline{q_{l'}\dmu{S-1}I^{\Xi_S}(w_{i'}\psi_{l'})}\dualV{\varphi_i}{\varphi_{i'}}\nonumber\\
=&\sum_{i=1}^S\sum_{l,l'=0}^Nq_l\overline{q_{l'}}\,\Psi_{il}\overline{\Psi_{il'}}=\qq^H\Psi^H\Psi\qq\label{eq:quadratic_form}
\end{align}
($\overline{\cdot}$ and $\cdot^H$ denote conjugation and conjugate transposition, respectively);}
\item \davide{by SVD}, obtain the required minimizer, \davide{as the minimal right singular vector of the Gramian factor $\Psi$}.
\end{enumerate}

\davide{We remark that introducing the orthogonal bases $\left\{\varphi_i\right\}_{i=1}^S$ and $\left\{\psi_l\right\}_{l=0}^N$ is not strictly necessary for the minimization of $\jfun$. However, they allow to state the SVD problem in the usual Euclidean norm, and to reduce it to size $S$, independently of the dimension of $V$.}


\subsection{Parametric problem framework}
While the map $u$ introduced above may be a very general Hilbert-space valued function, we are often interested in approximating maps of a quite specific type. Indeed, assume that we are given a parametric problem:
\begin{equation}\label{eq:parametric_problem}
\text{for }\mu\in K\subset\C,\;\text{find }u(\mu)\in\Dom(\mcF_\mu)\;\text{s.t.}\;\mcF_\mu(u(\mu))=0\text{,}
\end{equation}
with $\mcF_\mu:V\supset\Dom(\mcF_\mu)\to V$ a family of densely defined operators, depending continuously on $\mu$. Provided the set $K^\star\subset K$ of values of $\mu$, for which a unique $u(\mu)$ solving \cref{eq:parametric_problem} exists, is non-empty, we focus on the \emph{solution map} associated to \cref{eq:parametric_problem}:
\begin{equation}\label{eq:solution_map}
\begin{aligned}
u:K^\star & \to V\\
\mu & \mapsto u(\mu)\text{ solution of }\cref{eq:parametric_problem}\text{.}
\end{aligned}
\end{equation}

\davide{In the following, we restrict our analysis to a special class of solution maps, namely meromorphic $V$-valued functions over $\C$ with simple poles:
\begin{equation}\label{eq:map_decomposition}
u(\mu)=\sum_{\lambda\in\Lambda}\frac{v_\lambda}{\lambda-\mu},\quad\mu\in\C\text{.}
\end{equation}
We denote by $\{v_\lambda\}_{\lambda\in\Lambda}\subset V$ the set of \emph{residues}, which we assume to be $V$-square-summable: $\sum_{\lambda\in\Lambda}\normV{v_\lambda}^2<\infty$. Moreover, we assume that the set $\Lambda$ of \emph{poles}, or \emph{resonances}, is countable, with no finite limit point. If $\Lambda$ is not finite, \cref{eq:map_decomposition} has to be understood in the sense of $V$-convergence, for an arbitrary ordering of $\Lambda$. Without loss of generality, we will assume that $\Lambda$ is unbounded: if this is not the case, one can always extend $\Lambda$ by adding (countably) infinitely many fictitious poles at $\infty$ with arbitrary corresponding residues.

The bulk of our results will be obtained under the additional assumption that the residues form an orthogonal family, i.e. that $\dualV{v_\lambda}{v_{\lambda'}}=0$ whenever $\lambda\neq\lambda'$. This allows considerable simplifications in our derivation: in particular, orthogonal residues have the great advantage of introducing no ``correlation'' (in the $V$-topology) between components of the solution corresponding to different resonances. Still, many of our conclusions can be generalized to the non-orthogonal case: we defer a discussion of such extensions to \cref{sec:nonortho}.

To motivate our interest in this class of functions, we give here some examples of operator families $\{\mcF_\mu\}_{\mu\in K}$ which lead to meromorphic solution maps; for their approximation, it is reasonable to employ rational functions, as minimal rational interpolants do. However, we remark that the theory that we develop might not necessarily apply to all the cases which we list here.
\begin{itemize}
\item Frequency-domain dynamical systems: $\mcF_\mu(u)=(A-\mu E)u+B$, with $A,E\in\C^{n\times n}$, and $B\in V=\C^{n\times n_I}$ (one column per input). By employing a determinantal representation \cite{Baker1996}, one can easily show that the solution map is a rational function; in particular, if the pencil $(A,E)$ admits $n$ (not necessarily finite) eigenvalues counting multiplicity, then $u$ is of the form \cref{eq:map_decomposition} \cite{Demmel2000}. 
Depending on the spectral properties of $A$, one may be able to ensure residue orthogonality: for instance, it suffices for $E^\dagger A$ to be normal, where ${}^\dagger$ denotes the Moore-Penrose pseudo-inverse.
\item Polynomial eigenvalue-like problems: $\mcF_\mu(u)=A(\mu)u+B$, where $A(\mu)\in\C^{n\times n}$ is a polynomial in $\mu$, and $B\in V=\C^{n\times n_I}$. Using augmentation-based approaches \cite{Demmel2000,Guillaume1999}, one can cast the problem as a frequency-domain dynamical system of size $nM$, where $M$ is the degree of $A$. This implies that the solution map of the original problem (of size $n$) is a rational function; in order to check whether it is actually of the form \cref{eq:map_decomposition}, one has to investigate the spectral properties of the augmented $(nM)\times(nM)$ problem.
\item Helmholtz-type problems: $\mcF_\mu(u)=(\mcL-\mu\mcI)u-f$, where $\mcL:V\supset\Dom(\mcL)\to V$ is densely defined and bijective, with compact and normal inverse, and $f\in V$ (e.g. $V=L^2(\Omega)$, $\mcL=-\Delta$, and $\Dom(\mcL)=\Delta^{-1}(V)$). The solution map is of the form \cref{eq:map_decomposition}, and the normality of the resolvent $\mcL^{-1}$ ensures orthogonal residues \cite{Bonizzoni2018b}.
\item Analytic compact perturbations of bounded bijective operators: $\mcF_\mu(u)=(\mcJ-\mcL(\mu))u-f$, where $\mcJ:V\to V$ is bounded and bijective, $\mcL(\mu):V\to V$ is compact for all $\mu\in\C$, and analytic with respect to $\mu$, and $f\in V$; a proof of meromorphy is in \cite{Steinberg1968}. Unfortunately, the form of the solution map is, in general, not \eqref{eq:map_decomposition}: for instance, it cannot be guaranteed that the poles are simple, i.e. a $\lambda$-dependent exponent may appear in the denominator of \eqref{eq:map_decomposition}. 
\end{itemize}}

\section{Convergence theory}\label{sec:Convergence theory}
In this section, we investigate the convergence properties of minimal rational interpolants as the number of samples $S$ increases. To this aim, it becomes necessary to introduce some assumptions on the asymptotic properties of the sample set $\Xi_S$. In order to streamline the derivation, we choose to follow a potential theory-inspired approach.

\subsection{Preliminaries for convergence theory}\label{sec:Convergence preliminaries}
Fix $K\subset\C$ either as the closure of the finite region bounded by a Jordan curve or a line segment with positive length. There exist \cite{Walsh1969} a (unique) positive real number $\capac(K)$ and a continuous bijective conformal mapping
\begin{equation*}
\phi_K:\overline{\C\setminus K}\to\C\setminus\ball{0}{\capac(K)}=\{z\in\C,\abs{z}\geq\capac(K)\}\text{,}
\end{equation*}
holomorphic outside $K$, such that $\abs{\phi_K}=\capac(K)$ over $\partial K$ and $\lim_{\mu\to\infty}\abs{\phi_K'(\mu)}=1$.

The value $\capac(K)$ is the \emph{logarithmic capacity} of $K$. It is a powerful concept in complex analysis, algebraic geometry, and approximation theory, and we refer to the monograph \cite{Hille65} for a thorough introduction to the subject. In its more general framework, logarithmic capacity is well defined for any compact set in the complex plane, and coincides with the definition above under our assumptions.

Among its properties we remark some bounds for other metrics over complex sets: for any compact set $A\subset\C$ we have
\begin{equation}\label{eq:capacity_bounds}
\sqrt{\frac{\ell(A)}{\pi}}\leq\capac(A)\leq\frac{1}{2}\max_{z,w\in A}\abs{z-w}\text{,}
\end{equation}
with $\ell$ denoting the Lebesgue measure over $\C$. We remark that zero-measure sets are not guaranteed to have capacity zero: for instance, $\capac([0,4])=1$.

A crucial quantity for stating convergence results in approximation theory is the Green's potential, which is closely related to the complex magnitude of $\phi_K$:
\begin{equation}\label{eq:green_potential}
\Phi_K(\mu)=
\begin{cases}
\abs{\phi_K(\mu)}\quad&\text{if }\mu\notin K,\\
\capac(K)\quad&\text{if }\mu\in K
\end{cases}
\text{.}
\end{equation}
In particular, we use the Green's potential to induce an ordering in the set of poles $\Lambda=\{\lambda_j\}_{j=1,2,\ldots}$:
\begin{equation}\label{eq:pole_ordering}
\Phi_K(\lambda_1)\leq\Phi_K(\lambda_2)\leq\ldots\text{.}
\end{equation}
Moreover, we require that $\Phi_K(\lambda_{N+1})>\capac(K)$, i.e. that the number of poles within $K$ is at most $N$, see \cref{eq:green_potential} and \cref{eq:pole_ordering}. Indeed, we will show in the following that minimal rational interpolants fail to differentiate between poles of $u$ within $K$; hence, without the assumption $\Phi_K(\lambda_{N+1})>\capac(K)$, the ``unidentifiable'' poles would distort the approximant and, in general, prevent convergence to the target solution map.

Now, for a given finite point set $\Xi\subset\C$, we define the corresponding nodal polynomial as $\omega^\Xi(\mu)=\prod_{\mu'\in\Xi}\left(\mu-\mu'\right)$. This allows us to state the result which motivates our interest in Green's potentials.

\begin{theorem}[Fej\'er points \cite{Walsh1969}]
Let $K$ be either the closure of the finite region bounded by a Jordan curve or a line segment with positive length. There exists a sequence of sample sets $\big(\Xi_S^{K}\big)_{S=1,2,\ldots}$, whose elements are referred to as the Fej\'er points for $K$, such that
\begin{gather}
\limsup_{S\to\infty}\abs{\omega^{\Xi_S^K}}^{1/S}\leq\Phi_K\quad\text{uniformly over compact subsets of }\C\text{,}\label{eq:nodal_conv}\\
\lim_{S\to\infty}\abs{\omega^{\Xi_S^K}}^{1/S}=\Phi_K\quad\text{uniformly over compact subsets of }\C\setminus\partial K\text{.}\label{eq:nodal_conv_out}
\end{gather}
\end{theorem}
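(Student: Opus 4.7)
The plan is to construct the Fej\'er points explicitly via the conformal map and then combine a maximum-principle argument for the exterior of $K$ with an equidistribution argument for the interior. For the construction, extend $\phi_K^{-1}$ continuously to the circle $\{z\in\C,\abs{z}=\capac(K)\}$, which maps bijectively onto $\partial K$, and define the Fej\'er points of order $S$ as
\begin{equation*}
\mu_j^{(S)}=\phi_K^{-1}\bigl(\capac(K)\,e^{2\pi\mathrm{i} j/S}\bigr),\quad j=0,\ldots,S-1\text{.}
\end{equation*}
These $S$ points lie on $\partial K$ and are the push-forward of the uniform measure on the circle, which is well-known to coincide with the equilibrium measure of $K$.

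For \cref{eq:nodal_conv}, the key idea is that, outside $K$, the ratio $F_S(\mu)=\omega^{\Xi_S^K}(\mu)/\phi_K(\mu)^S$ is holomorphic (since $\abs{\phi_K}\geq\capac(K)>0$ there) and tends to $1$ at infinity, because both $\omega^{\Xi_S^K}$ and $\phi_K^S$ behave like $\mu^S$ at $\infty$ by the normalization $\lim_{\mu\to\infty}\abs{\phi_K'(\mu)}=1$. Applying the maximum principle to $F_S$ on $\overline{\C\setminus K}$ (with the value at $\infty$ included) yields
\begin{equation*}
\abs{\omega^{\Xi_S^K}(\mu)}\leq\frac{M_S}{\capac(K)^S}\abs{\phi_K(\mu)}^S,\quad M_S=\max_{\nu\in\partial K}\abs{\omega^{\Xi_S^K}(\nu)}\text{.}
\end{equation*}
The classical identification of the Chebyshev constant of $K$ with its logarithmic capacity, combined with the near-optimality of the Fej\'er nodes for the sup-norm Chebyshev problem on $K$, gives $M_S^{1/S}\to\capac(K)$. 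Taking $S$-th roots then yields $\limsup_S\abs{\omega^{\Xi_S^K}}^{1/S}\leq\Phi_K$ uniformly on compact subsets of $\C\setminus K$, while the same bound on $K$ itself follows directly from $M_S^{1/S}\to\capac(K)=\Phi_K\big|_K$.

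For the sharper statement \cref{eq:nodal_conv_out}, the matching lower bound is obtained by writing
\begin{equation*}
\frac{1}{S}\log\abs{\omega^{\Xi_S^K}(\mu)}=\frac{1}{S}\sum_{j=0}^{S-1}\log\abs{\mu-\mu_j^{(S)}}
\end{equation*}
and recognizing this as a Riemann sum for the logarithmic potential $\int_{\partial K}\log\abs{\mu-z}\,\mathrm{d}\omega_K(z)$ of the equilibrium measure $\omega_K$. Since Frostman's theorem identifies this potential with $\log\Phi_K(\mu)$ off $\partial K$ (equal to $\log\capac(K)$ inside $K$ and to $\log\abs{\phi_K(\mu)}$ outside), the Riemann sums converge to $\log\Phi_K$ uniformly on compact subsets of $\C\setminus\partial K$. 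The main obstacle is precisely the behavior on $\partial K$: the integrand $z\mapsto\log\abs{\mu-z}$ develops an integrable logarithmic singularity exactly where $\omega^{\Xi_S^K}$ has its zeros, so Riemann-sum convergence breaks down there and one cannot hope for anything stronger than the $\limsup$-estimate on $\partial K$, which is why \cref{eq:nodal_conv_out} excludes the boundary.
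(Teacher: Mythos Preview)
The paper does not supply a proof of this theorem; it is quoted from Walsh's monograph and used as input to the subsequent analysis. Your sketch follows the classical line one finds there: Fej\'er points as conformal preimages of roots of unity, a maximum-principle bound on $\omega^{\Xi_S^K}/\phi_K^S$ for the exterior estimate, and equidistribution with respect to the equilibrium measure for the exact limit away from $\partial K$.

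Two small corrections are in order. First, the assertion that the boundary extension of $\phi_K^{-1}$ is a \emph{bijection} onto $\partial K$ holds in the Jordan-curve case (Carath\'eodory) but fails for a line segment, where the extension is two-to-one except at the endpoints; the definition of the Fej\'er points survives (and indeed recovers the Chebyshev nodes), but the statement as written is inaccurate. Second, the step ``near-optimality of the Fej\'er nodes for the sup-norm Chebyshev problem on $K$ gives $M_S^{1/S}\to\capac(K)$'' is invoked as a known fact, yet in most treatments---including Walsh's---this is itself deduced from the very equidistribution argument you use afterwards. Your two halves are not circular, but the dependency structure would be cleaner if you first established weak-$*$ convergence of the node counting measure to $\omega_K$ (an immediate consequence of the construction) and then derived both $M_S^{1/S}\to\capac(K)$ and the pointwise limit from it.
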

\davide{This means that the Green's potential provides an asymptotic upperbound for the magnitude of the nodal polynomial, as long as the sample points are chosen in an optimal way}. From here onward we assume that, given $S\in\N$, the sample set $\Xi_S$ coincides with some set of Fej\'er points $\Xi_S^K$, allowing us to drop the superscript.

\begin{remark}
\davide{
Fej\'er points are explicitly known only for very simple sets $K$; this is the case, for instance, in two situations which are very common in applications: the Fej\'er points of the unit disk are the roots of unity, and for a line segment they are the Chebyshev nodes \cite{Walsh1969}. However, our theory applies even for more general sampling strategies. The technical requirement is the following: there exists $\Phi:\C\to\R$ such that
\begin{itemize}
\item $\sup_K\Phi\leq\inf_{\C\setminus K}\Phi$,
\item $\limsup_{S\to\infty}\abs{\omega^{\Xi_S}}^{1/S}\leq\Phi$ uniformly over $K$,
\item $\lim_{S\to\infty}\abs{\omega^{\Xi_S}}^{1/S}=\Phi$ uniformly over compact subsets of $\C\setminus K$.
\end{itemize}
If this is the case, then one can replace the Green's potential by $\Phi$ in the entirety of our discussion; in particular, $\Phi$ should induce the pole ordering, cf. \cref{eq:pole_ordering}.

For instance, one could take any sequence which approximates asymptotically the Fej\'er points \cite{Bos2008}. Another interesting case is that of repeated points, employing Lagrange-Hermite interpolation: given $\{\mu_i^0\}_{i=1}^L\subset\C$, one defines the sample points as $\mu_{nL+i}=\mu_i^0$ for all $n=0,1,\ldots$ and $i=1,\ldots,L$. Then, as long as $K$ is defined as the discrete set $K=\{\mu_i^0\}_{i=1}^L$, the three conditions above hold with $\Phi(\mu)=\prod_{i=1}^L\abs{\mu-\mu_i^0}^{1/L}$. In particular, this allows to analyze fast LS Pad\'e approximants \cite{Bonizzoni2018b} as a special case of minimal rational interpolants, with $L=1$ and $\Phi(\mu)=\abs{\mu-\mu_1^0}$.
}
\end{remark}

A second class of considerations involves the denominator space $\PspaceN{N}$, whose definition hinges upon the choice of the norm $\normT{\cdot}_N$. Depending on the type of result we wish to prove, we may need to introduce the following assumption on the scaling of $\normT{\cdot}_N$ with respect to $N$.

\begin{assumption}\label{as:with_scale}
There exist $\mu_0\in\C$ and three positive constants $R_{\mu_0}$, $c_{\mu_0}$, and $C_{\mu_0}$, all independent of $N$, such that
\begin{equation*}
c_{\mu_0}\prod_{j=1}^D\abs{\mu_0-z_j}\leq\normT{\prod_{j=1}^D\left(\,\cdot-z_j\right)}_N\leq C_{\mu_0}\prod_{j=1}^D\left(R_{\mu_0}+\abs{\mu_0-z_j}\right)\text{,}
\end{equation*}
for all $N\in\N$ and $\{z_j\}_{j=1}^D\in\C^D$, with $0\leq D\leq N$.
\end{assumption}

In \cite{Bonizzoni2018b} it was shown that \cref{as:with_scale} holds (with $R_{\mu_0}=c_{\mu_0}=C_{\mu_0}=1$) for the ``monomial'' norm $\normT{\cdot}_N=\normT{\cdot}_{\mu_0,N}$ defined as
\begin{equation}\label{eq:monomial_norm}
\normT{Q}_{\mu_0,N}=\sqrt{\sum_{j=0}^N\abs{q_j}^2},\quad\text{where}\quad Q=\sum_{j=0}^Nq_j\left(\,\cdot-\mu_0\right)^j\text{.}
\end{equation}
We describe in \cref{ap:with_scale} a broader framework where \cref{as:with_scale} can still be shown to hold, with the (often ill-conditioned) monomials in \cref{eq:monomial_norm} being replaced by a more general polynomial basis.

Many of our results do not rely on \cref{as:with_scale}, but exploit the following weaker result.
\begin{lemma}\label{as:no_scale}
For all $\mu_0\in\C$, there exist positive constants $R_{\mu_0}$ (independent of $N$), $c_{\mu_0,N}$, and $C_{\mu_0,N}$, such that
\begin{equation*}
c_{\mu_0,N}\prod_{j=1}^D\abs{\mu_0-z_j}\leq\normT{\prod_{j=1}^D\left(\,\cdot-z_j\right)}_N\leq C_{\mu_0,N}\prod_{j=1}^D\left(R_{\mu_0}+\abs{\mu_0-z_j}\right)\text{,}
\end{equation*}
for all $N\in\N$ and $\{z_j\}_{j=1}^D\in\C^D$, with $0\leq D\leq N$. 
\end{lemma}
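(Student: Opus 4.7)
The plan is as follows. Since $\Pspace{N}$ is a finite-dimensional vector space over $\C$ (of dimension $N+1$), all norms on it are equivalent; in particular, the given Hilbertian norm $\normT{\cdot}_N$ is equivalent to the ``monomial'' norm $\normT{\cdot}_{\mu_0,N}$ centered at $\mu_0$ defined in \cref{eq:monomial_norm}. This equivalence yields $N$-dependent positive constants $\alpha_N, \beta_N$ such that
\begin{equation*}
\alpha_N \normT{Q}_{\mu_0, N} \leq \normT{Q}_N \leq \beta_N \normT{Q}_{\mu_0, N}, \quad \text{for all } Q \in \Pspace{N}.
\end{equation*}

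The key input is then the already-recalled fact (from \cite{Bonizzoni2018b}) that the monomial norm itself satisfies the stronger \cref{as:with_scale} with $R_{\mu_0} = c_{\mu_0} = C_{\mu_0} = 1$. Applying this bound to the specific polynomial $\prod_{j=1}^D(\,\cdot - z_j)$ gives
\begin{equation*}
\prod_{j=1}^D\abs{\mu_0-z_j} \leq \normT{\prod_{j=1}^D(\,\cdot-z_j)}_{\mu_0,N} \leq \prod_{j=1}^D\bigl(1+\abs{\mu_0-z_j}\bigr),
\end{equation*}
and multiplying through by the norm-equivalence constants produces the claimed inequalities with $R_{\mu_0} = 1$, $c_{\mu_0,N} = \alpha_N$, and $C_{\mu_0,N} = \beta_N$.

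There is no substantial obstacle here; the whole argument is a one-line application of norm equivalence combined with the known monomial bound. The real content of the lemma, and the reason it is worth stating separately from \cref{as:with_scale}, is that $R_{\mu_0}$ is inherited unchanged from the monomial case and is therefore independent of $N$, while $c_{\mu_0,N}$ and $C_{\mu_0,N}$ are absorbed into the norm-equivalence factors and are permitted (and, in general, need) to depend on $N$. This $N$-independence of $R_{\mu_0}$ is what makes the lemma usable in the subsequent convergence analysis, even when \cref{as:with_scale} itself cannot be assumed.
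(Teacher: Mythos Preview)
Your argument is correct and is exactly the paper's own proof: invoke the monomial-norm bound from \cite{Bonizzoni2018b} (with $R_{\mu_0}=c_{\mu_0}=C_{\mu_0}=1$) and transfer it to $\normT{\cdot}_N$ via the equivalence of norms on the finite-dimensional space $\Pspace{N}$. Your additional remark that $R_{\mu_0}$ is inherited unchanged (hence $N$-independent) while only $c_{\mu_0,N},C_{\mu_0,N}$ absorb the equivalence constants is a useful clarification that the paper leaves implicit.
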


\begin{proof}
As described above, $\normT{\cdot}_N=\normT{\cdot}_{\mu_0,N}$ satisfies \cref{as:no_scale}. The result follows by the equivalence of all norms on a finite-dimensional space.
\end{proof}

\subsection{Convergence of minimal rational interpolant poles}\label{sec:Pole convergence}
In the present section we prove that the denominator $\den$ described in \cref{def:approx} performs well in identifying the approximate position of some of the poles $\Lambda$ of the solution map, among which those within the parameter domain $K$. As a preliminary step, it is useful to exploit the definition of $\den$ to derive a bound for the minimal value of $\jfun$ over $\PspaceN{N}$.
\begin{lemma}\label{lem:optimal_j}
Let $Q\in\Pspace{N}$. Then
\begin{equation*}
\jfun(Q)^2=\sum_{\lambda\in\Lambda}\frac{\normV{v_\lambda}^2}{\abs{\omega^{\Xi_S}(\lambda)}^2}\abs{Q(\lambda)}^2\text{.}
\end{equation*}
Moreover, for any subset $\Lambda_0$ of $\Lambda$ with cardinality at most $N$, we have
\begin{equation}\label{eq:jfun_optimum_weird}
\jfun(\den)\leq\frac{\left(\sum_{\lambda\in\Lambda\setminus\Lambda_0}\normV{v_\lambda}^2\right)^{1/2}}{\normT{\omega^{\Lambda_0}}_N}\sup_{\Lambda\setminus\Lambda_0}\abs{\frac{\omega^{\Lambda_0}}{\omega^{\Xi_S}}}\text{.}
\end{equation}
In particular, for any $\varepsilon>0$, there exists $S_{\varepsilon,N}$ such that
\begin{equation}\label{eq:jfun_optimum}
\jfun(\den)\leq (1+\varepsilon)^S\frac{\left(\sum_{j>N}\normV{v_{\lambda_j}}^2\right)^{1/2}}{\normT{\prod_{j=1}^N\left(\ \cdot-\lambda_j\right)}_N}\frac{\prod_{j=1}^N\abs{\lambda_{N+1}-\lambda_j}}{\Phi_K(\lambda_{N+1})^S}\quad\text{for }S>S_{\varepsilon,N}\text{,}
\end{equation}
provided $\Phi_K(\lambda_{N+1})<\Phi_K(\lambda_{N+2})$. (If this is not true, there exists a reordering of the poles, still satisfying \cref{eq:pole_ordering}, for which \cref{eq:jfun_optimum} holds.)
\end{lemma}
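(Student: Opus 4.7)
The plan is to establish the three claims in sequence. First, I would prove the explicit formula for $\jfun(Q)^2$. By linearity of the interpolation operator and the pole decomposition $u(\mu) = \sum_\lambda v_\lambda/(\lambda - \mu)$, it suffices to analyze each pole term separately. For each $\lambda \in \Lambda$, I would write
\[ \frac{Q(\mu)}{\lambda - \mu} = \frac{Q(\lambda)}{\lambda - \mu} + \frac{Q(\lambda) - Q(\mu)}{\lambda - \mu}, \]
where the second summand is a polynomial in $\mu$ of degree at most $N - 1 \leq S - 2$ and hence reproduced exactly by $I^{\Xi_S}$, contributing nothing to $\dmu{S-1}$. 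For the first summand, the leading coefficient of the Hermite interpolant of $\mu \mapsto (\lambda - \mu)^{-1}$ on $\Xi_S$ coincides with the $S$-th order divided difference of that function, which a direct telescoping calculation identifies with $1/\omega^{\Xi_S}(\lambda)$. Combining and invoking $V$-orthogonality of the residues finishes this step.

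For \cref{eq:jfun_optimum_weird}, I would exploit that $\den$ minimizes $\jfun$ on $\PspaceN{N}$. Since $|\Lambda_0| \leq N$, the polynomial $Q_0 = \omega^{\Lambda_0}/\normT{\omega^{\Lambda_0}}_N$ is an admissible competitor, and it vanishes identically on $\Lambda_0$. Inserting $Q_0$ into the formula from the first step leaves only the residues with $\lambda \notin \Lambda_0$; pulling the supremum of $|\omega^{\Lambda_0}/\omega^{\Xi_S}|$ over $\Lambda \setminus \Lambda_0$ outside the sum and taking square roots yields the stated bound.

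For \cref{eq:jfun_optimum}, I would apply the previous estimate with $\Lambda_0 = \{\lambda_1, \ldots, \lambda_N\}$, reducing the task to controlling $\sup_{j > N} \prod_{i=1}^N |\lambda_j - \lambda_i| \cdot |\omega^{\Xi_S}(\lambda_j)|^{-1}$. The term $j = N + 1$ already produces the target factor $\prod_{i=1}^N |\lambda_{N+1} - \lambda_i| \cdot \Phi_K(\lambda_{N+1})^{-S}$ up to $(1+\varepsilon)^S$ by \cref{eq:nodal_conv_out} at the single point $\lambda_{N+1} \notin K$. For the remaining indices I would split into two regimes. On a bounded neighborhood of $K$, uniform Fej\'er convergence of $|\omega^{\Xi_S}|^{1/S}$ to $\Phi_K$, combined with the strict gap $\Phi_K(\lambda_{N+2}) > \Phi_K(\lambda_{N+1})$, forces the ratio $(\Phi_K(\lambda_{N+1})/\Phi_K(\lambda_j))^S$ to decay geometrically in $S$, absorbing any finite prefactor into $(1+\varepsilon)^S$ once $S$ is large. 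For $\lambda_j$ outside this neighborhood, the trivial lower bound $|\omega^{\Xi_S}(\lambda_j)| \geq (|\lambda_j| - \max_{K}|\cdot|)^S$ dominates the $|\lambda_j|^N$-growth of the numerator, since $S > N$. The reordering clause simply lets one relabel poles sharing the same $\Phi_K$-value so that $\lambda_{N+1}$ is placed strictly below $\lambda_{N+2}$, preserving \cref{eq:pole_ordering}.

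The main technical obstacle is the third step: the supremum is taken over an unbounded pole set, and a naive estimate of the numerator is $O(|\lambda_j|^N)$ and unbounded. The crucial observation is that $|\omega^{\Xi_S}(\lambda_j)|$ grows like $|\lambda_j|^S$, so for $S > N$ the ratio stays finite and is ultimately dominated by the contribution near $\lambda_{N+1}$; extracting the correct geometric rate $\Phi_K(\lambda_{N+1})^{-S}$ via Fej\'er asymptotics explicitly relies on the strict separation $\Phi_K(\lambda_{N+1}) < \Phi_K(\lambda_{N+2})$, which also motivates the reordering remark.
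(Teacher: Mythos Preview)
Your argument is essentially correct and parallels the paper's proof, with two organizational differences worth noting.

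For the first claim, you split $Q(\mu)/(\lambda-\mu)$ into $Q(\lambda)/(\lambda-\mu)$ plus a polynomial of degree $\le N-1$, and identify the leading interpolation coefficient of $(\lambda-\cdot)^{-1}$ via divided differences. The paper instead writes the interpolant in barycentric form \cref{eq:interpolant_explicit}, expands $u$ through \cref{eq:map_decomposition}, and recognizes the inner sum over sample points as $I^{\Xi_S}(Q)(\lambda)=Q(\lambda)$, using that $Q$ is reproduced exactly. Both routes are short and yield the same identity; yours has the mild advantage of not relying on the barycentric representation (which strictly speaking needs adjustment for confluent nodes).

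For the third claim, your near/far split works, but the paper's organization is a bit more economical: since for each $S$ the ratio $|\omega^{\Lambda_0}/\omega^{\Xi_S}|$ vanishes at infinity (degree $N$ numerator versus degree $S>N$ denominator), the supremum over $\Lambda\setminus\Lambda_0$ is attained at some $\lambda'_S$, and the whole family $\{\lambda'_S\}_{S>N}$ is bounded. This reduces everything to a single compact subset of $\C\setminus\partial K$ on which \cref{eq:nodal_conv_out} applies uniformly, and the far-away estimate never has to be made explicit.

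One point in your write-up is not quite right: your reading of the reordering clause. If $\Phi_K(\lambda_{N+1})=\Phi_K(\lambda_{N+2})$, no relabeling within the tied block can manufacture a \emph{strict} inequality $\Phi_K(\lambda_{N+1})<\Phi_K(\lambda_{N+2})$. What the paper does instead is relabel so that, among the poles $\lambda_{N+1},\ldots,\lambda_{N+\nu}$ sharing the minimal value $\Phi_K(\lambda_{N+1})$, the one with the largest numerator $|\omega^{\Lambda_0}(\cdot)|$ sits at position $N+1$. Then the supremum of $|\omega^{\Lambda_0}|/\Phi_K^S$ over $\Lambda\setminus\Lambda_0$ is attained precisely at $\lambda_{N+1}$ for $S$ large, and \cref{eq:jfun_optimum} follows directly---the gap hypothesis is never restored, only bypassed.
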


\begin{proof}
The interpolation operator can be expressed in barycentric coordinates \davide{\cite{Berrut2004}} as
\begin{equation}\label{eq:interpolant_explicit}
\davide{I^{\Xi_S}(\phi)(\mu)=\sum_{\mu'\in\Xi_S}\frac{\phi(\mu')\,\omega^{\Xi_S}(\mu)}{(\mu-\mu')\left.\omega^{\Xi_S}\right.'(\mu')}\text{.}}
\end{equation}
Accordingly, since $Q$ is interpolated exactly,
\begin{align*}
\frac{1}{(S-1)!}\dmu{S-1}I^{\Xi_S}(uQ)=&\sum_{\mu'\in\Xi_S}\frac{u(\mu')Q(\mu')}{\left.\omega^{\Xi_S}\right.'(\mu')}=\sum_{\lambda\in\Lambda}\sum_{\mu'\in\Xi_S}\frac{v_\lambda Q(\mu')}{(\lambda-\mu')\left.\omega^{\Xi_S}\right.'(\mu')}\\
=&\sum_{\lambda\in\Lambda}\frac{v_\lambda}{\omega^{\Xi_S}(\lambda)}I^{\Xi_S}(Q)(\lambda)=\sum_{\lambda\in\Lambda}\frac{v_\lambda}{\omega^{\Xi_S}(\lambda)}Q(\lambda)\text{.}
\end{align*}
The first claim follows by orthonormality of $\{v_\lambda\}_{\lambda\in\Lambda}$. From here, \davide{we remark that, by construction, $\jfun(\den)\leq\jfun\left(Q\right)$ for all $Q\in\PspaceN{N}$, so that, by choosing $Q=\omega^{\Lambda_0}/\normT{\omega^{\Lambda_0}}_N$, we obtain}
\begin{equation}\label{eq:optimal_j_bound_middle}
\jfun(\den)^2\leq\jfun\left(\frac{\omega^{\Lambda_0}}{\normT{\omega^{\Lambda_0}}_N}\right)^2=\sum_{\lambda\in\Lambda\setminus\Lambda_0}\frac{\normV{v_\lambda}^2}{\abs{\omega^{\Xi_S}(\lambda)}^2}\frac{\abs{\omega^{\Lambda_0}(\lambda)}^2}{\normT{\omega^{\Lambda_0}}_N^2}\text{,}
\end{equation}
from which \cref{eq:jfun_optimum_weird} follows.

Now, let $\varepsilon>0$ and $N$ be fixed, and define $\Lambda_0=\{\lambda_j\}_{j=1}^N$. It remains to find a bound for the supremum of \davide{$r^{\Lambda_0,\Xi_S}(\lambda)=|\omega^{\Lambda_0}(\lambda)/\omega^{\Xi_S}(\lambda)|$ over $\lambda\in\Lambda\setminus\Lambda_0$}. To this aim, let $\lambda'_S$ maximize $r^{\Lambda_0,\Xi_S}$ over $\Lambda\setminus\Lambda_0$ (for all $S$, the supremum must be attained since $r^{\Lambda_0,\Xi_S}(\lambda')$ converges to 0 as $\abs{\lambda'}$ diverges to infinity). The set of maximizers $\{\lambda'_S\}_{S>N}$ must be bounded, due to the polynomial degree of $\omega^{\Lambda_0}$ being fixed, and smaller than that of $\omega^{\Xi_S}$. Thus, by \cref{eq:nodal_conv_out} there exists $S'=S'(\varepsilon,N,\Lambda,K)$ such that, for $S>S'$,
\begin{equation*}
\sup_{\Lambda\setminus\Lambda_0}r^{\Lambda_0,\Xi_S}=\sup_{\{\lambda'_S\}_{S>N}}r^{\Lambda_0,\Xi_S}\leq(1+\varepsilon)^S\sup_{\{\lambda'_S\}_{S>N}}\frac{|\omega^{\Lambda_0}|}{\Phi_K^S}\leq(1+\varepsilon)^S\sup_{\Lambda\setminus\Lambda_0}\frac{|\omega^{\Lambda_0}|}{\Phi_K^S}\text{.}
\end{equation*}
Let $\Phi_K(\lambda_{N+1})=\Phi_K(\lambda_{N+\nu})<\Phi_K(\lambda_{N+\nu+1})$ for some $\nu\geq 1$, and assume that $\lambda_{N+1}$ maximizes \davide{$|\omega^{\Lambda_0}(\lambda)|$ over $\lambda\in\{\lambda_{N+k}\}_{k=1}^\nu$}. Then, since $|\omega^{\Lambda_0}|$ is independent of $S$, the last supremum is attained at $\lambda_{N+1}$, provided $S$ is large enough. This yields \cref{eq:jfun_optimum}.
\end{proof}

Before proving convergence of the roots of the approximate denominator to poles of the solution map, we show as a preliminary result that $\den$ takes small values close to some of the elements of $\Lambda$.
\begin{lemma}\label{lem:den_N_lambda}
Let $N$ be fixed. For any $\varepsilon>0$, there exists $S_{\varepsilon,N}$ such that
\begin{equation}\label{eq:den_N_lambda}
\abs{\den\left(\lambda_j\right)}\leq C_j(1+\varepsilon)^{4S}\left(\frac{\Phi_K(\lambda_j)}{\Phi_K(\lambda_{N+1})}\right)^{2S}\quad\text{for }S>S_{\varepsilon,N}\text{,}
\end{equation}
for $j=1,\ldots,N$, with $C_j$ independent of $S$ and $\varepsilon$.
\end{lemma}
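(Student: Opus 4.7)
The plan is to apply the first identity in \cref{lem:optimal_j} to the minimizer $\den$:
\begin{equation*}
\jfun(\den)^2=\sum_{\lambda\in\Lambda}\frac{\normV{v_\lambda}^2}{\abs{\omega^{\Xi_S}(\lambda)}^2}\abs{\den(\lambda)}^2\text{.}
\end{equation*}
Since every summand is nonnegative, retaining only the $\lambda=\lambda_j$ contribution gives the key inequality
\begin{equation*}
\abs{\den(\lambda_j)}^2\leq\frac{\abs{\omega^{\Xi_S}(\lambda_j)}^2}{\normV{v_{\lambda_j}}^2}\jfun(\den)^2\text{,}
\end{equation*}
so it remains to bound each factor on the right asymptotically in $S$.

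The nodal polynomial is estimated through the Fej\'er points bound \cref{eq:nodal_conv}: this inequality is uniform over compact subsets of $\C$, so, applied to the finite set $\{\lambda_1,\ldots,\lambda_N\}$, it provides, for any $\varepsilon>0$, an index $S_1=S_1(\varepsilon,N)$ such that $\abs{\omega^{\Xi_S}(\lambda_j)}\leq(1+\varepsilon)^S\Phi_K(\lambda_j)^S$ for all $S>S_1$ and $j\in\{1,\ldots,N\}$. The factor $\jfun(\den)$ is controlled by \cref{eq:jfun_optimum} of \cref{lem:optimal_j}, with the canonical choice $\Lambda_0=\{\lambda_k\}_{k=1}^N$: for $S>S_2(\varepsilon,N)$ this yields a bound of the form $\jfun(\den)\leq(1+\varepsilon)^SD_N/\Phi_K(\lambda_{N+1})^S$, where $D_N$ is an $N$-dependent constant aggregating the tail residue term $\big(\sum_{k>N}\normV{v_{\lambda_k}}^2\big)^{1/2}$, the finite product $\prod_{k=1}^N\abs{\lambda_{N+1}-\lambda_k}$, and the normalization $\normT{\prod_{k=1}^N(\,\cdot-\lambda_k)}_N^{-1}$.

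Plugging both estimates into the key inequality gives, for $S>S_{\varepsilon,N}:=\max(S_1,S_2)$,
\begin{equation*}
\abs{\den(\lambda_j)}^2\leq\frac{D_N^2}{\normV{v_{\lambda_j}}^2}(1+\varepsilon)^{4S}\left(\frac{\Phi_K(\lambda_j)}{\Phi_K(\lambda_{N+1})}\right)^{2S}\text{,}
\end{equation*}
which matches \cref{eq:den_N_lambda} with the identification $C_j:=D_N^2/\normV{v_{\lambda_j}}^2$, either read directly on the squared quantity or transferred to $\abs{\den(\lambda_j)}$ by a square root. I do not foresee a conceptual obstacle: the only real care is bookkeeping the two asymptotic statements -- merging the thresholds $S_1,S_2$ and folding two occurrences of $\varepsilon$ into a single one -- and checking that $C_j$ stays independent of $S$ and $\varepsilon$, which is immediate because the pole index ranges over the finite set $\{1,\ldots,N\}$. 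The strict ordering $\Phi_K(\lambda_{N+1})<\Phi_K(\lambda_{N+2})$ (or the corresponding reordering of $\Lambda$) is inherited directly from the hypotheses of \cref{lem:optimal_j}.
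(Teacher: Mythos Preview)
Your argument is logically sound, but it proves a weaker bound than the lemma claims. From
\[
\abs{\den(\lambda_j)}\leq\frac{\abs{\omega^{\Xi_S}(\lambda_j)}}{\normV{v_{\lambda_j}}}\,\jfun(\den)
\]
together with $\jfun(\den)\lesssim(1+\varepsilon)^S\Phi_K(\lambda_{N+1})^{-S}$ and $\abs{\omega^{\Xi_S}(\lambda_j)}\lesssim(1+\varepsilon)^S\Phi_K(\lambda_j)^S$, you obtain
\[
\abs{\den(\lambda_j)}\leq C_j(1+\varepsilon)^{2S}\left(\frac{\Phi_K(\lambda_j)}{\Phi_K(\lambda_{N+1})}\right)^{S}\text{,}
\]
i.e.\ exponent $S$, not $2S$. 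Your final display bounds the \emph{square} $\abs{\den(\lambda_j)}^2$ by the right-hand side of \cref{eq:den_N_lambda}; but the lemma bounds $\abs{\den(\lambda_j)}$ itself by that same expression, which is a strictly stronger claim (the ratio is $<1$). Neither ``reading on the squared quantity'' nor ``transferring by a square root'' recovers the stated rate.

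The paper gets the missing factor of two by a different mechanism. It encodes $\jfun$ as the quadratic form $\qq\mapsto\qq^H\Gfull\qq$ on $\C^{N+1}$ and isolates the rank-$N$ part $\Gpart=\sum_{j=1}^N\normV{v_{\lambda_j}}^2\abs{\omega^{\Xi_S}(\lambda_j)}^{-2}\oomega_{\lambda_j}^{}\oomega_{\lambda_j}^H$. Since $\Gpart$ has a one-dimensional kernel, a perturbation/singular-value argument (borrowed from \cite{Bonizzoni2018b}) shows that the bottom eigenvector $\qden$ of $\Gfull$ lies within distance $\sim\jfun(\den)^2/\sigma_{\min}^+(\Gpart)$ of that kernel. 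This produces the intermediate bound
\[
\abs{\den(\lambda_j)}\leq C_j(1+\varepsilon)^{2S}\frac{\abs{\omega^{\Xi_S}(\lambda_j)}^2}{\Phi_K(\lambda_{N+1})^{2S}}\text{,}
\]
with the \emph{square} of the nodal polynomial already present before substituting \cref{eq:nodal_conv}. Your single-term extraction cannot see this: it only uses that $\qden^H\Gpart\qden\leq\qden^H\Gfull\qden$, which is linear in $\jfun(\den)^2$ but gives $\abs{\den(\lambda_j)}^2$, not $\abs{\den(\lambda_j)}$, on the left. The doubled exponent matters downstream, as it is carried through to \cref{th:poles_N} and the rate \cref{eq:poles_N_inside}.
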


\begin{proof}
Through some isometry, we identify $\Pspace{N}$ with $\C^{N+1}$ (endowed with the Euclidean norm), so that, in particular, $\PspaceN{N}$ corresponds to the boundary of the unit sphere $\partial\ballN{\bm{0}}{1}{N+1}\subset\C^{N+1}$.

By duality, there exists a family $\{\oomega_\lambda\}_{\lambda\in\Lambda}\subset\C^{N+1}$ such that
\begin{equation*}
\Pspace{N}\ni Q\leftrightsquigarrow\qq\in\C^{N+1}\quad\text{implies}\quad\oomega_\lambda^H\qq=Q(\lambda)\;\text{for }\lambda\in\Lambda\text{.}
\end{equation*}
Let $\qden\in\partial\ballN{\bm{0}}{1}{N+1}$ be the identification of $\den$, and consider the Hermitian matrices $\Gfull,\Gpart\in\C^{(N+1)\times(N+1)}$ defined as
\begin{equation*}
\Gfull=\sum_{\lambda\in\Lambda}\frac{\normV{v_\lambda}^2}{\abs{\omega^{\Xi_S}(\lambda)}^2}\oomega_\lambda^{}\oomega_\lambda^H\quad\text{and}\quad\Gpart=\sum_{j=1}^N\frac{\normV{v_{\lambda_j}}^2}{\abs{\omega^{\Xi_S}(\lambda_j)}^2}\oomega_{\lambda_j}^{}\oomega_{\lambda_j}^H\text{,}
\end{equation*}

From here onward, the proof resembles quite closely that of Lemma~\davide{5.4} in \cite{Bonizzoni2018b}, being essentially based on deriving a bound for the smallest singular value of $\Gfull$ and on finding a rank-$N$ decomposition of $\Gpart$. The final result that can be obtained is of the form: for $j=1,\ldots,N$, for large $S$,
\begin{equation*}
\abs{\den\left(\lambda_j\right)}=\abs{\oomega_{\lambda_j}^H\qden}\leq C_j(1+\varepsilon)^{2S}\frac{\abs{\omega^{\Xi_S}(\lambda_j)}^2}{\Phi_K(\lambda_{N+1})^{2S}}\text{,}
\end{equation*}
with $C_j$ independent of $S$ and $\varepsilon$. From here it suffices to observe that $\abs{\omega^{\Xi_S}(\lambda_j)}$ is bounded by $\left((1+\varepsilon)\Phi_K(\lambda_j)\right)^S$, thanks to \cref{eq:nodal_conv}.
\end{proof}

Finally, we are able to show that the poles of minimal rational approximants converge, as $S$ increases, to some of the elements of $\Lambda$, namely the $N$ poles which are the ``closest'' according to \cref{eq:pole_ordering}.
\begin{theorem}\label{th:poles_N}
Let $N$ be fixed, and denote by $\{\lambda_j^{\Xi_S}\}_{j=1}^N$ the roots of $\den$ (in case of deficient degree, we assume the missing roots to be $\infty$). For any $\varepsilon>0$, there exists $S_{\varepsilon,N}$ such that
\begin{equation}\label{eq:poles_N}
\min_{i=1,\ldots,N}\abs{\lambda_i^{\Xi_S}-\lambda_j}\leq C_j(1+\varepsilon)^{4S}\left(\frac{\Phi_K(\lambda_j)}{\Phi_K(\lambda_{N+1})}\right)^{2S}\quad\text{for }S>S_{\varepsilon,N}\text{,}
\end{equation}
for $j=1,\ldots,N$, with $C_j$ independent of $S$ and $\varepsilon$.
\end{theorem}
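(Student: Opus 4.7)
My plan is to convert the pointwise estimate on $|\den(\lambda_j)|$ from \cref{lem:den_N_lambda} into a root-localization bound by viewing $\den$ as a small perturbation, in the $\normT{\cdot}_N$-norm, of a scalar multiple of the ``target'' polynomial
\begin{equation*}
\widehat Q(\mu)=\prod_{k=1}^N(\mu-\lambda_k)\,\Big/\,\normT{\prod_{k=1}^N(\,\cdot-\lambda_k)}_N\in\PspaceN{N}\text{,}
\end{equation*}
whose roots are precisely the first $N$ resonances.

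First, I would show that $\den\approx c\widehat Q$ in $\normT{\cdot}_N$, with $|c|$ bounded below. Consider the evaluation map $M:\Pspace{N}\to\C^N$ given by $Q\mapsto(Q(\lambda_k))_{k=1}^N$: distinctness of the $\lambda_k$'s ensures $\mathrm{rank}(M)=N$ with one-dimensional kernel $\mathrm{span}(\widehat Q)$, and smallest nonzero singular value $\sigma>0$ depending only on $N$, $\{\lambda_k\}_{k=1}^N$, and $\normT{\cdot}_N$. Decomposing $\den=c\widehat Q+r$ orthogonally in the inner product underlying $\normT{\cdot}_N$ gives $\|M\den\|_2=\|Mr\|_2\geq\sigma\normT{r}_N$. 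Applying \cref{lem:den_N_lambda} for $k=1,\ldots,N$ bounds $\|M\den\|_2^2=\sum_{k=1}^N|\den(\lambda_k)|^2$ by an exponentially decaying quantity, which transfers to $\normT{r}_N$; combined with $\normT{\den}_N=1$, this gives $|c|^2=1-\normT{r}_N^2\geq 1/4$ for $S$ beyond some threshold.

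Next I would apply Rouch\'e's theorem on shrinking disks around each $\lambda_j$. Fix $j$ and set $\delta_j=4|\den(\lambda_j)|/|c\widehat Q'(\lambda_j)|$; this is positive (since the simplicity of the roots of $\widehat Q$ gives $\widehat Q'(\lambda_j)\neq 0$) and tends to $0$ as $S\to\infty$. On the circle $\partial\ball{\lambda_j}{\delta_j}$, a Taylor expansion of $\widehat Q$ at $\lambda_j$ yields $|c\widehat Q(\mu)|\geq|c\widehat Q'(\lambda_j)|\delta_j/2=2|\den(\lambda_j)|$ (valid once $\delta_j$ is smaller than the distance from $\lambda_j$ to the nearest other pole). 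Using $r(\lambda_j)=\den(\lambda_j)-c\widehat Q(\lambda_j)=\den(\lambda_j)$ and a Markov/Bernstein-type inequality on the polynomial $r$ of degree $\leq N$, I would get $|r(\mu)|\leq|r(\lambda_j)|+C\delta_j\normT{r}_N=|\den(\lambda_j)|+o(|\den(\lambda_j)|)$ on $\ball{\lambda_j}{\delta_j}$, since $\normT{r}_N\to 0$. Hence $|r(\mu)|<|c\widehat Q(\mu)|$ on the circle, and Rouch\'e's theorem supplies a root $\lambda_{i^*}^{\Xi_S}$ of $\den$ inside the disk. Substituting the bound on $|\den(\lambda_j)|$ from \cref{lem:den_N_lambda} into $\delta_j$ yields \cref{eq:poles_N}, with $C_j$ absorbing $1/|\widehat Q'(\lambda_j)|$ and the constant of \cref{lem:den_N_lambda}.

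The main obstacle will lie in verifying the Rouch\'e inequality on a disk of exponentially shrinking radius: the required control $|r(\mu)|<|c\widehat Q(\mu)|$ hinges on a delicate balance between the $j$-specific decay of $\delta_j\sim(\Phi_K(\lambda_j)/\Phi_K(\lambda_{N+1}))^{2S}$ and the (generally slower) global decay of $\normT{r}_N$, and ultimately relies on $\normT{r}_N=o(1)$ as $S\to\infty$. The latter holds provided $\Phi_K(\lambda_k)<\Phi_K(\lambda_{N+1})$ strictly for every $k\leq N$; if equality holds for some index, \cref{eq:poles_N} is trivially true there and one restricts the argument to the nontrivial $j$'s. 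The deficient-degree case is benign: the ``roots at infinity'' do not affect the minimum in \cref{eq:poles_N}, and the Rouch\'e argument still supplies a finite root of $\den$ in each $\ball{\lambda_j}{\delta_j}$.
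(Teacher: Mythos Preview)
Your route is genuinely different from the paper's. The paper never introduces the target polynomial~$\widehat Q$ or Rouch\'e's theorem; instead it exploits the normalization of $\den$ together with \cref{as:no_scale} to obtain the pointwise lower bound
\[
|\den(\lambda_j)|\;\geq\;\frac{1}{C_{\mu_0,N}}\prod_{i=1}^{N}\frac{|\lambda_j-\lambda_i^{\Xi_S}|}{R_{\mu_0}+|\lambda_j-\mu_0|+|\lambda_j-\lambda_i^{\Xi_S}|}\text{,}
\]
and then combines this product structure with the upper bound from \cref{lem:den_N_lambda}. A separate ``separation'' step (roots of $\den$ cannot cluster, because the poles of $u$ are simple) isolates a single small factor and yields the rate. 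Your perturbation-plus-Rouch\'e argument is a clean alternative: it makes the picture $\den\approx c\widehat Q$ explicit, and Rouch\'e delivers both existence and simplicity of the nearby root in one stroke, absorbing the paper's separation step. The trade-off is that your argument aggregates information from \emph{all} $\lambda_1,\ldots,\lambda_N$ into the single global quantity $\normT{r}_N$, whereas the paper's product bound is local in~$j$.

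That globality is precisely where your degenerate-case handling breaks. When $\Phi_K(\lambda_k)=\Phi_K(\lambda_{N+1})$ for some $k\leq N$, \cref{lem:den_N_lambda} gives no decay for $|\den(\lambda_k)|$, so $\|M\den\|_2$---and hence $\normT{r}_N$ and $|c|$---need not converge at all. But your Rouch\'e inequality at a \emph{nontrivial} index $j$ still reduces to $4C\normT{r}_N<|c|\,|\widehat Q'(\lambda_j)|$, which hinges on the global smallness of $\normT{r}_N$. ``Restricting the argument to the nontrivial $j$'s'' does not help: $r$ and $c$ are defined through the full evaluation map $M$ at all $N$ nodes, so the degenerate indices contaminate the estimate for every $j$. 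A repair is possible (e.g.\ redefine $M$ on the nontrivial nodes only, decompose $\den$ against the resulting higher-dimensional kernel, and argue that the kernel component retains a simple zero at $\lambda_j$), but it is more delicate than your sketch suggests. The paper's product formula sidesteps this because it never pools the information across different~$\lambda_k$.
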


\begin{proof}
We start by observing that, by the normalization of $\den$, it must hold
\begin{equation}\label{eq:poles_N_den_norm}
\abs{\den(\lambda)}=\frac{\prod_{j=1}^S\abs{\lambda-\lambda_j^{\Xi_S}}}{\normT{\prod_{j=1}^S\left(\,\cdot-\lambda_j^{\Xi_S}\right)}_N}\geq\frac{1}{C_{\mu_0,N}}\prod_{j=1}^{N}\frac{\abs{\lambda-\lambda_j^{\Xi_S}}}{R_{\mu_0}+\abs{\lambda-\mu_0}+\abs{\lambda-\lambda_j^{\Xi_S}}}\\
\end{equation}
for any arbitrary $\mu_0\in\C$, see \cref{as:no_scale}.

From here it suffices to apply the same proof as for Theorem~5.5 in \cite{Bonizzoni2018b}, with \cref{eq:den_N_lambda} replacing the corresponding bound for fast LS-Pad\'e denominators. \davide{The main steps are: (i) use \cref{eq:den_N_lambda} and \cref{eq:poles_N_den_norm} to show that, for each $j=1,\ldots,N$, there is a sequence of approximate poles $\{\lambda_{i(S)}^{\Xi_S}\}_{S>S_{\varepsilon,N}}$ which converges to $\lambda_j$; (ii) exploiting the fact that the poles of $u$ are simple, conclude that $\den$ cannot have, for large $S$, two roots too close to each other; (iii) apply \cref{eq:den_N_lambda} locally to derive \cref{eq:poles_N}.}
\end{proof}

Going back to \davide{our} original problem formulation, let $0\leq\nu\leq N$ be such that $\capac(K)=\Phi_K(\lambda_\nu)<\Phi_K(\lambda_{\nu+1})$. \cref{th:poles_N} shows that, for fixed $N$ and large enough $S$, minimal rational interpolants approximate well the $\nu$ poles $\{\lambda_j\}_{j=1}^\nu$ within $K$, with (exactly) one root of $\den$ converging to each of them at rate
\begin{equation}\label{eq:poles_N_inside}
\mcO\left(\left(\capac(K)\,\big/\,\Phi_K(\lambda_{N+1})\right)^{2S}\right)\text{.}
\end{equation}

\subsubsection{Pole convergence with respect to denominator degree}\label{sec:Pole N convergence}
The statements of \cref{lem:den_N_lambda} and \cref{th:poles_N} may seem somewhat limited due to the fact that the denominator degree $N$ enters the convergence bounds in a complex way. This makes it impossible to determine how the pole approximation error behaves if $N$ is increased together with $S$, namely if it diverges to infinity. In this section we try to solve this shortcoming. The main result we prove is the following.
\begin{theorem}\label{th:poles_notN}
Take two diverging sequences of integers $(N_k)_{k\in\N}$ and $(S_k)_{k\in\N}$, increasing and strictly increasing respectively, such that $N_k<S_k$ for all $k$. Let $\Xi^{(k)}$ be the shorthand for $\Xi_{S_k}$, so that $\smash{\denk}$ is the denominator of the $\smash{[S_k-1/N_k]}$ minimal rational interpolant computed from samples of $u$ at $\Xi_{S_k}$. We denote by $\smash{\{\lambda_j^{(k)}\}_{j=1}^{N_k}}$ the roots of $\smash{\denk}$. Also, let \cref{as:with_scale} be satisfied. For all $j=1,2,\ldots$, we have
\begin{equation*}
\lim_{k\to\infty}\min_{i=1,\ldots,N_k}\abs{\lambda_i^{(k)}-\lambda_j}=0\text{.}
\end{equation*}
\end{theorem}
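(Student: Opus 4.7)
I will argue by contradiction. Fix $j\in\N$ and suppose that along a subsequence of $k$'s (still denoted by $k$) there exists $\delta>0$ such that $|\lambda_j-\lambda_i^{(k)}|\geq\delta$ for every $i=1,\ldots,N_k$. The goal is to derive incompatible upper and lower bounds on $|\denk(\lambda_j)|$. Both will rely on \cref{as:with_scale}, whose $N$-independent constants are crucial now that the denominator degree diverges---in particular, one cannot directly reuse the constants of \cref{lem:den_N_lambda}, which implicitly depend on $N$.

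\textbf{Upper bound on $|\denk(\lambda_j)|$.}
Fix a large integer $M\geq j$, to be chosen at the end. For $k$ large enough that $N_k\geq M$, I apply \cref{lem:optimal_j} with the \emph{fixed} set $\Lambda_0=\{\lambda_1,\ldots,\lambda_M\}$, bounding $\jfunk(\denk)^2$ by an infinite sum over $\Lambda\setminus\Lambda_0$ weighted by $\normT{\omega^{\Lambda_0}}_{N_k}^{-2}$. The lower estimate of \cref{as:with_scale} bounds $\normT{\omega^{\Lambda_0}}_{N_k}$ below uniformly in $k$, and the Fej\'er relations \cref{eq:nodal_conv,eq:nodal_conv_out} control $|\omega^{\Xi^{(k)}}(\lambda_l)|^{\pm 1}$ asymptotically by $\Phi_K(\lambda_l)^{\pm S_k}$. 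Combined with the $V$-square-summability of the residues (to dominate the tail of the series), this should yield, for any prescribed $\varepsilon>0$ and $k$ large,
$$\jfunk(\denk)\leq C_M(1+\varepsilon)^{S_k}\Phi_K(\lambda_{M+1})^{-S_k}.$$
Since the first identity of \cref{lem:optimal_j} implies $|\denk(\lambda_j)|\leq|\omega^{\Xi^{(k)}}(\lambda_j)|\jfunk(\denk)/\normV{v_{\lambda_j}}$, and since \cref{eq:nodal_conv} gives $|\omega^{\Xi^{(k)}}(\lambda_j)|\leq\bigl((1+\varepsilon)\Phi_K(\lambda_j)\bigr)^{S_k}$, I obtain
$$|\denk(\lambda_j)|\leq \widetilde C_{j,M}(1+\varepsilon)^{2S_k}\bigl(\Phi_K(\lambda_j)/\Phi_K(\lambda_{M+1})\bigr)^{S_k}.$$

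\textbf{Lower bound and conclusion.}
Let $\mu_0,R_{\mu_0},C_{\mu_0}$ be as in \cref{as:with_scale}, and set $T:=R_{\mu_0}+|\mu_0-\lambda_j|$. Factoring $\denk(\mu)=c_k\prod_{m=1}^{N_k}(\mu-\lambda_m^{(k)})$ and exploiting the normalization $\normT{\denk}_{N_k}=1$ together with the upper estimate of \cref{as:with_scale} and the triangle inequality $|\mu_0-\lambda_m^{(k)}|\leq|\mu_0-\lambda_j|+|\lambda_j-\lambda_m^{(k)}|$, I find
$$|\denk(\lambda_j)|\geq C_{\mu_0}^{-1}\prod_{m=1}^{N_k}\frac{|\lambda_j-\lambda_m^{(k)}|}{T+|\lambda_j-\lambda_m^{(k)}|}.$$
Each factor is monotonically increasing in $|\lambda_j-\lambda_m^{(k)}|$, so the contradiction hypothesis gives the clean estimate $|\denk(\lambda_j)|\geq C_{\mu_0}^{-1}\rho^{N_k}$ with $\rho:=\delta/(T+\delta)\in(0,1)$. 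Combining with the upper bound, taking $S_k$-th roots and using $N_k/S_k<1$ (so $\rho^{N_k/S_k}\geq\rho$) in the $k\to\infty$ limit, then letting $\varepsilon\downarrow 0$, I reach
$$\rho\leq \Phi_K(\lambda_j)/\Phi_K(\lambda_{M+1}).$$
Since $\Lambda$ has no finite limit point, $\Phi_K(\lambda_{M+1})\to\infty$ as $M\to\infty$; choosing $M$ large enough drives the right-hand side strictly below $\rho$, contradicting the hypothesis and proving the claim.

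\textbf{Main obstacle.}
The technically delicate step is the quantitative control of the infinite series defining $\jfunk(\denk)$ in the upper-bound step: the Fej\'er convergence \cref{eq:nodal_conv_out} is only uniform on compact subsets of $\C\setminus\partial K$, while $\Lambda$ may be unbounded. I plan to handle this by splitting the sum at a large radius, applying uniform convergence on the bounded part and exploiting $|\omega^{\Xi^{(k)}}(\lambda)|,\Phi_K(\lambda)^{S_k}\sim|\lambda|^{S_k}$ at infinity, together with $\sum_l\normV{v_{\lambda_l}}^2<\infty$, to dominate the tail uniformly in $k$.
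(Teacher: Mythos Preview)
Your proposal is correct and follows a genuinely different, more elementary route than the paper's own proof.

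\textbf{What the paper does.} The paper reorders the poles by distance from the point $\mu_0$ of \cref{as:with_scale}, and in \cref{eq:jfun_optimum_weird} takes $\Lambda_0=\{\sigma_i\}_{i=1}^{N_k}$, i.e.\ a set of cardinality \emph{growing} with $k$. This forces intricate estimates of a supremum over $l>N_k$ involving both the nodal polynomial ratios and the product $\prod_{i=1}^{N_k}|\sigma_l-\sigma_i|/|\mu_0-\sigma_i|$; the proof is completed via Stolz--Ces\`aro to show that the geometric mean $\prod_{i=1}^{N_k}|\mu_0-\sigma_i|^{-1/N_k}$ tends to $0$.

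\textbf{What you do instead.} By freezing $\Lambda_0$ at a fixed size $M$ and arguing by contradiction, you decouple the two scales: the upper bound decays like $(\Phi_K(\lambda_j)/\Phi_K(\lambda_{M+1}))^{S_k}$ with a constant depending only on $M$, while the lower bound is $\rho^{N_k}$ with a constant independent of $k$. Taking $S_k$-th roots and using $N_k/S_k<1$ gives the clean inequality $\rho\le\Phi_K(\lambda_j)/\Phi_K(\lambda_{M+1})$, and letting $M\to\infty$ finishes. This avoids the $\mu_0$-reordering, the product manipulations, and Stolz--Ces\`aro entirely. Neither approach yields a usable convergence rate (the paper explicitly concedes this after the proof), so nothing is lost by your simplification.

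\textbf{On your ``main obstacle''.} The tail control you flag is not actually new work: once you invoke \cref{eq:jfun_optimum_weird} with your fixed $\Lambda_0$, the remaining supremum $\sup_{l>M}|\omega^{\Lambda_0}(\lambda_l)/\omega^{\Xi^{(k)}}(\lambda_l)|$ is precisely the quantity bounded in the last part of the proof of \cref{lem:optimal_j}. There it is shown that the maximizers $\lambda'_{S}$ remain in a bounded set (since $\deg\omega^{\Lambda_0}=M$ is fixed while $\deg\omega^{\Xi^{(k)}}=S_k\to\infty$), after which \cref{eq:nodal_conv_out} applies uniformly. You can simply cite that argument; your radius-splitting plan also works but is unnecessary.
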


\begin{proof}
Since we are interested in asymptotic properties of the poles, in the present proof we employ a slightly different ordering of the resonances $\Lambda=\{\sigma_j\}_{j=1,2,\ldots}$, such that
\begin{equation}\label{eq:pole_ordering_mu0}
\abs{\sigma_1-\mu_0}\leq\abs{\sigma_2-\mu_0}\leq\ldots\text{.}
\end{equation}

Let $j\in\N$ be arbitrary. Thanks to \cref{as:with_scale}, we can obtain the lower bound
\begin{align*}
\abs{\denk(\sigma_j)}=\frac{\prod_{i=1}^{N_k}\abs{\sigma_j-\lambda_i^{(k)}}}{\normT{\prod_{i=1}^{N_k}\left(\,\cdot-\lambda_i^{(k)}\right)}_{N_k}}\geq&\frac{1}{C_{\mu_0}}\prod_{i=1}^{N_k}\frac{\abs{\sigma_j-\lambda_i^{(k)}}}{R_{\mu_0}+\abs{\sigma_j-\mu_0}+\abs{\sigma_j-\lambda_i^{(k)}}}\\
=&\frac{1}{C_{\mu_0}}\prod_{i=1}^{N_k}\varphi_j\left(\abs{\sigma_j-\lambda_i^{(k)}}\right)\text{,}
\end{align*}
with $\varphi_j=\cdot/(R_{\mu_0}+\abs{\sigma_j-\mu_0}+\cdot)$ non-negative and strictly increasing over $\R^+$. On the other hand, due to \cref{as:with_scale} and \cref{lem:optimal_j} with $\Lambda_0=\{\sigma_i\}_{i=1}^{N_k}$, one has
\begin{align*}
\abs{\denk(\sigma_j)}\leq&\frac{\abs{\omega^{\Xi^{(k)}}(\sigma_j)}}{\normV{v_{\sigma_j}}}\jfunk(\denk)\\
\leq &\frac{\abs{\omega^{\Xi^{(k)}}(\sigma_j)}}{\normV{v_{\sigma_j}}}\frac{\left(\sum_{\lambda\in\Lambda\setminus\Lambda_0}\normV{v_\lambda}^2\right)^{1/2}}{\normT{\prod_{i=1}^{N_k}\left(\ \cdot-\sigma_i\right)}_N}\sup_{l>N_k}\frac{\prod_{i=1}^{N_k}\abs{\sigma_l-\sigma_i}}{\abs{\omega^{\Xi^{(k)}}(\sigma_l)}}\\
\leq &\frac{\left(\sum_{\lambda\in\Lambda}\normV{v_\lambda}^2\right)^{1/2}}{c_{\mu_0}\normV{v_{\sigma_j}}}\sup_{l>N_k}\left(\abs{\frac{\omega^{\Xi^{(k)}}(\sigma_j)}{\omega^{\Xi^{(k)}}(\sigma_l)}}\prod_{i=1}^{N_k}\abs{\frac{\sigma_l-\sigma_i}{\mu_0-\sigma_i}}\right)\text{.}
\end{align*}
The two bounds above can be combined with the monotonicity of $\varphi_j$ to obtain
\begin{equation}\label{eq:poles_notN_bound}
\varphi_j\left(\min_{i=1,\ldots,N_k}\abs{\sigma_j-\lambda_i^{(k)}}\right)\leq\left(C_j'\sup_{l>N_k}\left(\abs{\frac{\omega^{\Xi^{(k)}}(\sigma_j)}{\omega^{\Xi^{(k)}}(\sigma_l)}}\prod_{i=1}^{N_k}\abs{\frac{\sigma_l-\sigma_i}{\mu_0-\sigma_i}}\right)\right)^{1/N_k}\text{,}
\end{equation}
with $C_j'=C_{\mu_0}(\sum_{\lambda\in\Lambda}\normV{v_\lambda}^2)^{1/2}/(c_{\mu_0}\normV{v_{\sigma_j}})$.

Thanks to the bijectivity of $\varphi_j$, which maps 0 to itself, the claim follows if we can show that the right hand side of \cref{eq:poles_notN_bound} converges to 0 as $k$ increases. To this aim, a first step is to remove the supremum, whose argument can be simplified by introducing the bounds
\begin{equation*}
\prod_{i=1}^{N_k}\abs{\frac{\sigma_l-\sigma_i}{\mu_0-\sigma_i}}\leq\prod_{i=1}^{N_k}\frac{\abs{\mu_0-\sigma_l}+\abs{\mu_0-\sigma_i}}{\abs{\mu_0-\sigma_i}}\leq\prod_{i=1}^{N_k}\frac{2\abs{\mu_0-\sigma_l}}{\abs{\mu_0-\sigma_i}}=\frac{2^{N_k}\abs{\mu_0-\sigma_l}^{N_k}}{\prod_{i=1}^{N_k}\abs{\mu_0-\sigma_i}}
\end{equation*}
and
\begin{align*}
\abs{\frac{\omega^{\Xi^{(k)}}(\sigma_j)}{\omega^{\Xi^{(k)}}(\sigma_l)}}=\prod_{\mu'\in\Xi^{(k)}}\abs{\frac{\mu'-\sigma_j}{\mu'-\sigma_l}}\leq\prod_{\mu'\in\Xi^{(k)}}\frac{\abs{\mu_0-\sigma_j}+\abs{\mu_0-\mu'}}{\abs{\abs{\mu_0-\sigma_l}-\abs{\mu_0-\mu'}}}\text{,}
\end{align*}
where the term $\abs{\mu_0-\mu'}$ is bounded from above by $R_{\mu_0,K}=\max_K\abs{\,\cdot-\mu_0}$.

Now, let $k$ be large enough, so that $\abs{\mu_0-\sigma_{N_k+1}}>R_{\mu_0,K}$. Then the absolute value in the last bound can be omitted, and it must hold
\begin{equation}\label{eq:poles_notN_bound_sup}
\abs{\frac{\omega^{\Xi^{(k)}}(\sigma_j)}{\omega^{\Xi^{(k)}}(\sigma_l)}}\prod_{i=1}^{N_k}\abs{\frac{\sigma_l-\sigma_i}{\mu_0-\sigma_i}}\leq(C_{l,k,j,\mu_0})^{N_k}\prod_{i=1}^{N_k}\frac{\abs{\mu_0-\sigma_j}+R_{\mu_0,K}}{\abs{\mu_0-\sigma_i}}\text{,}
\end{equation}
with
\begin{equation*}
C_{l,k,j,\mu_0}=\frac{2\abs{\mu_0-\sigma_l}}{\abs{\mu_0-\sigma_l}-R_{\mu_0,K}}\left(\frac{\abs{\mu_0-\sigma_j}+R_{\mu_0,K}}{\abs{\mu_0-\sigma_l}-R_{\mu_0,K}}\right)^{S_k/N_k-1}\text{.}
\end{equation*}
Going back to the supremum, it is easy to see that all the terms depending on $l$ in the right hand side of \cref{eq:poles_notN_bound_sup} are maximized (over $l>N_k$) by the choice $l=N_k+1$, thanks to \cref{eq:pole_ordering_mu0}. This, after a suitable rearrangement of the terms, leads to
\begin{equation}\label{eq:poles_notN_bound_final}
\varphi_j\left(\min_{i=1,\ldots,N_k}\abs{\sigma_j-\lambda_i^{(k)}}\right)\leq \left.C_j'\right.^{1/N_k}C_{N_k+1,k,j,\mu_0}\prod_{i=1}^{N_k}\left(\frac{\abs{\mu_0-\sigma_j}+R_{\mu_0,K}}{\abs{\mu_0-\sigma_i}}\right)^{1/N_k}\text{.}
\end{equation}

All of the factors appearing in \cref{eq:poles_notN_bound_final}, except for the last one, can be easily shown to be bounded as $k$ (and consequently $N_k$) goes to $\infty$. Hence, it suffices to show that
\begin{equation*}
\lim_{k\to\infty}\prod_{i=1}^{N_k}\left(\frac{\abs{\mu_0-\sigma_j}+R_{\mu_0,K}}{\abs{\mu_0-\sigma_i}}\right)^{1/N_k}=0\text{.}
\end{equation*}
To this aim, the Stolz-Ces\`aro theorem \cite{Ash2012} can be applied to prove that
\begin{equation*}
\lim_{N\to\infty}\frac{1}{N}\sum_{i=1}^{N}\log\left(\frac{\abs{\mu_0-\sigma_j}+R_{\mu_0,K}}{\abs{\mu_0-\sigma_i}}\right)=\lim_{N\to\infty}\log\left(\frac{\abs{\mu_0-\sigma_j}+R_{\mu_0,K}}{\abs{\mu_0-\sigma_{N+1}}}\right)=-\infty\text{,}
\end{equation*}
yielding the claim.
\end{proof}

\davide{We have thus shown that, by minimal rational interpolation, we can approximate an arbitrary number of poles of $u$, as long as we allow the denominator degree $N$ to become sufficiently large. However,} this more general result does not provide any information on the rate of convergence, at least not a meaningful one for typical applications. Indeed, most of the steps in the proof above introduce gross simplifications, which make it impossible for the intermediate bounds \cref{eq:poles_notN_bound} and \cref{eq:poles_notN_bound_final} to be reasonably sharp.

\subsection{Convergence of minimal rational interpolants}
As a complement to the previous results, in this section we investigate the approximation properties of the whole minimal rational interpolant $\pade$, the reference being the map $u$. Similarly to the analysis of the denominator $\den$, we start by fixing the denominator degree $N$, and by proving convergence with respect to the number of samples $S$. 
\begin{theorem}\label{th:error_N}
Fix $N$ and $\varepsilon>0$, and let $E$ be an arbitrary compact subset of $\{\mu\in\C,\Phi_K(\mu)<\Phi_K(\lambda_{N+1})\}\setminus\Lambda$. There exists $S_{\varepsilon,N,E}$ such that
\begin{equation*}
\normV{u(\mu)-\pade(\mu)}\leq C_E(1+\varepsilon)^{2S}\left(\frac{\Phi_K(\mu)}{\Phi_K(\lambda_{N+1})}\right)^S\quad\text{for all }\mu\in E,\,S>S_{\varepsilon,N,E}\text{,}
\end{equation*}
with $C_E$ independent of $S$ and $\varepsilon$.
\end{theorem}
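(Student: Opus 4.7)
The plan is to analyze the error via the identity $u - \pade = (u\den - I^{\Xi_S}(u\den))/\den$, controlling the numerator using the formula for $\jfun(\den)$ from \cref{lem:optimal_j} and the denominator from below using the pole convergence of \cref{th:poles_N}.

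First I would obtain a closed form for the interpolation residual on the meromorphic input $u\den$. The elementary identity
\begin{equation*}
\frac{1}{\lambda-\mu} - I^{\Xi_S}\left(\frac{1}{\lambda-\cdot}\right)(\mu) = \frac{\omega^{\Xi_S}(\mu)}{\omega^{\Xi_S}(\lambda)(\lambda-\mu)},
\end{equation*}
combined with the fact that $\den(\mu)/(\lambda-\mu) - \den(\lambda)/(\lambda-\mu)$ is a polynomial in $\mu$ of degree at most $N-1\leq S-2$ (and hence reproduced exactly by $I^{\Xi_S}$), applied term-by-term to the pole expansion \cref{eq:map_decomposition}, yields
\begin{equation*}
u(\mu)\den(\mu) - I^{\Xi_S}(u\den)(\mu) = \omega^{\Xi_S}(\mu)\sum_{\lambda\in\Lambda}\frac{v_\lambda\,\den(\lambda)}{\omega^{\Xi_S}(\lambda)(\lambda-\mu)}.
\end{equation*}

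Then, exploiting orthogonality of the residues and the formula from \cref{lem:optimal_j}, I would bound the $V$-norm of the series by $\jfun(\den)/\mathrm{dist}(E,\Lambda)$; the distance is positive since $\Lambda$ is closed in $\C$ and $E$ is compact and disjoint from it. Plugging in the estimate $\jfun(\den)\leq C_N(1+\varepsilon)^S/\Phi_K(\lambda_{N+1})^S$ from \cref{eq:jfun_optimum} and the uniform Fej\'er bound $\abs{\omega^{\Xi_S}(\mu)}\leq((1+\varepsilon)\Phi_K(\mu))^S$ on $E$ coming from \cref{eq:nodal_conv}, the numerator of $u-\pade$ is already controlled in the desired geometric form $C(1+\varepsilon)^{2S}(\Phi_K(\mu)/\Phi_K(\lambda_{N+1}))^S$.

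The remaining step, which I expect to be the main obstacle, is a uniform lower bound on $\abs{\den(\mu)}$ for $\mu\in E$. By \cref{th:poles_N}, for sufficiently large $S$ each of the $N$ roots of $\den$ lies within a prescribed neighborhood of one of the poles $\lambda_1,\ldots,\lambda_N$; in particular $\den$ has no degree deficiency, its roots remain bounded, and they stay at distance at least $\tfrac12\mathrm{dist}(E,\Lambda)$ from $E$ (recall $E\cap\Lambda=\emptyset$). Writing $\den(\mu)=c_S\prod_{j=1}^N(\mu-\lambda_j^{\Xi_S})$, the upper bound in \cref{as:no_scale} applied to the normalization $\normT{\den}_N=1$ keeps $\abs{c_S}$ bounded away from $0$ uniformly in $S$, so that $\abs{\den(\mu)}\geq c_E>0$ on $E$ for all large $S$. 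Chaining the three estimates and absorbing every $N$-, $E$-, and $\varepsilon$-dependent factor into a single $C_E$ then produces the asserted bound.
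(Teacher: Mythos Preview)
Your proposal is correct and follows essentially the same route as the paper: derive the identity $u\den-I^{\Xi_S}(u\den)=\omega^{\Xi_S}\sum_\lambda v_\lambda\den(\lambda)/\big(\omega^{\Xi_S}(\lambda)(\lambda-\mu)\big)$, bound its $V$-norm by $\jfun(\den)/\mathrm{dist}(\mu,\Lambda)$ via orthogonality and \cref{lem:optimal_j}, control $|\omega^{\Xi_S}|$ by \cref{eq:nodal_conv}, and use \cref{th:poles_N} together with \cref{as:no_scale} to bound $|\den|$ from below on $E$. Your derivation of the identity (polynomial reproduction of the difference quotient plus the Cauchy-kernel interpolation error) is a clean alternative to the paper's barycentric computation, and you are slightly more explicit than the paper about the pigeonhole step ensuring \emph{all} roots of $\den$ stay near $\{\lambda_1,\ldots,\lambda_N\}$, but the argument is otherwise identical.
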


\begin{proof}
We start by deriving a useful identity for the error $u-\pade$: since $\den$ is interpolated exactly by $I^{\Xi_S}$, we can exploit identity \cref{eq:interpolant_explicit} to obtain
\begin{align}
\den(\mu)\left(u(\mu)-\pade(\mu)\right)=&u(\mu)I^{\Xi_S}(\den)(\mu)-I^{\Xi_S}(u\den)(\mu)\label{eq:error_equivalent}\\
=&\sum_{\mu'\in\Xi_S}\sum_{\lambda\in\Lambda}\frac{\den(\mu')\omega^{\Xi_S}(\mu)}{(\mu-\mu')\left.\omega^{\Xi_S}\right.'(\mu')}\left(\frac{v_\lambda}{\lambda-\mu}-\frac{v_\lambda}{\lambda-\mu'}\right)\nonumber\\
=&\davide{\omega^{\Xi_S}(\mu)\sum_{\lambda\in\Lambda}\frac{v_\lambda}{\left(\lambda-\mu\right)\omega^{\Xi_S}(\lambda)}\underbrace{\sum_{\mu'\in\Xi_S}\frac{\den(\mu')\omega^{\Xi_S}(\lambda)}{(\lambda-\mu')\left.\omega^{\Xi_S}\right.'(\mu')}}_{=I^{\Xi_S}(\den)(\lambda)=\den(\lambda)}}
\label{eq:error_equivalent_sum}\text{.}
\end{align}
After dividing by $\den(\mu)$, taking the norm, and applying \cref{lem:optimal_j}, we obtain
\begin{align}
\normV{u(\mu)-\pade(\mu)}=&\abs{\frac{\omega^{\Xi_S}(\mu)}{\den(\mu)}}\left(\sum_{\lambda\in\Lambda}\frac{\normV{v_\lambda}^2}{\abs{\lambda-\mu}^2\abs{\omega^{\Xi_S}(\lambda)}^2}\abs{\den(\lambda)}^2\right)^{1/2}\nonumber\\
\leq&\abs{\frac{\omega^{\Xi_S}(\mu)}{\den(\mu)}}\frac{1}{\min_{\Lambda}\abs{\,\cdot-\mu}}\,\jfun(\den)\label{eq:error_norm_partial}\\
\leq&\abs{\frac{\omega^{\Xi_S}(\mu)}{\den(\mu)}}\frac{1}{\min_{\Lambda}\abs{\,\cdot-\mu}}(1+\varepsilon)^S\frac{C'}{\Phi_K(\lambda_{N+1})^S}\text{,}\nonumber
\end{align}
for $S$ large enough. In particular, the constant $C'$ is independent of $S$ and $\varepsilon$.

Now, due to \cref{eq:nodal_conv}, the term $\abs{\omega^{\Xi_S}}$ is bounded by $(1+\varepsilon)^S\Phi_K^S$ for $S$ large enough. Thus, it just remains to show that $\abso{\den(\mu)}\min_{\Lambda}\abs{\,\cdot-\mu}$ is bounded away from 0 for $\mu\in E$. Since $E$ is a compact subset of $\C\setminus\Lambda$, the second factor is not troublesome. The first term is more problematic, since $\den$ may have roots inside $E$. However, \cref{th:poles_N} and the triangular inequality ensure that, for $S$ large enough, each root of $\den$ has distance from $E$ bounded away from 0. This yields the claim.
\end{proof}

If we restrict our interest to compact subsets $E$ of $K\setminus\Lambda$, we can obtain the simplified result
\begin{equation}
\max_{\mu\in E}\normV{u(\mu)-\pade(\mu)}\leq C_E\left(\frac{(1+\varepsilon)^2\capac(K)}{\Phi_K(\lambda_{N+1})}\right)^S\text{.}
\end{equation}
Thus, the rate of convergence of the approximation error is the same one that we could expect from polynomial approximants for a function holomorphic over $\{\mu\in\C,\Phi_K(\mu)<\Phi_K(\lambda_{N+1})\}$, having a pole or singularity at $\lambda_{N+1}$.

\subsubsection{Error convergence with respect to denominator degree}
Just like \cref{sec:Pole N convergence} describes an extension for variable $N$ of the convergence theory for approximate poles, here we consider a generalization of the error convergence theory in the case of diverging denominator degree. The result we are able to show is of a flavor similar to \cref{th:poles_notN}, and is summarized in the following.
\begin{theorem}\label{th:error_notN}
Let $(N_k)_{k\in\N}$, $(S_k)_{k\in\N}$, and $\Xi^{(k)}$ be as in \cref{th:poles_notN}. Fix $E\subset\C$ compact, and let \cref{as:with_scale} be satisfied for some fixed $\mu_0\in\C$. Then, for all $\varepsilon>0$, we have
\begin{equation*}
\lim_{k\to\infty}\capac\left(\left\{\mu\in E,\,\normV{u(\mu)-\padek(\mu)}\geq\varepsilon\right\}\right)=0\text{,}
\end{equation*}
with $\capac$ denoting the logarithmic capacity, see \cref{sec:Convergence preliminaries}.
\end{theorem}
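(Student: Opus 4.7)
The plan is to start from the error bound \cref{eq:error_norm_partial}, which, specialized to the $k$-indexed approximant, reads
\begin{equation*}
\normV{u(\mu) - \padek(\mu)} \leq \left|\frac{\omega^{\Xi^{(k)}}(\mu)}{\denk(\mu)}\right| \frac{\jfunk(\denk)}{\min_{\lambda\in\Lambda}\abs{\lambda-\mu}}\text{,}
\end{equation*}
and to show that, outside an exceptional subset $F_k \subset E$ whose logarithmic capacity tends to $0$, the right-hand side vanishes as $k \to \infty$. Convergence in capacity then follows at once, since for any $\varepsilon > 0$ the superlevel set $\{\mu \in E : \normV{u(\mu) - \padek(\mu)} \geq \varepsilon\}$ is eventually contained in $F_k$.

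First I would control the numerator $\abs{\omega^{\Xi^{(k)}}(\mu)}\jfunk(\denk)$. For the rational-approximation functional, I apply \cref{lem:optimal_j} with $\Lambda_0 = \{\sigma_i\}_{i=1}^{N_k}$ (the $\mu_0$-centered ordering \cref{eq:pole_ordering_mu0} introduced in the proof of \cref{th:poles_notN}) and estimate $\normT{\omega^{\Lambda_0}}_{N_k}$ from below via \cref{as:with_scale}, recovering essentially the bound \cref{eq:poles_notN_bound}. For the nodal polynomial, the Fej\'er property \cref{eq:nodal_conv} delivers the uniform estimate $\abs{\omega^{\Xi^{(k)}}(\mu)} \leq ((1+\varepsilon')\max_E\Phi_K)^{S_k}$ on the compact set $E$. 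Together these produce an envelope $M_k$ for the numerator, valid uniformly on $E$; the Stolz--Ces\`aro reduction already carried out in \cref{eq:poles_notN_bound_final} controls its growth as $k\to\infty$.

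The main obstacle is the lower bound on $\abs{\denk(\mu)}$, which can be arbitrarily small near the roots of $\denk$ --- in particular, near the spurious roots that \cref{th:poles_notN} does not rule out. Writing $\denk(\mu) = c_k\prod_{i=1}^{d_k}(\mu - \lambda_i^{(k)})$ with $d_k \leq N_k$ the effective degree, the upper bound in \cref{as:with_scale} yields $\abs{c_k} \geq \bigl(C_{\mu_0}\prod_{i=1}^{d_k}(R_{\mu_0} + \abs{\mu_0 - \lambda_i^{(k)}})\bigr)^{-1}$. A classical consequence of the identification of the Chebyshev constant with the logarithmic capacity --- for any monic polynomial $P$ of degree $n$ and any $\eta > 0$, the sublevel set $\{\mu\in\C : \abs{P(\mu)} \leq \eta^n\}$ has logarithmic capacity at most $\eta$ --- then implies that $\abs{\denk(\mu)} \geq \abs{c_k}\eta_k^{d_k}$ outside a set of capacity at most $\eta_k$. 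After further excising disks of radius $\eta_k$ around the finitely many poles of $u$ lying in $E$ (to handle the factor $\min_\Lambda\abs{\,\cdot - \mu}$), and choosing $\eta_k\to 0$ slowly enough that $\abs{c_k}\eta_k^{d_k}/M_k \to \infty$, the pointwise bound on $\normV{u - \padek}$ collapses to $0$ on $E \setminus F_k$ while $\capac(F_k) \to 0$. The hardest step is verifying that such an $\eta_k$ exists despite the potential growth of $\prod(R_{\mu_0} + \abs{\mu_0 - \lambda_i^{(k)}})$: this rests on a final Stolz--Ces\`aro reduction analogous to the one closing the proof of \cref{th:poles_notN}, whose hypothesis is again $\abs{\mu_0 - \sigma_{N+1}} \to \infty$, i.e., the absence of finite accumulation points in $\Lambda$.
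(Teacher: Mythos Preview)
Your outline follows the paper's strategy closely up to the bound on $\abs{\denk(\mu)}$, but the way you propose to handle that factor has a genuine gap.

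You decouple the normalized denominator as $\denk(\mu)=c_k\prod_{i=1}^{d_k}(\mu-\lambda_i^{(k)})$, bound $\abs{c_k}$ below via \cref{as:with_scale} by $\bigl(C_{\mu_0}\prod_i(R_{\mu_0}+\abs{\mu_0-\lambda_i^{(k)}})\bigr)^{-1}$, and then bound $\prod_i\abs{\mu-\lambda_i^{(k)}}>\eta_k^{d_k}$ outside a lemniscate of capacity $\eta_k$. The trouble is that these two bounds, taken separately, discard a cancellation that is essential. If even a single surrogate root $\lambda_i^{(k)}$ drifts far from $\mu_0$ (and nothing in the theory forbids this; \cref{th:poles_notN} only controls the roots that converge to exact poles), then $R_{\mu_0}+\abs{\mu_0-\lambda_i^{(k)}}$ is huge, driving $\abs{c_k}$ to zero, while your lemniscate bound still credits that same root with a factor of merely $\eta_k$ in the nodal product. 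In reality, on $E$ the factor $\abs{\mu-\lambda_i^{(k)}}$ is comparable to $\abs{\mu_0-\lambda_i^{(k)}}$ for such far roots, so $\abs{\denk(\mu)}$ does not degrade; but your separated bounds cannot see this. Concretely, a root at distance $B_k\to\infty$ contributes $B_k$ to the product you feed into the $\abs{c_k}$ estimate and only $\eta_k$ to the lemniscate bound, producing a spurious ratio $\eta_k/B_k$ that no Stolz--Ces\`aro argument on the \emph{exact} poles $\sigma_i$ can absorb (the approximate roots $\lambda_i^{(k)}$ are $k$-dependent and uncontrolled).

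The paper avoids this by keeping the ratio $\abs{\mu-\lambda_i^{(k)}}\big/(R_{\mu_0}+\abs{\mu_0-\lambda_i^{(k)}})$ intact and partitioning the roots into ``in'' ($\abs{\mu_0-\lambda_i^{(k)}}\leq 2R$) and ``out''. For out-roots the ratio is uniformly bounded below by $R/(R_{\mu_0}+2R)$, so they contribute no loss; the lemniscate exclusion is then applied only to the in-roots (and to the finitely many exact poles in $E$), yielding a monic polynomial of degree $\#\Lambda_{in}^{(k)}+\overline{N}\leq N_k+\overline{N}$ whose sublevel set has the prescribed capacity. With this partition in place, the product $\prod(R_{\mu_0}+\abs{\mu_0-\lambda_i^{(k)}})$ never appears, and the only remaining geometric-mean product is over the exact poles $\sigma_i$, to which the Stolz--Ces\`aro reduction legitimately applies. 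You should insert this in/out split before invoking the capacity lemma.
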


\begin{proof}
We rely on some concepts introduced in the proofs of \cref{th:poles_notN} and \cref{th:error_N}, namely the pole ordering \cref{eq:pole_ordering_mu0} and the intermediate bound \cref{eq:error_norm_partial}. To start, we follow the same steps as in the proof of \cref{th:poles_notN}, exploiting \cref{eq:error_norm_partial} and \cref{eq:jfun_optimum_weird} with $\Lambda_0=\{\sigma_j\}_{j=1}^{N_k}$. The resulting bound is
\begin{equation}\label{eq:error_bound_partial_notN}
\normV{u(\mu)-\padek(\mu)}\leq\frac{C_{k,\mu_0,R}\left(\sum_{\lambda\in\Lambda}\normV{v_\lambda}^2\right)^{1/2}}{c_{\mu_0}\abs{\denk(\mu)}\min_{\Lambda}\abs{\,\cdot-\mu}}\prod_{i=1}^{N_k}\frac{R+R_{\mu_0,K}}{\abs{\mu_0-\sigma_i}}\text{,}
\end{equation}
with $R=\max_E\abs{\,\cdot-\mu_0}$ and
\begin{equation*}
C_{k,\mu_0,R}=\left(\frac{R+R_{\mu_0,K}}{\abs{\mu_0-\sigma_{N_k+1}}-R_{\mu_0,K}}\right)^{S_k-N_k}\left(\frac{2\abs{\mu_0-\sigma_{N_k+1}}}{\abs{\mu_0-\sigma_{N_k+1}}-R_{\mu_0,K}}\right)^{N_k}\text{.}
\end{equation*}

In order to obtain the desired result, we need to manage carefully the two troublesome $\mu$-dependent terms in the denominator of \cref{eq:error_bound_partial_notN}. To this aim, let $\Lambda^{(k)}=\smash{\{\lambda_j^{(k)}\}_{j=1}^{N_k}}$ be the set of roots of $\denk$. We partition $\Lambda^{(k)}$ into two (potentially empty) sets $\Lambda_{in}^{(k)}$ and $\Lambda_{out}^{(k)}$ according to the characterization
\begin{equation*}
\lambda_j^{(k)}\in\Lambda_{in}^{(k)}\quad\text{if and only if}\quad\abs{\lambda_j^{(k)}-\mu_0}\leq 2R\text{.}
\end{equation*}
Given $\#\Lambda_{in}^{(k)}$ and $\overline{N}$ the cardinalities of $\Lambda_{in}^{(k)}$ and $\Lambda\cap E$ respectively, we define the family of lemniscates
\begin{equation}\label{eq:lemniscate}
\mathcal{E}_{k,\delta}=\left\{\mu\in\C,\,\left(\prod_{\lambda'\in\Lambda_{in}^{(k)}}\abs{\mu-\lambda'}\right)\left(\prod_{\lambda\in\Lambda\cap E}\abs{\mu-\lambda}\right)\leq\delta^{\#\Lambda_{in}^{(k)}+\overline{N}}\right\}\text{,}
\end{equation}
which depends on the index $k$ and on the (small) value $\delta>0$.

We remark that the exponent of $\delta$ in \cref{eq:lemniscate} is equal to the degree of the monic polynomial (in $\mu$) whose magnitude is the left hand side of the inequality. Thus \cite[Theorem 6.6.3]{Baker1996}, the logarithmic capacity of $\mathcal{E}_{k,\delta}$ is equal to $\delta$. (We are assuming without loss of generality that $\smash{\#\Lambda_{in}^{(k)}+\overline{N}}>0$. If this is not the case, $\mathcal{E}_{k,\delta}$ is empty for $\delta<1$, and the claim holds quite trivially.)

The main structure of the remainder of the proof is the following:
\begin{enumerate}[(i)]
\item\label{en:delta} we define explicitly a sequence $(\delta_k)_{k=1,2,\ldots}$ converging to 0;
\item\label{en:epsilon} we show that $\normV{u(\mu)-\padek(\mu)}<\varepsilon$ for all $\mu\in E\setminus\mathcal{E}_{k,\delta_k}$, for large $k$.
\end{enumerate}
Then the claim follows, since, by inclusion,
\begin{equation*}
0\leq\lim_{k\to\infty}\capac\left(\left\{\mu\in E,\,\normV{u(\mu)-\padek(\mu)}\geq\varepsilon\right\}\right)\leq\lim_{k\to\infty}\capac\left(\mathcal{E}_{k,\delta_k}\right)=\lim_{k\to\infty}\delta_k=0\text{.}
\end{equation*}

However, before we can proceed with either step, it is necessary to introduce some additional bounds in \cref{eq:error_bound_partial_notN}. First, since $\denk$ is normalized, by \cref{as:with_scale} it must hold
\begin{equation*}
\abs{\denk(\mu)}\geq\frac{1}{C_{\mu_0}}\left(\prod_{\lambda'\in\Lambda_{in}^{(k)}}\frac{\abs{\mu-\lambda'}}{R_{\mu_0}+\abs{\mu_0-\lambda'}}\right)\left(\prod_{\lambda''\in\Lambda_{out}^{(k)}}\frac{\abs{\mu-\lambda''}}{R_{\mu_0}+\abs{\mu_0-\lambda''}}\right)\text{.}
\end{equation*}
In the first group of factors, $\abs{\mu_0-\lambda'}$ is bounded from above by $2R$. In the second group, since $\abs{\mu-\mu_0}\leq R$ for all $\mu\in E$ by definition, we have
\begin{equation*}
\frac{\abs{\mu-\lambda''}}{R_{\mu_0}+\abs{\mu_0-\lambda''}}\geq\frac{\abs{\mu_0-\lambda''}}{R_{\mu_0}+\abs{\mu_0-\lambda''}}-\frac{\abs{\mu-\mu_0}}{R_{\mu_0}+\abs{\mu_0-\lambda''}}\geq\frac{R}{R_{\mu_0}+2R}\text{.}
\end{equation*}
Hence,
\begin{equation*}
\abs{\denk(\mu)}\geq\frac{1}{C_{\mu_0}}\frac{R^{N_k-\#\Lambda_{in}^{(k)}}}{\left(R_{\mu_0}+2R\right)^{N_k}}\prod_{\lambda'\in\Lambda_{in}^{(k)}}\abs{\mu-\lambda'}\text{,}
\end{equation*}
which, provided $\delta\leq R$, for all $\mu\in E\setminus\mathcal{E}_{k,\delta}$, implies
\begin{align*}
\abs{\denk(\mu)}\prod_{\lambda\in\Lambda\cap E}\abs{\mu-\lambda}\geq&\frac{R^{N_k-\#\Lambda_{in}^{(k)}}}{C_{\mu_0}\left(R_{\mu_0}+2R\right)^{N_k}}\left(\prod_{\lambda'\in\Lambda_{in}^{(k)}}\abs{\mu-\lambda'}\right)\left(\prod_{\lambda\in\Lambda\cap E}\abs{\mu-\lambda}\right)\\
>&\frac{R^{N_k-\#\Lambda_{in}^{(k)}}\delta^{\#\Lambda_{in}^{(k)}+\overline{N}}}{C_{\mu_0}\left(R_{\mu_0}+2R\right)^{N_k}}\geq\frac{\delta^{N_k+\overline{N}}}{C_{\mu_0}\left(R_{\mu_0}+2R\right)^{N_k}}\text{.}
\end{align*}

Accordingly, we can simplify \cref{eq:error_bound_partial_notN} for all $\mu\in E\setminus\mathcal{E}_{k,\delta}$:
\begin{multline*}
\normV{u(\mu)-\padek(\mu)}<\frac{C_{\mu_0}\left(\sum_{\lambda\in\Lambda}\normV{v_\lambda}^2\right)^{1/2}\prod_{\lambda\in\Lambda\cap E}\abs{\mu-\lambda}}{c_{\mu_0}\min_{\Lambda}\abs{\,\cdot-\mu}}\times\\
\times C_{k,\mu_0,R}\frac{\left(R_{\mu_0}+2R\right)^{N_k}}{\delta^{N_k+\overline{N}}}\prod_{i=1}^{N_k}\frac{R+R_{\mu_0,K}}{\abs{\mu_0-\sigma_i}}\text{.}
\end{multline*}
Since $E$ is compact and $\Lambda$ has no finite limit point, the distance between $E$ and $\Lambda\setminus E$ is strictly positive. Hence, the quantity
\begin{equation*}
C_{\mu_0}\left(\sum_{\lambda\in\Lambda}\normV{v_\lambda}^2\right)^{1/2}\prod_{\lambda\in\Lambda\cap E}\abs{\mu-\lambda}\,\Big/\,\left(c_{\mu_0}\min_{\Lambda}\abs{\,\cdot-\mu}\right)
\end{equation*}
is bounded, for all $\mu\in E$, by a constant $C'$ depending only on $E$, $\mu_0$, and $\Lambda$.

Now it is trivial to achieve \cref{en:epsilon} by setting
\begin{equation}\label{eq:error_bound_delta}
\delta=\delta_k=\bigg(\frac{C'C_{k,\mu_0,R}\left(R_{\mu_0}+2R\right)^{N_k}}{\varepsilon}\prod_{i=1}^{N_k}\frac{R+R_{\mu_0,K}}{\abs{\mu_0-\sigma_i}}\bigg)^{1/(N_k+\overline{N})}\text{.}
\end{equation}
(We remark that we still need to guarantee that the working assumption $\delta\leq R$ is satisfied. However, since we are planning to prove \cref{en:delta}, such condition will trivially hold, provided $k$ is large enough.)

At this point, it suffices to apply the same strategy employed in the final part of the proof of \cref{th:poles_notN}. In particular, we have
\begin{equation*}
\lim_{k\to\infty}\left(\prod_{i=1}^{N_k}\frac{R+R_{\mu_0,K}}{\abs{\mu_0-\sigma_i}}\right)^{1/(N_k+\overline{N})}=\lim_{N\to\infty}\frac{R+R_{\mu_0,K}}{\abs{\mu_0-\sigma_{N+1}}}=0\text{,}
\end{equation*}
while the other factors in \cref{eq:error_bound_delta} stay bounded as $k$ increases.
\end{proof}

Due to \cref{eq:capacity_bounds}, \cref{th:error_notN} can be weakened to prove the convergence in probability of $\smash{\padek}$ to $u$, when both are interpreted as functions from the probability space $(E,\mcB(E),\ell/\ell(E))$ to the Banach space $(V,\normV{\cdot})$.

Still, convergence in capacity is somewhat stronger than that in probability: for instance, it ensures that the set of parameter values for which the approximant yields an error above $\varepsilon$ in the limit, namely
\begin{equation*}
\bigcap_{l\in\N}\bigcup_{k\geq l}\left\{\mu\in E\,:\,\normV{u(\mu)-\padek(\mu)}\geq\varepsilon\right\}\text{,}
\end{equation*}
cannot include any curve in $\C$.

\subsection{Generalizations to non-orthogonal residues}\label{sec:nonortho}
\davide{In the previous sections, we have been working under the assumption that the residues $\{v_\lambda\}_{\lambda\in\Lambda}$ form a $V$-orthogonal set. However, most of our results for fixed $N$ can be extended to the general case, in which we impose no condition on the angles between residues. As such, in this section we remove the assumption of residue orthogonality.

We start by extending \cref{lem:optimal_j}.
\begin{lemma}\label{lem:optimal_j_no}
Let $Q$, $\Lambda_0$, and $\varepsilon$ be the same as in \cref{lem:optimal_j}. Then
\begin{equation*}
\jfun(Q)^2=\sum_{\lambda,\lambda'\in\Lambda}\frac{\dualV{v_\lambda}{v_{\lambda'}}}{\omega^{\Xi_S}(\lambda)\overline{\omega^{\Xi_S}(\lambda')}}Q(\lambda)\overline{Q(\lambda')}\text{.}
\end{equation*}
Moreover,
\begin{equation*}
\jfun(\den)\leq\frac{\sum_{\lambda\in\Lambda\setminus\Lambda_0}\normV{v_\lambda}}{\normT{\omega^{\Lambda_0}}_N}\sup_{\Lambda\setminus\Lambda_0}\abs{\frac{\omega^{\Lambda_0}}{\omega^{\Xi_S}}}\text{,}
\end{equation*}
and there exists $S_{\varepsilon,N}$ such that
\begin{equation*}
\jfun(\den)\leq (1+\varepsilon)^S\frac{\sum_{j>N}\normV{v_{\lambda_j}}}{\normT{\prod_{j=1}^N\left(\ \cdot-\lambda_j\right)}_N}\frac{\prod_{j=1}^N\abs{\lambda_{N+1}-\lambda_j}}{\Phi_K(\lambda_{N+1})^S}\quad\text{for }S>S_{\varepsilon,N}\text{,}
\end{equation*}
provided $\Phi_K(\lambda_{N+1})<\Phi_K(\lambda_{N+2})$.
\end{lemma}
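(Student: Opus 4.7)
The proof would follow the structure of \cref{lem:optimal_j}, but replacing the Pythagorean-type simplification (which used residue orthogonality) with a triangle-inequality argument on the $V$-norm.

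First, I would derive the representation of $\jfun(Q)^2$. The computation using the barycentric form \cref{eq:interpolant_explicit} goes through verbatim, since it never invoked orthogonality: one obtains
\begin{equation*}
\frac{1}{(S-1)!}\dmu{S-1}I^{\Xi_S}(uQ) = \sum_{\lambda\in\Lambda}\frac{v_\lambda}{\omega^{\Xi_S}(\lambda)}Q(\lambda)\text{.}
\end{equation*}
Taking the squared $V$-norm and expanding bilinearly (without cancelling cross terms) yields the first claim directly.

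For the second inequality, the triangle inequality in $V$ gives
\begin{equation*}
\jfun(Q) = \normV{\sum_{\lambda\in\Lambda}\frac{v_\lambda Q(\lambda)}{\omega^{\Xi_S}(\lambda)}} \leq \sum_{\lambda\in\Lambda}\frac{\normV{v_\lambda}\,\abs{Q(\lambda)}}{\abs{\omega^{\Xi_S}(\lambda)}}\text{.}
\end{equation*}
As in the orthogonal case, I would use the minimality $\jfun(\den) \leq \jfun(Q)$ for any $Q\in\PspaceN{N}$ and substitute the test polynomial $Q = \omega^{\Lambda_0}/\normT{\omega^{\Lambda_0}}_N$, whose roots kill every summand with $\lambda\in\Lambda_0$. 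Bounding $\abs{\omega^{\Lambda_0}(\lambda)}/\abs{\omega^{\Xi_S}(\lambda)}$ uniformly by the supremum over $\Lambda\setminus\Lambda_0$ yields the middle inequality of the lemma.

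The third inequality (asymptotic bound for $S>S_{\varepsilon,N}$) is then obtained by the same Fej\'er point analysis used at the end of the proof of \cref{lem:optimal_j}: picking $\Lambda_0 = \{\lambda_j\}_{j=1}^N$, one uses \cref{eq:nodal_conv_out} to replace $\abs{\omega^{\Xi_S}}^{1/S}$ by $\Phi_K$ up to a factor $(1+\varepsilon)^S$ on the bounded set of candidate maximizers, and the monotonicity of $\Phi_K$ combined with the hypothesis $\Phi_K(\lambda_{N+1})<\Phi_K(\lambda_{N+2})$ shows the maximum is eventually attained at $\lambda_{N+1}$. Nothing in this tail argument depended on orthogonality, so it transfers without modification.

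I expect no serious obstacle: the qualitative effect of dropping orthogonality is simply that Bessel's equality becomes a triangle inequality, which replaces the $\ell^2$-sum $\bigl(\sum\normV{v_\lambda}^2\bigr)^{1/2}$ by the (larger) $\ell^1$-sum $\sum\normV{v_\lambda}$. The only point deserving a brief comment is that the resulting bound still requires $\sum_{\lambda\in\Lambda}\normV{v_\lambda}<\infty$ to be meaningful for infinite $\Lambda$; this is stronger than the square-summability assumed in \cref{eq:map_decomposition}, but for the purposes of the lemma we only need the tail $\sum_{\lambda\notin\Lambda_0}\normV{v_\lambda}$ to be finite, which is implicit once one restricts to $\Lambda_0$ containing all ``large'' residues (or can be included as a hypothesis in later applications).
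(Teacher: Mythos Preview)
Your proposal is correct and matches the paper's own argument essentially line for line: the paper's proof sketch says to follow \cref{lem:optimal_j}, replacing the orthogonality-based identity \cref{eq:optimal_j_bound_middle} by the triangle-inequality bound you wrote, and then carry over the Fej\'er-point tail argument unchanged. Your closing remark on the need for $\ell^1$-summability of the residues is also apt; the paper imposes exactly this as an explicit hypothesis in the downstream results (\cref{th:poles_N_no,th:error_N_no}).
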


\begin{proofsketch}
One can follow most of the proof of \cref{lem:optimal_j}. However, \cref{eq:optimal_j_bound_middle} does not hold due to the lack of orthogonality. Still, we can replace it by
\begin{equation*}
\jfun(\den)^2\leq\jfun\left(\frac{\omega^{\Lambda_0}}{\normT{\omega^{\Lambda_0}}_N}\right)^2\leq\frac{1}{\normT{\omega^{\Lambda_0}}_N^2}\left(\sum_{\lambda\in\Lambda\setminus\Lambda_0}\normV{v_\lambda}\abs{\frac{\omega^{\Lambda_0}(\lambda)}{\omega^{\Xi_S}(\lambda)}}\right)^2\text{,}
\end{equation*}
which holds by triangular inequality.
\end{proofsketch}

We remark the the discrete 2-norm of the residue $V$-norms in \cref{lem:optimal_j} has been replaced by the stronger 1-norm. In order to derive convergence properties, we need to introduce the assumption that the $V$-norms of the residues are summable, so that the inequalities in \cref{lem:optimal_j_no} are nontrivial.
\begin{theorem}\label{th:poles_N_no}
Let $N$, $\{\lambda_j^{\Xi_S}\}_{j=1}^N$, and $\varepsilon$ be the same as in \cref{th:poles_N}. If $\sum_{\lambda\in\Lambda}\normV{v_\lambda}<\infty$, there exists $S_{\varepsilon,N}$ such that
\begin{equation*}
\abs{\den\left(\lambda_j\right)}\leq C_j(1+\varepsilon)^{4S}\left(\frac{\Phi_K(\lambda_j)}{\Phi_K(\lambda_{N+1})}\right)^S\quad\text{for }S>S_{\varepsilon,N}
\end{equation*}
and
\begin{equation*}
\min_{i=1,\ldots,N}\abs{\lambda_i^{\Xi_S}-\lambda_j}\leq C_j(1+\varepsilon)^{4S}\left(\frac{\Phi_K(\lambda_j)}{\Phi_K(\lambda_{N+1})}\right)^S\quad\text{for }S>S_{\varepsilon,N}\text{,}
\end{equation*}
for $j=1,\ldots,N$, with $C_j$ independent of $S$ and $\varepsilon$.
\end{theorem}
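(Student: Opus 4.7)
The plan is to follow the two-step structure of \cref{lem:den_N_lambda} and \cref{th:poles_N}, with the Gramian-based extraction of \cref{lem:den_N_lambda} replaced by a projection argument in $V$ that tolerates non-orthogonal residues.

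For the first inequality, I start from the identity (derived as in the proof of \cref{lem:optimal_j})
\begin{equation*}
\sum_{\lambda\in\Lambda} \frac{v_\lambda \den(\lambda)}{\omega^{\Xi_S}(\lambda)} = \frac{1}{(S-1)!}\dmu{S-1} I^{\Xi_S}(u\den)\text{,}
\end{equation*}
whose $V$-norm equals $\jfun(\den)$. Let $P^\perp$ denote the $V$-orthogonal projection onto the complement of $\mathrm{span}\{v_{\lambda_i}\}_{1 \leq i \leq N,\,i \neq j}$. Under the (generic) assumption that $\{v_{\lambda_1},\ldots,v_{\lambda_N}\}$ are linearly independent, $\normV{P^\perp v_{\lambda_j}} > 0$, and $P^\perp$ annihilates all contributions with indices $i \leq N$, $i \neq j$; the triangle inequality then yields
\begin{equation*}
\normV{P^\perp v_{\lambda_j}}\frac{\abs{\den(\lambda_j)}}{\abs{\omega^{\Xi_S}(\lambda_j)}} \leq \jfun(\den) + \sum_{i>N}\frac{\normV{v_{\lambda_i}}\abs{\den(\lambda_i)}}{\abs{\omega^{\Xi_S}(\lambda_i)}}\text{.}
\end{equation*}
The main term on the right is controlled via \cref{lem:optimal_j_no}, which exploits the summability hypothesis $\sum_\lambda \normV{v_\lambda} < \infty$ to give a bound of order $(1+\varepsilon)^S / \Phi_K(\lambda_{N+1})^S$. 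The tail is handled by combining the polynomial bound $\abs{\den(\lambda_i)} \leq C_N(1+\abs{\lambda_i})^N$ (from \cref{as:no_scale} and the normalization $\normT{\den}_N=1$) with two complementary lower bounds on $\abs{\omega^{\Xi_S}(\lambda_i)}$: \cref{eq:nodal_conv_out} in a bounded region away from $\partial K$, and the elementary $\abs{\omega^{\Xi_S}(\lambda_i)} \geq (\abs{\lambda_i}/2)^S$ valid for $\abs{\lambda_i}$ large. Together with the summability of $\normV{v_\lambda}$, the tail is also of order $(1+\varepsilon)^S/\Phi_K(\lambda_{N+1})^S$. Multiplying through by $\abs{\omega^{\Xi_S}(\lambda_j)}/\normV{P^\perp v_{\lambda_j}}$ and using $\abs{\omega^{\Xi_S}(\lambda_j)} \leq ((1+\varepsilon)\Phi_K(\lambda_j))^S$ from \cref{eq:nodal_conv}, the first inequality follows, with the liberal exponent $(1+\varepsilon)^{4S}$ comfortably absorbing all accumulated $(1\pm\varepsilon)$-factors.

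The second inequality is then an immediate consequence: the normalization of $\den$ together with \cref{as:no_scale} produces the lower bound \cref{eq:poles_N_den_norm} for $\abs{\den(\lambda_j)}$ in terms of the distances from $\lambda_j$ to the roots $\{\lambda_i^{\Xi_S}\}_{i=1}^N$, which, compared with the first inequality, forces at least one root to lie within the required distance, exactly as in the proof of \cref{th:poles_N}. The most technically delicate step is the tail estimation, which requires splitting the sum over $i > N$ into a bounded and an unbounded part so that the Fej\'er asymptotics of \cref{eq:nodal_conv_out} apply to the former. A subtler conceptual obstacle is the linear independence of the first $N$ residues, which is automatic in the orthogonal case (orthogonality plus nontriviality) but may fail here; in such degenerate cases $P^\perp v_{\lambda_j}$ vanishes and the argument breaks down, requiring refinement in terms of the effective rank of the residue family.
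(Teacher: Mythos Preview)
Your argument is correct (under the linear-independence caveat you flag), and it proceeds by a genuinely different route from the paper. The paper's sketch stays in the $\C^{N+1}$ Gramian framework of \cref{lem:den_N_lambda}: it writes the difference $\Gfull-\Gpart$ as a double sum over $\Lambda\times\Lambda$ with weights $\dualV{v_\lambda}{v_{\lambda'}}\big/\big(\omega^{\Xi_S}(\lambda)\overline{\omega^{\Xi_S}(\lambda')}\big)$, and explains the halved exponent by the presence of ``off-diagonal'' terms such as $(\lambda,\lambda')=(\lambda_1,\lambda_{N+1})$, which contribute only a single factor $\Phi_K(\lambda_{N+1})^{-S}$ instead of two. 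You, by contrast, work directly in $V$: the orthogonal projection $P^\perp$ kills the first $N-1$ competing residues in the vector identity, and the tail over $i>N$ is controlled termwise using summability and the Fej\'er asymptotics \cref{eq:nodal_conv_out}. Your approach is more elementary (no spectral perturbation of the Gramian) and makes the role of the hypothesis $\sum_\lambda\normV{v_\lambda}<\infty$ completely transparent, while the paper's matrix viewpoint more naturally explains \emph{why} the rate drops from $2S$ to $S$ and ties back to the SVD actually computed in the algorithm. The linear-independence issue you raise is real and affects both arguments: in the Gramian picture it manifests as singularity of the $N\times N$ residue Gram matrix, which equally obstructs the rank-$N$ decomposition of $\Gpart$, though the paper's sketch does not make this explicit.
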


\begin{proofsketch}
The proof is similar to that of \cref{lem:den_N_lambda} and \cref{th:poles_N}. However, we can observe that the exponents in the convergence rates are halved. This is caused by ``interference'' between the modes: the difference between the representative matrices $\Gfull$ and $\Gpart$ is
\begin{equation*}
\Gfull-\Gpart=\sum_{\substack{\lambda,\lambda'\in\Lambda \\ (\lambda,\lambda')\notin\{1,\ldots,N\}^2}}\frac{\dualV{v_\lambda}{v_{\lambda'}}}{\omega^{\Xi_S}(\lambda)\overline{\omega^{\Xi_S}(\lambda')}}\oomega_{\lambda'}^{}\oomega_\lambda^H\text{,}
\end{equation*}
which, in particular, contains ``off-diagonal'' terms, for instance the one corresponding to $\lambda=1$ and $\lambda'=N+1$. When computing an upperbound for a term of this form, we cannot hope to obtain more than a \emph{single} factor of $\Phi_K(\lambda_{N+1})^{-S}$, whereas two such factors can be gained in the orthogonal case. Similarly, the correlation between the first $N$ resonances causes one less factor of $\abs{\omega^{\Xi_S}(\lambda_j)}$ to be obtainable from $\Gpart$.
\end{proofsketch}

\begin{theorem}\label{th:error_N_no}
Let $N$, $\varepsilon$, and $E$ be as in \cref{th:error_N}. If $\sum_{\lambda\in\Lambda}\normV{v_\lambda}<\infty$, there exists $S_{\varepsilon,N,E}$ such that
\begin{equation*}
\normV{u(\mu)-\pade(\mu)}\leq C_E(1+\varepsilon)^{4S}\left(\frac{\Phi_K(\mu)}{\Phi_K(\lambda_{N+1})}\right)^S\quad\text{for all }\mu\in E,\,S>S_{\varepsilon,N,E}\text{,}
\end{equation*}
with $C_E$ independent of $S$ and $\varepsilon$.
\end{theorem}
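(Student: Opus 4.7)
The plan is to adapt the proof of \cref{th:error_N}, substituting the Pythagorean step (which was the only place residue orthogonality was invoked) with a pointwise triangle inequality in $V$. Starting from the error identity \cref{eq:error_equivalent_sum}, dividing by $\den(\mu)$, and estimating the $V$-norm of the sum termwise---which is legitimate because the series converges absolutely under the hypothesis $\sum_\lambda\normV{v_\lambda}<\infty$---yields
\begin{equation*}
\normV{u(\mu)-\pade(\mu)}\leq\abs{\frac{\omega^{\Xi_S}(\mu)}{\den(\mu)}}\frac{1}{\min_{\lambda\in\Lambda}\abs{\lambda-\mu}}\sum_{\lambda\in\Lambda}\frac{\normV{v_\lambda}\abs{\den(\lambda)}}{\abs{\omega^{\Xi_S}(\lambda)}}\text{.}
\end{equation*}
The problem thus reduces to estimating this $\ell^1$-type sum, which takes the place of the $\ell^2$-type functional $\jfun(\den)$ used in the orthogonal proof.

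I would then split the sum into a head over $\Lambda_0=\{\lambda_j\}_{j=1}^N$ and a tail over $\Lambda\setminus\Lambda_0$. For the head, \cref{th:poles_N_no} supplies the pointwise bound $\abs{\den(\lambda_j)}\leq C_j(1+\varepsilon)^{4S}\bigl(\Phi_K(\lambda_j)/\Phi_K(\lambda_{N+1})\bigr)^S$, and \cref{eq:nodal_conv_out} (under the generic assumption $\lambda_j\notin\partial K$) gives the matching lower bound $\abs{\omega^{\Xi_S}(\lambda_j)}\geq(1-\varepsilon)^S\Phi_K(\lambda_j)^S$, so each of the $N$ head-terms is at most a constant times $[(1+\varepsilon)^4/(1-\varepsilon)]^S/\Phi_K(\lambda_{N+1})^S$. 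For the tail, I would use \cref{as:no_scale} together with $\normT{\den}_N=1$ to bound $\abs{\den(\lambda_j)}$ by a polynomial of degree $N$ in $\abs{\lambda_j-\mu_0}$, and $\Phi_K(\lambda_j)\geq\Phi_K(\lambda_{N+1})>\capac(K)$ together with \cref{eq:nodal_conv_out} to bound $\abs{\omega^{\Xi_S}(\lambda_j)}$ from below by a growing geometric factor; combined with $\sum_j\normV{v_{\lambda_j}}<\infty$, the tail series converges absolutely and inherits the same $\Phi_K(\lambda_{N+1})^{-S}$ rate.

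To finish, I would apply $\abs{\omega^{\Xi_S}(\mu)}\leq(1+\varepsilon)^S\Phi_K(\mu)^S$ from \cref{eq:nodal_conv} and observe that $\abs{\den(\mu)}$ is uniformly bounded below on $E$ for large $S$, since \cref{th:poles_N_no} eventually confines the roots of $\den$ to arbitrarily small neighborhoods of $\{\lambda_j\}_{j=1}^N\subset\C\setminus E$, exactly as in the proof of \cref{th:error_N}. After absorbing the various $(1+\varepsilon)^{O(S)}$ contributions into a single $(1+\varepsilon)^{4S}$ factor by a routine reparametrization of $\varepsilon$, the claim follows. The main obstacle, foreshadowed by the proofsketch of \cref{lem:optimal_j_no}, is precisely the loss of the Pythagorean identity: because we can control the error only through an $\ell^1$ sum over $\Lambda$, we cannot reuse any $\jfun(\den)$-style estimate and must instead feed in the pointwise bound on $\abs{\den(\lambda_j)}$ from \cref{th:poles_N_no}. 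It is this pointwise step that carries the weaker pre-factor and thereby degrades the $(1+\varepsilon)$ exponent from $2S$ (orthogonal case) to $4S$ here.
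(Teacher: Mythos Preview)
Your strategy matches the paper's proof sketch: replace the Pythagorean step by the triangle inequality to obtain an $\ell^1$ sum over $\Lambda$, split into head ($j\le N$) and tail ($j>N$), invoke \cref{th:poles_N_no} for the head, and handle the prefactor $\abs{\omega^{\Xi_S}(\mu)/\den(\mu)}$ exactly as in \cref{th:error_N}.

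The one place your outline needs tightening is the tail. Bounding $\abs{\den(\lambda_j)}$ by a degree-$N$ polynomial in $\abs{\lambda_j-\mu_0}$ and then invoking $\Phi_K(\lambda_j)\ge\Phi_K(\lambda_{N+1})$ does not, as written, produce an $S$-independent constant: the polynomial prefactor is unbounded in $j$, so you cannot simply sum it against $\sum_j\normV{v_{\lambda_j}}$; and \cref{eq:nodal_conv_out} holds uniformly only on \emph{compact} subsets of $\C\setminus\partial K$, not on the unbounded set $\{\lambda_j\}_{j>N}$. The paper's fix---which is what ``the argument from the last portion of \cref{lem:optimal_j}, with some modifications to account for the presence of $\den$'' refers to---is to factor the tail as
\[
\Bigl(\sup_{j>N}\abs{\den(\lambda_j)/\omega^{\Xi_S}(\lambda_j)}\Bigr)\sum_{j>N}\normV{v_{\lambda_j}}
\]
and to argue, as in \cref{lem:optimal_j}, that the maximizer of $\abs{\den/\omega^{\Xi_S}}$ over $\Lambda\setminus\Lambda_0$ stays in a bounded set uniformly in $S$ (numerator degree $\le N$, denominator degree $S>N$; the normalization $\normT{\den}_N=1$ and \cref{th:poles_N_no} keep the roots and coefficients of $\den$ under control as $S$ varies). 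On that bounded set, \cref{eq:nodal_conv_out} applies and delivers $\sup_{j>N}\abs{\den(\lambda_j)/\omega^{\Xi_S}(\lambda_j)}\le C(1+\varepsilon)^S\Phi_K(\lambda_{N+1})^{-S}$. With this adjustment your argument is complete and coincides with the paper's.
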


\begin{proofsketch}
Since \cref{lem:optimal_j} does not hold, it is impossible to apply the middle part of the proof of \cref{th:error_N}. Instead, we can split the sum in \cref{eq:error_equivalent_sum} into two parts: for the first $N$ poles $\lambda_1,\ldots,\lambda_N$, \cref{th:poles_N_no} can be applied to bound $|\den/\omega^{\Xi_S}|$ from above; for the remaining resonances, it suffices to use the argument from the last portion of \cref{lem:optimal_j}, with some modifications to account for the presence of $\den$.
\end{proofsketch}

Unfortunately, convergence results for variable $N$ do not appear to be possible in the non-orthogonal case, without some strong hypotheses on the ``amount of correlation'' between resonances: for instance, we believe that one could work under the assumption that the worst-case angle between residues
\begin{equation*}
\alpha(\lambda)=\inf_{\lambda'\in\Lambda\setminus\{\lambda\}}\frac{\dualV{v_\lambda}{v_{\lambda'}}}{\normV{v_\lambda}\normV{v_{\lambda'}}}
\end{equation*}
is bounded away from 0 for all poles $\lambda$ in the region of interest.
}

\section{A posteriori error indicators}\label{sec:Greedy}
All convergence results presented above are \emph{a priori} estimates. In particular, they contain quantities which are often not available, namely the number and locations of the poles inside (and of some of the ones outside) the parameter domain $K$. Moreover, the results in \cref{th:poles_N,th:error_notN} hold only (with some extensions) if the solution map is of the form \cref{eq:map_decomposition}, whereas minimal rational interpolants can, in principle, be applied to more general parametric problems and sample points sets.

A typical MOR problem can be cast in the following form: let a certain parameter set $K$ and a parametric problem as in \cref{eq:parametric_problem} be given, along with some fixed tolerance $\varepsilon>0$. The task is to obtain an approximate solution map $\widetilde{u}$ such that
\begin{equation*}
\max_{\mu\in K}\normV{u(\mu)-\widetilde{u}(\mu)}\leq\varepsilon\quad\text{or}\quad\max_{\mu\in K}\frac{\normV{u(\mu)-\widetilde{u}(\mu)}}{\normV{u(\mu)}}\leq\varepsilon\text{.}
\end{equation*}
In our framework, the presence of singularities in the true and, possibly, the approximate solution map within $K$ may make the conditions above meaningless. In such cases, it may be preferable \cite{Chevreuil2012} to consider a residual-based accuracy requirement
\begin{equation}\label{eq:residual_tolerance}
\max_{\mu\in K}\normW{\mcF_\mu\left(\widetilde{u}(\mu)\right)}\leq\varepsilon\quad\text{or}\quad\max_{\mu\in K}\frac{\normW{\mcF_\mu\left(\widetilde{u}(\mu)\right)}}{\normW{\mcF_\mu\left(0\right)}}\leq\varepsilon\text{,}
\end{equation}
with the $\normW{\cdot}$-norm replacing $\normV{\cdot}$ to account for regularity differences between residual and solution.

Now, let us take as approximate map $\widetilde{u}$ the minimal rational interpolant $\padet$ with a certain denominator degree $N$ and samples at the $S$ points $Z_S$, which need not be Fej\'er points for $K$. We face the task of understanding whether the required tolerance $\varepsilon$ is achieved, in the sense of \cref{eq:residual_tolerance}.

In particular, let us consider the problem of computing \emph{a posteriori} the norm of the residual $\normW{\mcF_\mu\left(\widetilde{u}(\mu)\right)}$ at some given point $\mu\in K$. In order to proceed, we will assume that the operator $\mcF_\mu$ is linear and has a separable form in $\mu$, i.e. that there exist two families of complex-valued functions $\{\theta_i^F\}_{i=1}^{n_F}$ and $\{\theta_i^f\}_{i=1}^{n_f}$, a family of operators $\{F_i\}_{i=1}^{n_F}$ over (suitable subsets of) $V$, and a family $\{f_i\}_{i=1}^{n_f}$ of elements of $V$, such that
\begin{equation}\label{eq:affine_decomposition}
\mcF_\mu(v)=F(\mu)v-f(\mu)=\sum_{i=1}^{n_F}\theta_i^F(\mu)F_iv-\sum_{i=1}^{n_f}\theta_i^f(\mu)f_i\text{.}
\end{equation}
This is the ideal situation to employ RB methods \cite{Chen2010, Hesthaven2016, Quarteroni2015, Rozza2008}, and many strategies have been devised to approximate general parametric problems into the form \cref{eq:affine_decomposition}, e.g. the (D)EIM \cite{Chaturantabut2010,Grepl2007} and hyper-reduction techniques \cite{Carlberg2013}.

Now, $F(\mu)$ can be applied to both sides of \cref{eq:error_equivalent} to obtain
\begin{multline}\label{eq:affine_residual_decomposition}
\dent(\mu)\mcF_\mu\left(\padet(\mu)\right)=\sum_{\mu'\in Z_S}\frac{\dent(\mu')\,\omega^{Z_S}(\mu)}{(\mu-\mu')\left.\omega^{Z_S}\right.'(\mu')}\mcF_\mu\left(u(\mu')\right)\\
=\sum_{i=1}^{n_F}F_i\Delta^{Z_S}\left(\theta_i^FI^{Z_S}\left(u\dent\right)\right)(\mu)-\sum_{i=1}^{n_f}\Delta^{Z_S}\left(\theta_i^f\dent\right)(\mu)\,f_i\text{.}
\end{multline}
with $\Delta^{Z_S}(\phi)=\phi-I^{Z_S}(\phi)$ the interpolation error for a function $\phi$. \hide{(The properties $\Delta^{Z_S}(\dent)\equiv 0$ and $\Delta^{Z_S}\big(I^{Z_S}\big(u\dent\big)\big)\equiv 0$ have both been exploited in the derivation above.)}


This, after dividing by $\smash{\dent}(\mu)$, provides an affine decomposition of the residual with $n_F+n_f$ terms. Hence, its norm (squared) admits an affine decomposition with $(n_F+n_f)^2$ terms. In particular, assuming an offline-online framework \cite{Hesthaven2016, Quarteroni2015, Rozza2008}, the separable terms of the residual can be precomputed offline, i.e. together with the $S$ expensive evaluations of the target solution map $u$, without increasing the complexity of the overall MOR procedure.\hide{ Also, the evaluation of the residual at a (new) parameter $\mu$ can be carried out online through low-dimensional operations only. The resulting complexity of such procedure is $\mcO((n_FS+n_f)^2)$, independently of the size of the original problem \cref{eq:affine_decomposition}.}

Of course, the discussion above can intrinsically be applied only in an intrusive fashion, since we require access to the operators defining the parametric problem. In general, obtaining an \emph{a posteriori} non-intrusive error indicator can be quite tricky. Here we propose a simplified approach, which is rigorous only for very specific problem structures, namely
\begin{equation}\label{eq:affine_decomposition_linear}
\mcF_\mu(u)=F(\mu)u-f=(F_0+\mu F_1)u-f\text{.}
\end{equation}
For such problems, thanks to \cref{eq:interpolant_explicit} and \cref{eq:affine_residual_decomposition}, it must hold
\begin{align*}
\dent(\mu)\mcF_\mu\left(\padet(\mu)\right)=&\omega^{Z_S}(\mu)F_1\sum_{\mu'\in Z_S}\frac{\dent(\mu')u(\mu')}{\left.\omega^{Z_S}\right.'(\mu')}\\
=&\frac{\omega^{Z_S}(\mu)}{(S-1)!}F_1\dmu{S-1}I^{Z_S}(u\dent)\text{.}
\end{align*}
Now it suffices to take the $\normW{\cdot}$-norm and exploit \cref{eq:target_functional} to obtain
\begin{align}
\normW{\mcF_\mu\left(\padet(\mu)\right)}=&\frac{1}{(S-1)!}\normW{F_1\dmu{S-1}I^{Z_S}(u\dent)}\abs{\frac{\omega^{Z_S}(\mu)}{\dent(\mu)}}\label{eq:residual_bound_linear}\\
\leq&\left(\sup_{v\in V\setminus\{0\}}\frac{\normW{F_1v}}{\normV{v}}\right)\jfunt\left(\dent\right)\abs{\frac{\omega^{Z_S}(\mu)}{\dent(\mu)}}\text{.}\label{eq:residual_bound_linear_ineq}
\end{align}

%

One obvious limitation of \cref{eq:residual_bound_linear} and \cref{eq:residual_bound_linear_ineq} is the need to evaluate or bound expensive norms (either the $W$ one or the $V\to W$ operator one) involving $F_1$. We remark that this drawback can be interpreted as an issue in identifying the exact scaling of the error estimator, a common problem even for stable parametric problems, where the inf-sup (or coercivity) constant must be estimated to link residual and error \cite{Huynh2007, Quarteroni2015, Rozza2008}. If one can overcome such limitation, see \cref{sec:scattering}, \cref{eq:residual_bound_linear} and \cref{eq:residual_bound_linear_ineq} are certainly quite appealing from a computational standpoint, since their evaluation consists only of few scalar computations.

If \cref{eq:affine_decomposition_linear} does not hold, we can replace $F(\mu)$ by a suitable linear approximation (e.g. its first-order Taylor series around some parameter) in a spirit similar to that of the Empirical Interpolation Method. Then \cref{eq:residual_bound_linear} and \cref{eq:residual_bound_linear_ineq} can be applied heuristically, with an accuracy which may depend quite sharply on the smoothness of $\mcF_\mu$, in particular on its second-order variations over the parameter domain.

As a supplement to our discussion, we would like to note that \emph{a posteriori} error indicators are often used in RB procedures not only to determine whether the required accuracy has been achieved, but also to drive the selection of the sample points. For instance, in the weak-greedy RB approach \cite{Quarteroni2015, Rozza2008}, one keeps adding a new snapshot at the parameter value which maximizes the \emph{a posteriori} error indicator of choice, thus updating the surrogate model (and the error indicator as well), until convergence. We envision that a similar adaptive procedure could be devised also for minimal rational interpolation, with new parameter values being added to the sample set according to the same logic.

\section{Extensions to multiple parameters}\label{sec:conclusions_multi}
\davide{Throughout our discussion, we have only dealt with the approximation of univariate meromorphic solution maps. However, it is often of interest to approximate the solution map of multi-parameter problems: for example, in the analysis of parametric dynamical systems, one may need to approximate the solution of
\begin{equation}\label{eq:multi_par}
(A(\pp)-\mu E(\pp))u(\mu,\pp)=B(\pp),\quad\text{with }\pp\in\C^{d-1},
\end{equation}
where $\pp$ is a collection of physical or geometric parameters. For such problems, it is often quite complicated to prove properties of $u$, jointly in $\mu$ and $\pp$; however, the analysis is much simpler if the extra parameters $\pp$ are kept fixed (``marginalized out'') and one looks at the univariate solution map $u_{\pp}(\mu):\mu\mapsto u(\mu,\pp)$.

This has lead to the development of \emph{parametric MOR} (pMOR) \cite{Baur2011, Ferranti2011, Panzer2010, Xiao2019, Yue2019}, whose core idea is the following: take a set of parameter values $\{\pp_1,\ldots,\pp_T\}\subset\C^{d-1}$ and, for each $j=1,\ldots,T$, build a surrogate $\widetilde{u}_{\pp_j}$ of $u_{\pp_j}$ (this involves computing the snapshots $u(\mu_i,\pp_j)$ for $i=1,\ldots,S_j$, and applying a single-parameter MOR strategy, with respect to $\mu$ only); then, the joint reduced model $\widetilde{u}$ of $u$ is constructed as a suitable combination of $\widetilde{u}_{\pp_1},\ldots,\widetilde{u}_{\pp_T}$.

In a pMOR framework, one could apply minimal rational approximation to obtain the surrogates in frequency only $\{\widetilde{u}_{\pp_j}\}_{j=1}^T$. In particular, the results presented in the previous sections apply: for instance, the residual estimator in \cref{sec:Greedy} is still valid, allowing for a greedy approach with respect to $\mu$. This line of approximation for multi-parameter problems is currently under investigation.

There are several reasons to prefer pMOR over global MOR (in $d$ parameters) for problems like \cref{eq:multi_par}. Many of them are related to the curse of dimensionality, which forces to take a large number of snapshots, and to build a surrogate model of considerable size, undermining the cost-effectiveness of MOR. We refer to the general pMOR literature cited above for a broader discussion.

Here, we wish to remark that these disadvantages affect also the natural extension of minimal rational interpolation to $d$ parameters, which we could define as follows: assume that the sets of polynomials $\Pspaced{N}$ and $\PspacedN{N}$, as well as the interpolation operator $I^{\Xi_S}$, are suitably defined (for instance, $\Pspaced{N}$ could denote the space of polynomials of \emph{total} degree at most $N$); the optimization problem which yields the surrogate denominator $\den$ is now: find $\den\in\PspacedN{N}$ which minimizes over $\PspacedN{N}$ the convex functional
\begin{equation*}
Q\;\mapsto\;\jfun(Q)^2=\stackrel[\alpha_1+\ldots+\alpha_d=M]{}{\sum_{\alpha_1=0,1\ldots}\cdots\sum_{\alpha_d=0,1\ldots}}\normV{\frac{1}{\alpha_1!\cdots\alpha_d!}\derivi{p}{1}{\alpha_1}\cdots\derivi{p}{d-1}{\alpha_{d-1}}\dmu{\alpha_d}I^{\Xi_S}(uQ)}^2\text{,}
\end{equation*}
where $M$ is the maximal total degree of the interpolant $I^{\Xi_S}(uQ)$, so that, in general, $S\sim M^d$. In practice, an algorithm for minimizing this functional can be found as a generalization of the strategy proposed in \cref{sec:Preliminary results}; while we omit the details here, we remark that the Gramian matrix $\Psi$ representing $\jfun$, see \cref{eq:quadratic_form} for the univariate case, becomes of size approximately $S^{2-1/d}\times N^d$.

In addition to the other limitations of multivariate minimal rational interpolation, we also observe that it is impossible to extend directly our single-parameter theory to the multivariate case, since many of the core ideas (e.g. the barycentric expansion \cref{eq:interpolant_explicit}) do not generalize nicely to more than one dimension. Overall, also considering the numerical issues intrinsic to high-dimensional interpolation, we believe minimal rational approximants to be an appropriate MOR technique only in low dimension: extensions to many parameters should rely on pMOR or similar approaches.
}

\section{Numerical examples}\label{sec:examples}
\davide{In this section, we perform some numerical experiments to show the usefulness of minimal rational interpolation, and to verify the theoretical convergence rates and error indicator. The first example satisfies the assumption under which our results have been derived; in particular, the residues are orthogonal. Instead, the solution map in the second example can be proven meromorphic, but not necessarily of the form \cref{eq:map_decomposition}. For the sake of reproducibility, the code used to run the simulations is available at \cite{Zenodo}.}

\subsection{Normal eigenvalue problem}\label{sec:eigenvalue}
Let $n=100$ and take $A\in\C^{n\times n}$ a normal matrix whose eigenvalues are randomly generated according to a uniform distribution over $\{x+\iota y, (x,y)\in[-5,5]^2\}$, and whose eigenvectors are prescribed by orthonormalizing a matrix with random (complex) standard gaussian entries. In particular, we order the eigenvalues $\{\lambda_j\}_{j=1}^n$ according to their distance from 0.

Our task is to estimate the eigenvalues of $A$ which lie within the unit disk $\smash{K=\overline{\ball{0}{1}}}$. To this aim, we fix $\vv\in\C^n$ a random gaussian vector, and we approximate by minimal rational interpolants the solution map of the parametric problem
\begin{equation}
\text{for }\mu\in K,\;\text{find }\uu(\mu)\in\C^n\;\text{s.t.}\;\left(A-\mu I\right)\uu(\mu)=\vv\text{,}
\end{equation}
with $I\in\C^{n\times n}$ the identity matrix. \davide{We endow $\C^n$ with the Euclidean scalar product.} As sample points $\Xi_S$, we take the $S$-th roots of unity, and we choose to employ the polynomial norm $\normT{\cdot}_{0,N}$, see \cref{eq:monomial_norm}. This norm is induced by monomials, which are $L^2(\partial K)$-orthogonal. Moreover, the action of the interpolation operator $\smash{I^{\Xi_S}}$ can be evaluated quite efficiently as a Fast Fourier Transform.

%
%
Let us assume that our interest lies in approximating just the eigenvalues of $A$ inside $K$. As their exact number is unknown, we can choose the denominator degree $N$ according to different strategies:
\renewcommand{\labelenumi}{\textbullet}
\begin{enumerate}
\item we fix $N$ (in our case, we set $N=10$) and increase progressively the number of samples $S$; to verify if the starting guess on $N$ was large enough, we can simply check if at least one of the approximate poles has converged (for large $S$) to some point outside $K$, see \cref{th:poles_N}.
\item we can increase $N$ along with the number of samples $S$, for instance by choosing $N=S-1$ for all $S$; in this case we are sure to approximate all poles, see \cref{th:poles_notN}.
\end{enumerate}

We compare visually the two approaches in \cref{fig:ex1_error}, where we show the error norm obtained by approximants of type $[20/10]$ and $[20/20]$, both based on $S=21$ samples at roots of unity. We can see that the error increases much quicker near the boundary of the sampled square in the case $N=10$. This is to be expected, as the region of convergence increases together with $N$, see \cref{th:error_N}. A (possibly) more interesting observation is that the error appears to be globally smaller when we choose $N=20$. Overall, we can conclude that the choice $N=S-1$ leads to a better approximant.
\begin{figure}[t]
\begin{center}
\begin{minipage}{.45\textwidth}
\begin{center}
\hspace{-.15cm}%
\includegraphics[scale = .95]{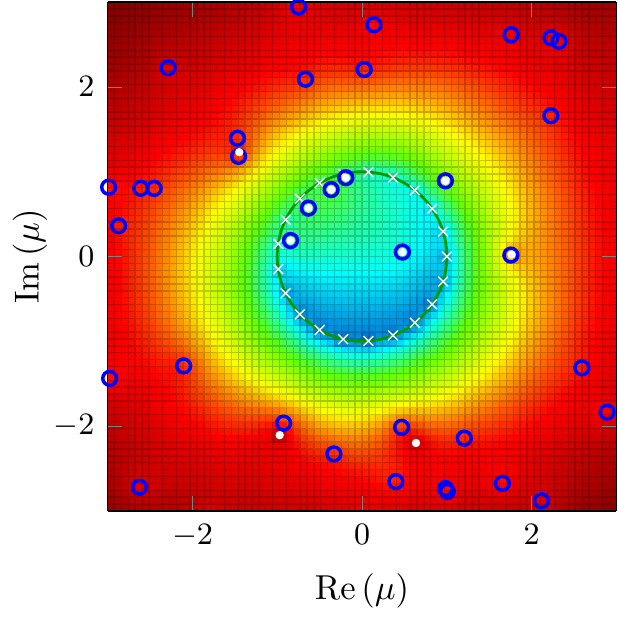}%
\end{center}
\end{minipage}%
\begin{minipage}{.55\textwidth}
\begin{center}
\vspace{-.05cm}
\includegraphics[scale = .95]{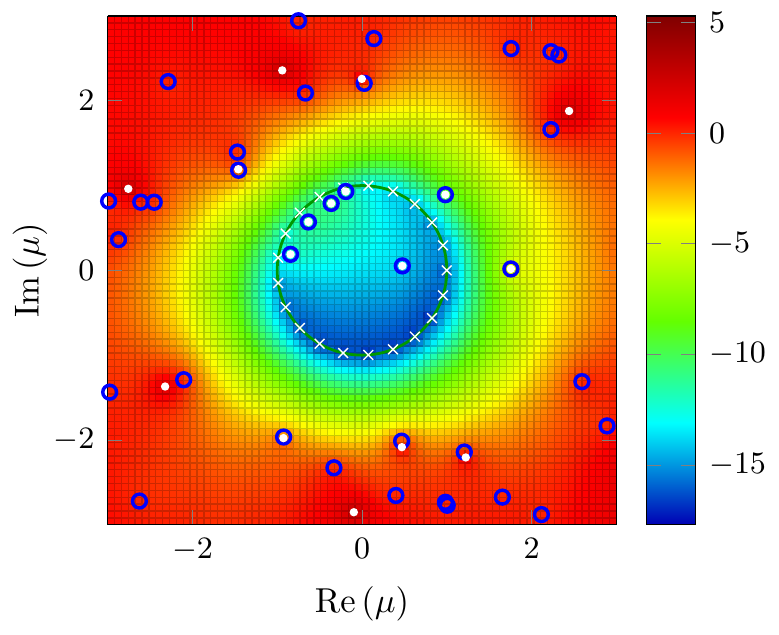}
\end{center}
\end{minipage}
\caption{Natural logarithm of the relative error norm $\|\uu-\uu_N^{\Xi_S}\|_2/\norm{\uu}{2}$ achieved by minimal rational interpolants based on samples at $\Xi_S=\{e^{2j\pi\iota/21}\}_{j=1}^{21}$ (black crosses) with $N=10$ (left) and $N=20$ (right). The exact and approximated poles are represented in blue and white respectively.}
\label{fig:ex1_error}

\vspace{.25cm}
\includegraphics[scale = .95]{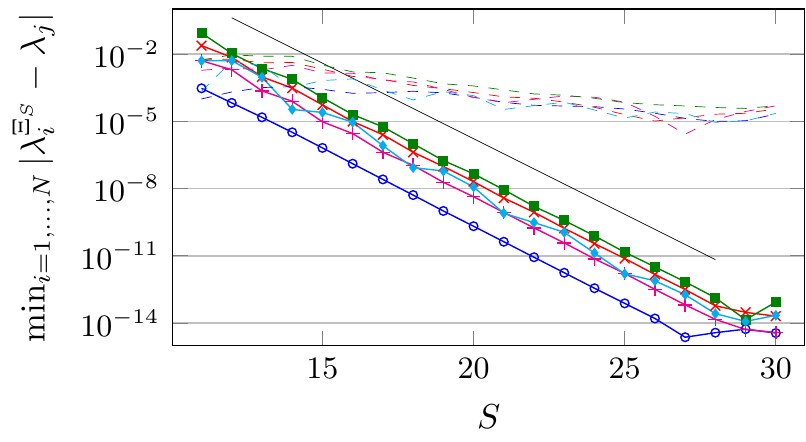}\hspace{0.cm}%
\includegraphics[scale = .95]{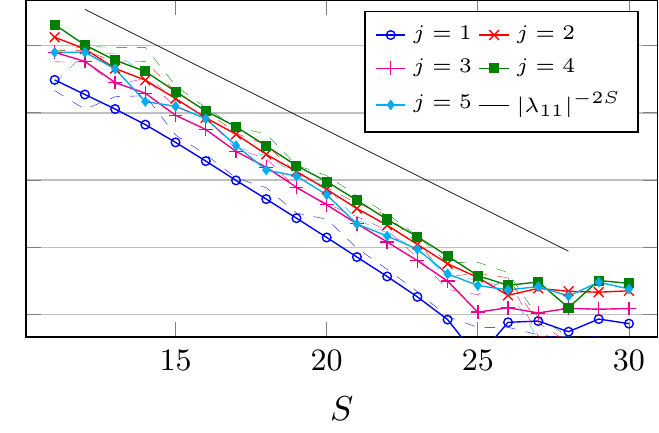}
\caption{Pole approximation error for the poles inside $K$. On the left $N=10$ is kept fixed, whereas $N=S-1$ on the right. In black the theoretical rate \eqref{eq:poles_N_inside}. The dashed lines show the results obtained by projection on subspaces of dimension $N$ computed through POD.}
\label{fig:ex1_poles_err}
\end{center}
\end{figure}

Now it just remains to check how well each approach manages to capture the location of the eigenvalues of $A$. To this aim, we increase $S$ from $11$ to $30$, and compute the pole approximation error, defined in the left hand side of \cref{eq:poles_N}, for the 5 poles within $K$. The results are shown in \cref{fig:ex1_poles_err}. For fixed $N=10$, the rate predicted by \cref{eq:poles_N_inside} can be observed (Green's potential for a centered disk coincides with the complex magnitude outside the disk). Remarkably, a similar, if not slightly better, rate of convergence is obtained for increasing $N=S-1$, despite the absence of theoretical results in this regard.

In \cref{fig:ex1_poles_err} we also show the pole approximation error obtained through a RB-like projection technique: given the same sample set $\Xi_S$, we perform a Proper Orthogonal Decomposition (POD) \cite{Quarteroni2015} of the samples, identifying the $N$ dominant ``sample directions''; then we project orthogonally the original problem on the subspace spanned by these $N$ directions, and we use the solution of the resulting $N\times N$ eigenvalue problem to approximate the original one. We can observe that the projection technique performs much worse than minimal rational interpolants for fixed $N=10$, whereas for $N=S-1$ the two methods achieve very similar results.

\begin{figure}[t]
\begin{center}
\begin{minipage}{.45\textwidth}
\begin{center}
\hspace{-.15cm}%
\includegraphics[scale = .95]{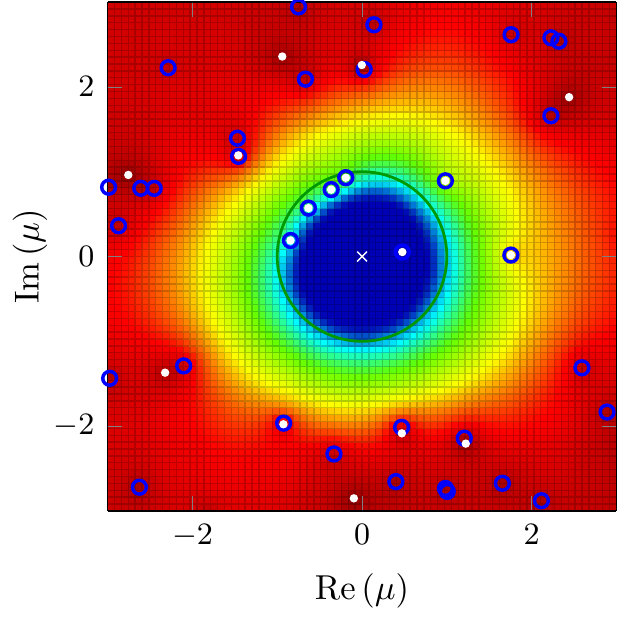}%
\end{center}
\end{minipage}%
\begin{minipage}{.55\textwidth}
\begin{center}
\vspace{-.05cm}
\includegraphics[scale = .95]{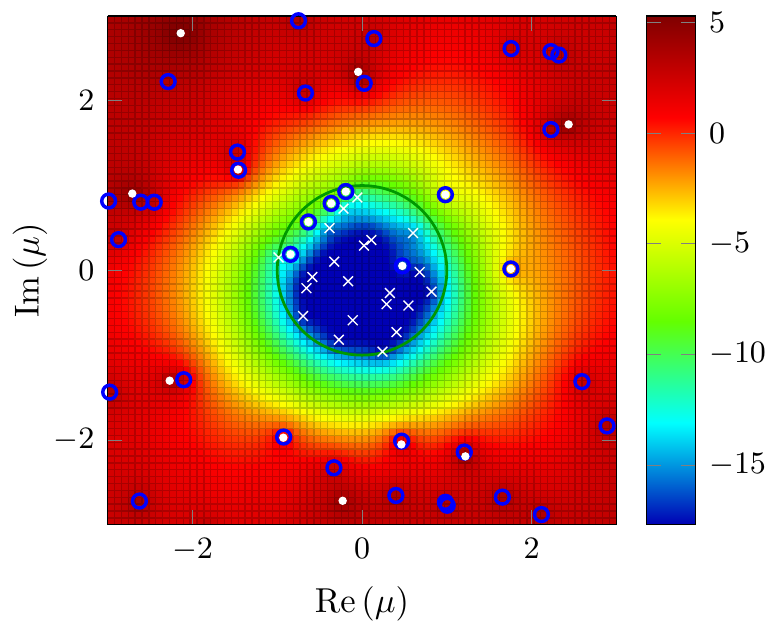}
\end{center}
\end{minipage}
\caption{\davide{Logarithm of the relative error norm $\|\uu-\uu_N^{\Xi_S}\|_2/\norm{\uu}{2}$ achieved by minimal rational interpolants based on samples (white crosses) at the origin (left) and at quasi-random points (right) with $N=20$. The exact and approximated poles are represented in blue and white respectively.}}
\label{fig:ex1_error_oth}

\vspace{.25cm}
\includegraphics[scale = .95]{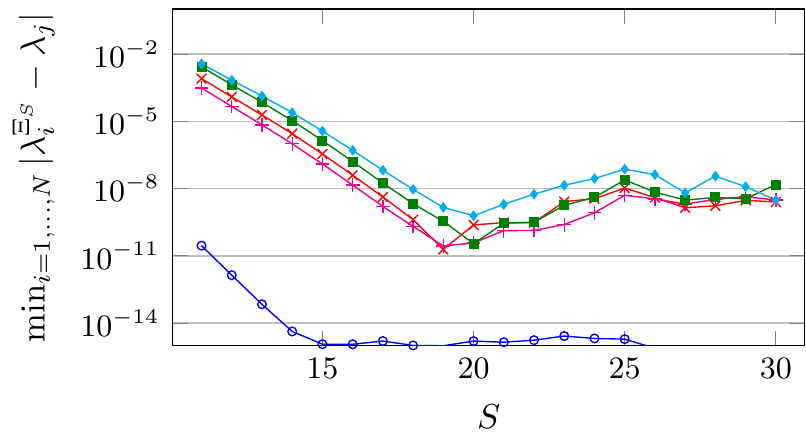}%
\includegraphics[scale = .95]{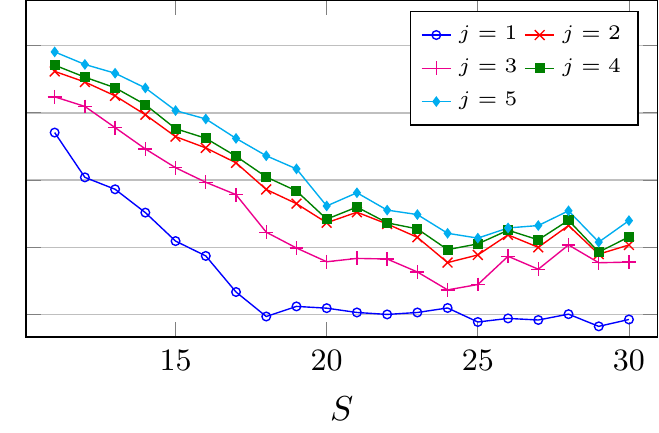}
\caption{\davide{Pole approximation error for the poles inside $K$. The samples are all at the origin on the left, whereas on the right the samples are at quasi-random points. In both cases we keep $N=S-1$.}}
\label{fig:ex1_poles_err_oth}
\end{center}
\end{figure}

\davide{
Lastly, we wish to test two other approximation strategies: a fast LS Pad\'e approximant \cite{Bonizzoni2018b} centered at the origin, and a minimal rational interpolant based on quasi-random (Halton) points in the unit disk. We show the error norms and the approximated poles in \cref{fig:ex1_error_oth} for $N=S-1=20$. Qualitatively, it appears that, for both choices, the accuracy in the approximation of the resonances is similar to that achieved with samples at the roots of unity. With respect to the $[20/20]$ minimal rational interpolant in \cref{fig:ex1_error}, the errors for the two new approximants appear smaller within $K$, but larger outside $K$; this is to be expected, since the new sample points are closer to the origin.
}

\davide{
More quantitatively, we show in \cref{fig:ex1_poles_err_oth} the convergence of the pole approximation error with respect to $S$ (here as well, we keep $N=S-1$). For fast LS Pad\'e approximants, when $S$ is small, the error converges quite quickly (the error is uniformly smaller than the one obtained with samples at the roots of unity), but then it stagnates, and even increases starting from $S=20$; this is a symptom of the instability which affects the construction of this centered approximant, namely the computation of high-order derivatives of a meromorphic function.

The results for quasi-random sample points appear relatively similar, with the error flattening out starting from $S=20$.
In this case, the non-monotone behavior of the error for large $S$ is due to the interaction between two conflicting effects: the addition of sample points close to the resonances (which would, on its own, improve the accuracy of the approximation) and the ill-conditioning of the interpolation problem (for $S=30$, the Lebesgue constant is $\sim 10^7$, as opposed to $\sim 3$ for the $30$-th roots of unity).
}

\subsection{Time-harmonic vibrations of a tuning fork}\label{sec:scattering}
Let $\Omega\subset\R^3$ be the region occupied by a tuning fork. Assume that the tuning fork is clamped on a portion $\Gamma_D$ of the handle. Let $\smash{\widetilde{\gbf}}(t,\xx)=e^{\iota 2\pi\overline{\nu}t}\gbf(\xx)$ be a pressure pulse (sinusoidal in time and gaussian in space) impinging on a portion $\Gamma_{\gbf}$ of the exterior of the tuning fork, which we assume to be sound-hard. See \cref{fig:ex2_domain} for a graphical representation of the setup.

\begin{figure}[t]
\begin{center}
\hspace{-1.5cm}
\begin{overpic}[height = 1.8cm, trim = {1.25cm, 1.5cm, .9cm, 1.95cm}, clip]{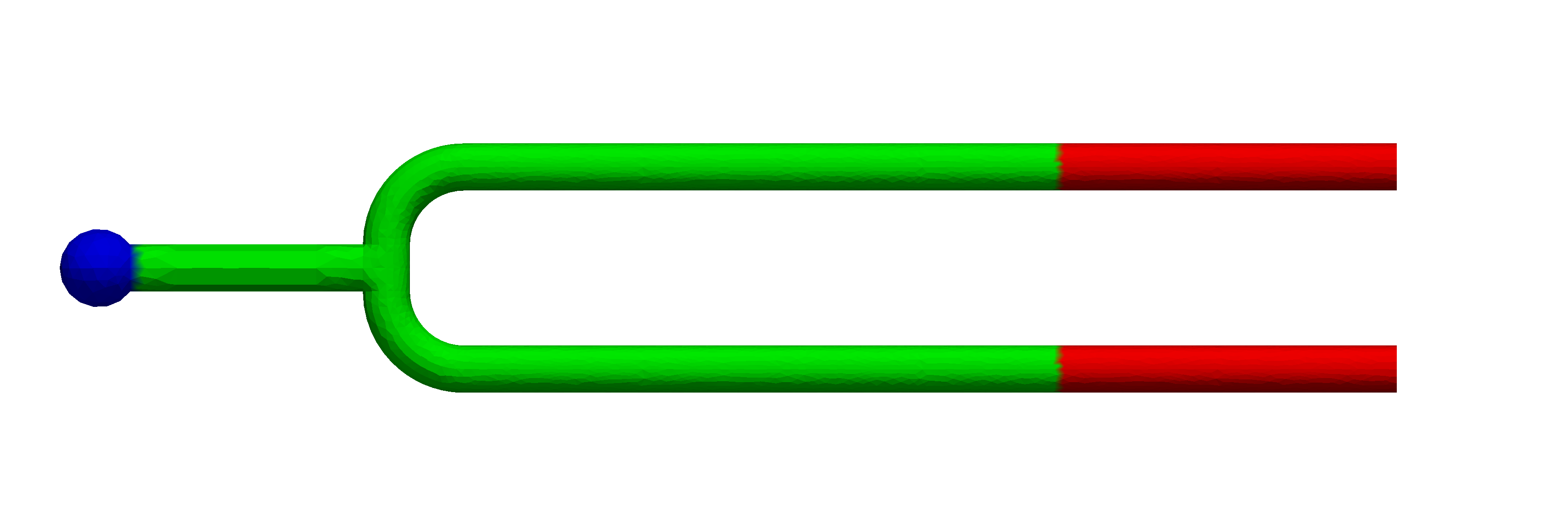}
 \put (2,5) {$\Gamma_D$}
 \put (32,13.5) {$\Gamma_{free}$}
 \put (80,10.5) {$\Gamma_{\gbf}$}
\end{overpic}
\begin{overpic}[height = 1.8cm, trim = {1.25cm, 1.5cm, .9cm, 1.75cm}, clip]{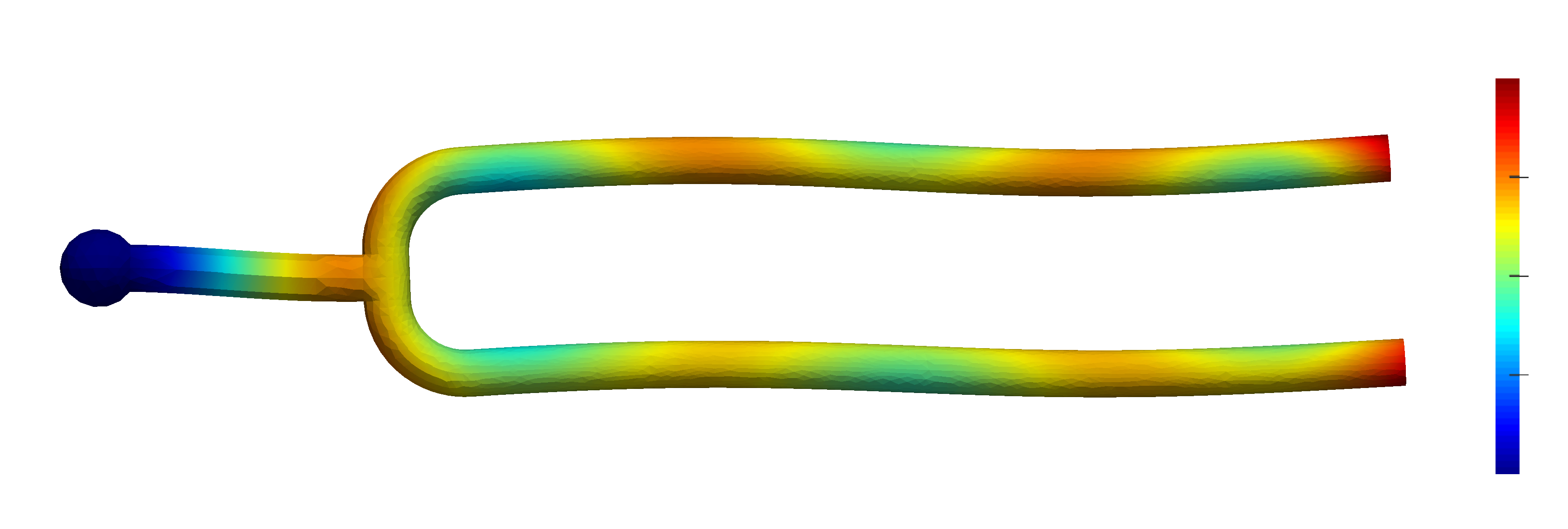}
 \put (100.5,-.75) {\scriptsize $0$\hspace{.85cm}[m]}
 \put (100.5,5.125) {\scriptsize $9.54\cdot 10^{-8}$}
 \put (100.5,11.75) {\scriptsize $1.91\cdot 10^{-7}$}
 \put (100.5,17.875) {\scriptsize $2.86\cdot 10^{-7}$}
 \put (100.5,24.25) {\scriptsize $3.82\cdot 10^{-7}$}
\end{overpic}
\caption{Reference domain (left) and domain warped by the real part of the displacement, magnified by a factor of $3\cdot 10^3$ (right). The displacement is the solution of \eqref{eq:helmholtz_fork} for frequency $\nu=10$ kHz and damping coefficient $\eta=0$ Hz. The total number of degrees of freedom of the discretized problem is 36330.}
\label{fig:ex2_domain}
\end{center}
\end{figure}

We are interested in computing the time-harmonic deformation $\uu(\nu)$ with frequency $\nu$ which the tuning fork undergoes. Assuming the tuning fork to have density $\rho$ and linear stress constitutive relation $\sigma(\cdot)$, the displacement $\uu(\nu)$ can be found by solving a (damped) time-harmonic linear elasticity problem
\begin{equation}\label{eq:helmholtz_fork}
\begin{cases}
-\nabla\cdot\sigma(\uu(\nu))-2\pi\nu(2\pi\nu-\eta\iota)\rho\uu(\nu)=\bm{0}\quad&\text{in }\Omega\\
\uu(\nu)=\bm{0}\quad&\text{on }\Gamma_D\\
\sigma(\uu(\nu))\nn=\gbf\quad&\text{on }\Gamma_{\gbf}\\
\sigma(\uu(\nu))\nn=\bm{0}\quad&\text{on }\Gamma_{free}=\partial\Omega\setminus\Gamma_D\setminus\Gamma_{\gbf}
\end{cases}\text{.}
\end{equation}
\davide{As $V$-inner product, we choose the elastic energy inner product:
\begin{equation*}
\dualV{\uu}{\vv}=\int_\Omega\sigma(\uu):\varepsilon(\vv)\textrm{d}\xx=\int_\Omega\sigma(\uu):\frac{\nabla\vv+\nabla\vv^\top}2\textrm{d}\xx.
\end{equation*}}

In particular, we choose to approximate problem \cref{eq:helmholtz_fork} using $\mathbb{P}_1$ FE on a sufficiently fine tetrahedral discretization of $\Omega$. The FE discretization of \cref{eq:helmholtz_fork} defines a parametric problem of the form \cref{eq:parametric_problem}, whose solution map $\uu$ can be proven meromorphic using compactness arguments \cite{Bonizzoni2018, Lenoir1992}. Thus, we decide to approximate it for $\nu\in K=[10,40]$ kHz with minimal rational interpolants relying on samples at the Chebyshev nodes of $K$. The chosen polynomial norm $\normT{\cdot}_N$ is the one induced by the family of Chebyshev polynomials over $K$, see \cref{sec:Convergence preliminaries}.

\begin{figure}[htp]
\begin{center}
\includegraphics[height = 5cm]{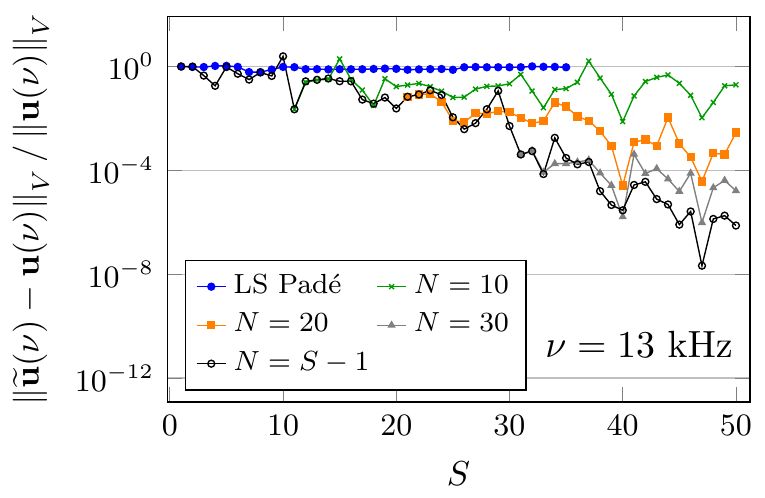}%
\hspace{-.3cm}
\includegraphics[height = 5cm]{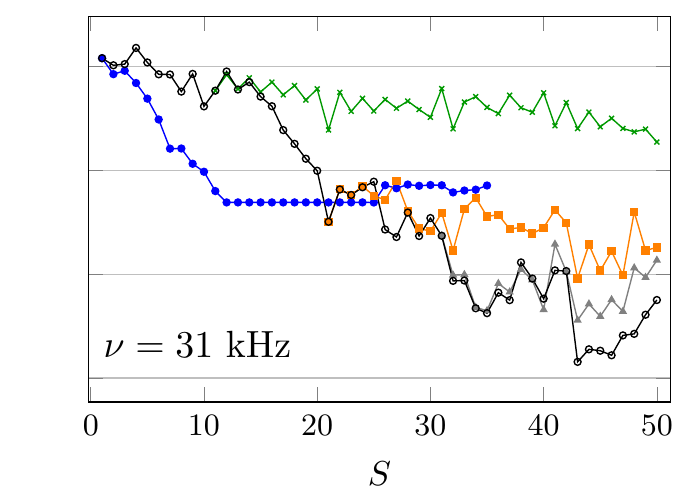}
\caption{Convergence of the approximation error in the energy norm with respect to the number of samples at $\nu=13$ kHz (left) and $\nu=31$ kHz (right).}
\label{fig:ex2_conv}

\vspace{.75cm}
\includegraphics[height = 5cm]{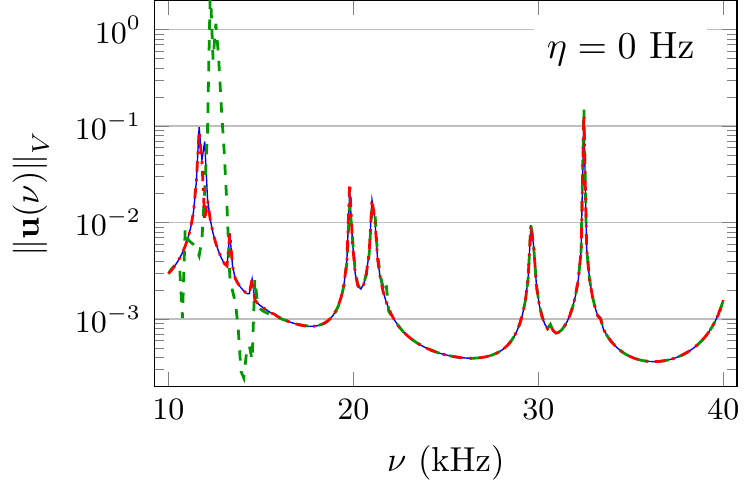}%
\hspace{.2cm}
\includegraphics[height = 5cm]{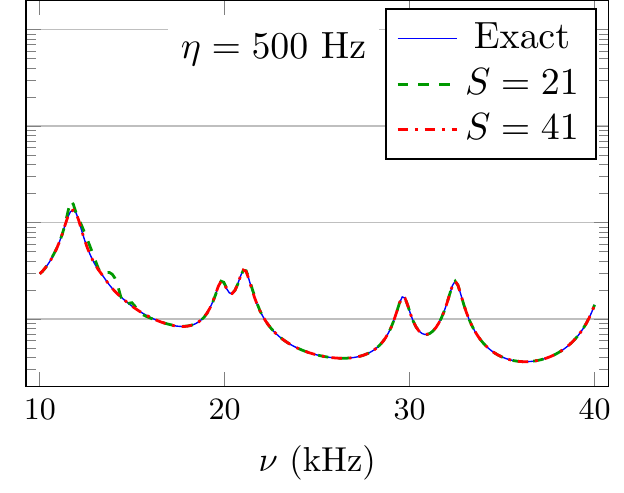}
\caption{Energy norm of the solution map $\uu$ of \eqref{eq:helmholtz_fork} in blue, for $\eta\in\{0,500\}$ Hz. In green and red the norm of the $[20/20]$ and $[40/40]$ minimal rational interpolant of $\uu$.}
\label{fig:ex2_norm}

\vspace{.75cm}
\includegraphics[height = 5cm]{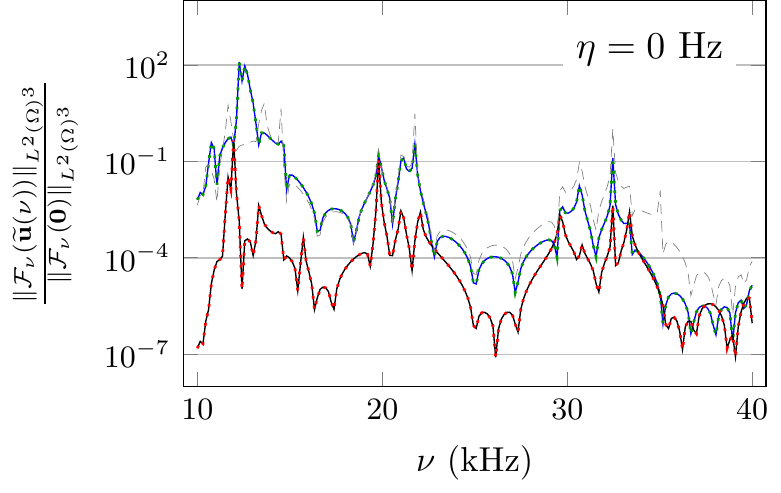}%
\hspace{.2cm}
\includegraphics[height = 5cm]{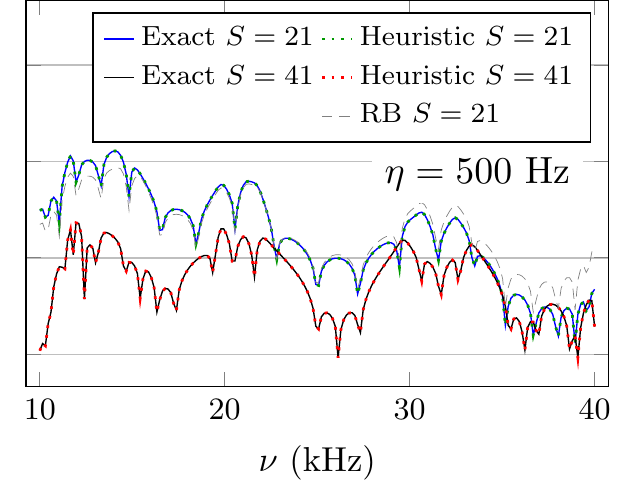}
\caption{Relative $L^2(\Omega)^3$-norm of the residual for problem \eqref{eq:helmholtz_fork} and $\widetilde{\uu}=\padegen{\uu}{20}{\Xi_{21}}$ (blue) and $\widetilde{\uu}=\padegen{\uu}{40}{\Xi_{41}}$ (black). In green and red the heuristic approximations \eqref{eq:residual_bound_linear} of the residual. The dashed grey line is the residual of the RB approximation of $\uu$ with samples at $\Xi_{21}$.}
\label{fig:ex2_res}
\end{center}
\end{figure}

\davide{First, we investigate the behavior of the relative approximation error $\frac{\normV{\widetilde{\uu}(\nu)-\uu(\nu)}}{\normV{\uu(\nu)}}$ at $\nu=13$ kHz and $\nu=31$ kHz, for $\eta=0$ Hz. We apply different approximation strategies: for $S=1,\ldots,51$, we consider $[S-1/S-1]$ fast LS Pad\'e approximants \cite{Bonizzoni2018b} with $S$ samples at the center of $K$, $[S-1/N]$ minimal rational approximants for fixed $N=10,20,30$, and diagonal $[S-1/S-1]$ minimal rational approximants.}

\davide{The results are shown in \cref{fig:ex2_conv}. 
As expected from \cref{th:error_N}, minimal rational approximants of type $[S-1/10]$ do not converge ($K$ contains 16 resonances), whereas the ones of type $[S-1/20]$ and $[S-1/30]$ do; we can observe that the diagonal approximant $[S-1/S-1]$ yields the best results overall.

The error achieved by fast LS Pad\'e approximants converges extremely quickly at $\nu=31$ kHz for small $S$, but no convergence can be observed at $\nu=13$ kHz; this is reasonable, since this type of approximant, relying on information at a single point, can deliver an extremely accurate surrogate close to the center, but the quality of the approximation degrades as we move further away. Also, we observe that the convergence plots for fast LS Pad\'e approximants stop decreasing already at $S=11$, and terminate abruptly at $S=35$: for $S\geq 36$, the construction of the surrogate model fails, due to conditioning issues in the evaluation of the derivatives of the solution map, and in the application of the (Taylor) interpolation operator \cite{Bonizzoni2018b,BonizzoniPAMM}.}

The norm of the solution map is shown in \cref{fig:ex2_norm} for $\eta\in\{0,500\}$ Hz, along with the approximated norm profiles, obtained by minimal rational interpolants of type $[20/20]$ and $[40/40]$. If $S=21$, for both values of $\eta$, we can observe that the approximation of $\normV{\uu}$ is quite accurate except for the low-frequency region, where the approximant for $\eta=0$ Hz actually appears quite unstable. This behavior can be improved by increasing the number of samples to $S=41$.

In a (more) realistic application, in order to determine whether there is actual need for more than 21 samples, one could rely on residual-based estimators such as \cref{eq:residual_tolerance}. To this aim, we can apply \cref{eq:affine_residual_decomposition} to obtain an expression for the residual in the $L^2(\Omega)^3$-norm. The evaluation of \cref{eq:affine_residual_decomposition} on a fine grid is shown in \cref{fig:ex2_res}.

In the same Figure, we also show the values obtained through the simplified residual estimator \cref{eq:residual_bound_linear_ineq}. In particular, in order to evaluate the estimator, we need to compute or approximate the operator norm $\norm{F_1}{\mcL(V;L^2(\Omega)^3)}$, with $F_1$ the derivative of the operator in \cref{eq:helmholtz_fork} at some point $\mu'$, see \cref{sec:Greedy}. \davide{We manage to avoid this, thus keeping the estimator as non-intrusive as possible, by the following argument: if we assume that \cref{eq:residual_bound_linear} holds approximately (the problem is quadratic and not linear in the parameter), we can identify a constant $C$, independent of $\mu$,} for which, employing the notation of \cref{sec:Greedy},
\begin{equation*}
\normW{\mcF_\mu\left(\padet(\mu)\right)}\approx C\abs{\frac{\omega^{Z_S}(\mu)}{\dent(\mu)}}\text{.}
\end{equation*}
\davide{In practice, the value of $C$ can be easily found} by evaluating (offline and at a relatively low computational cost) the exact value of the residual at some point $\mu'\in K\setminus Z_S$, so that the value of $C$ can be set to
\begin{equation*}
C=\normW{\mcF_\mu\left(\padet(\mu')\right)}\abs{\frac{\dent(\mu')}{\omega^{Z_S}(\mu')}}\text{.}
\end{equation*}
In our example we \davide{choose} $\mu'=25.5$ kHz, and we can observe the corresponding residual estimator to be extremely accurate. Indeed, the values obtained with the two estimators never differ by more than 1\%, and the corresponding curves cannot be distinguished in the plot.

Finally, in \cref{fig:ex2_res} we also perform a comparison between minimal rational interpolants and RBs, by showing the norm of the residual obtained with the RB approximant of \cref{eq:helmholtz_fork} computed from samples at $\Xi_{21}$. We can observe that the two residuals are quite similar in terms of both order of magnitude and behavior with respect to $\nu$. In particular, simplifying grossly, minimal rational interpolants seem to have a very slight edge on RBs at higher frequencies, while the opposite appears to be true at lower frequencies.

\section{Conclusions}\label{sec:conclusions}
In this paper we have presented minimal rational interpolants as the natural extension of fast LS-Pad\'e approximants \cite{Bonizzoni2018b}, and we have shown that most of the theoretical properties of this latter technique generalize nicely to the former. The new method can be considered superior to the original one for two main reasons:
\renewcommand{\labelenumi}{\textbullet}
\begin{enumerate}
\item the stability is greatly improved, since computing derivatives of the solution map accumulates numerical errors; the Arnoldi-like reorthogonalization technique described in \cite{Bonizzoni2018b} can somehow limit but not totally eliminate this issue;
\item the distributed sampling helps to achieve a globally (e.g. using the $L^\infty(K)$-metric) small approximation error, which is the target in most applications; of course, an interpolatory approach cannot be as accurate close to the samples as a Taylor-type approximation, assuming the number of samples to be the same.
\end{enumerate}

Our numerical example has shown minimal rational interpolants to preform well in the MOR \davide{of a normal eigenproblem and} of a time-harmonic problem. In particular, we have shown the effectiveness of a heuristic and cheap to compute \emph{a posteriori} residual estimator. Accordingly, we deem of interest to investigate a possible greedy-type algorithm, in the same spirit as the one for the RB method, but with minimal rational interpolants replacing Galerkin projection. In particular, it is still unclear \davide{how well} the approximation properties of the method are preserved if the samples are not computed at \davide{optimal (Fej\'er)} points.

\subsection*{Acknowledgments}
The author gratefully thanks Fabio Nobile for the many fruitful discussions and for his helpful advice at several stages of this work.

\appendix
\section{Polynomial norm bounds}\label{ap:with_scale}
Let $\Gamma\subset\C$ be a Jordan curve, and consider $\{\varphi_j\}_{j=0,1,\ldots}$ a hierarchical (degree$(\varphi_j)=j$ for all $j$) family of polynomials, orthonormal over $L^2(\Gamma)$:
\begin{equation}
\frac{\int_{\Gamma}\varphi_j\overline{\varphi_i}\abs{\text{d}l}}{\int_{\Gamma}\abs{\text{d}l}}=\delta_{ij}\quad\forall(i,j)\in\{0,1,\ldots\}^2\text{.}
\end{equation}
\davide{For instance, if $\Gamma=\partial\ball{\mu_0}{1}$, we can choose $\varphi_j(\mu)=\mu^j$ for all $j$.}

This, for all $N\in\N$, induces a norm over $\Pspace{N}$ in the following sense:
\begin{equation}\label{eq:hierarchical_norm}
\normT{Q}_N=\sqrt{\sum_{j=0}^N\abs{q_j}^2},\quad\text{where}\quad Q=\sum_{j=0}^Nq_j\varphi_j.
\end{equation}
The following holds.
\begin{lemma}
The norm $\normT{\cdot}_N$ in \cref{eq:hierarchical_norm} satisfies \cref{as:with_scale} for any $\mu_0$ chosen within the interior of $\Gamma$.
\end{lemma}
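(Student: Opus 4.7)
The plan is to exploit the fact that, by Parseval's identity, the norm $\normT{\cdot}_N$ defined in \eqref{eq:hierarchical_norm} coincides with the (normalized) $L^2(\Gamma)$ norm on $\Pspace{N}$. Indeed, since $\{\varphi_j\}_{j=0}^N$ is orthonormal for the inner product $\frac{1}{\ell(\Gamma)}\int_\Gamma \overline{\cdot}\,\cdot\,|\mathrm{d}l|$, for any $Q = \sum_{j=0}^N q_j\varphi_j \in \Pspace{N}$ we have
\begin{equation*}
\normT{Q}_N^2 \;=\; \sum_{j=0}^N\abs{q_j}^2 \;=\; \frac{1}{\ell(\Gamma)}\int_\Gamma \abs{Q(\mu)}^2 \abs{\mathrm{d}l(\mu)}.
\end{equation*}
Both bounds in Assumption~\ref{as:with_scale} for the polynomial $Q(\mu)=\prod_{j=1}^D(\mu-z_j)$ (with $D\leq N$) will then follow by purely analytic arguments, and the resulting constants will automatically be independent of $N$.

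For the upper bound, I would set $R_{\mu_0}=\max_{\mu\in\Gamma}\abs{\mu-\mu_0}$ (finite since $\Gamma$ is compact) and use the pointwise triangle inequality $\abs{\mu-z_j}\leq R_{\mu_0}+\abs{\mu_0-z_j}$ valid for all $\mu\in\Gamma$. Substituting into the integral representation above gives
\begin{equation*}
\normT{Q}_N^2 \;\leq\; \frac{1}{\ell(\Gamma)}\int_\Gamma\prod_{j=1}^D\bigl(R_{\mu_0}+\abs{\mu_0-z_j}\bigr)^2\abs{\mathrm{d}l} \;=\; \prod_{j=1}^D\bigl(R_{\mu_0}+\abs{\mu_0-z_j}\bigr)^2,
\end{equation*}
so we may take $C_{\mu_0}=1$.

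For the lower bound, since $\mu_0$ lies in the interior of the Jordan curve $\Gamma$, the polynomial $Q$ is holomorphic in a neighborhood of $\Gamma$ and its interior, so Cauchy's integral formula applies:
\begin{equation*}
\prod_{j=1}^D\abs{\mu_0-z_j} \;=\; \abs{Q(\mu_0)} \;=\; \abs*{\frac{1}{2\pi\iota}\oint_\Gamma\frac{Q(\mu)}{\mu-\mu_0}\mathrm{d}\mu} \;\leq\; \frac{1}{2\pi\, d(\mu_0,\Gamma)}\int_\Gamma\abs{Q(\mu)}\abs{\mathrm{d}l(\mu)},
\end{equation*}
where $d(\mu_0,\Gamma)=\min_{\mu\in\Gamma}\abs{\mu-\mu_0}>0$. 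Applying Cauchy--Schwarz to pass from the $L^1$ to the (normalized) $L^2$ norm yields
\begin{equation*}
\int_\Gamma\abs{Q}\abs{\mathrm{d}l} \;\leq\; \sqrt{\ell(\Gamma)}\,\Bigl(\int_\Gamma\abs{Q}^2\abs{\mathrm{d}l}\Bigr)^{1/2} \;=\; \ell(\Gamma)\,\normT{Q}_N,
\end{equation*}
so the choice $c_{\mu_0}=\frac{2\pi\,d(\mu_0,\Gamma)}{\ell(\Gamma)}$ closes the lower bound. The only potentially delicate step is verifying that Cauchy's formula is applicable under the stated regularity of $\Gamma$ (a Jordan curve need not be rectifiable a priori); this can be handled either by tacitly restricting to rectifiable Jordan curves, for which $\ell(\Gamma)<\infty$ already had to be assumed to define the $L^2(\Gamma)$ inner product, or by observing that the same argument applies with $\Gamma$ replaced by a smooth simple closed curve enclosing $\mu_0$ in the interior of $\Gamma$, up to a harmless adjustment of the constant.
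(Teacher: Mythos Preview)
Your proof is correct and follows essentially the same approach as the paper: identify $\normT{\cdot}_N$ with the normalized $L^2(\Gamma)$ norm via Parseval, use the triangle inequality for the upper bound with $C_{\mu_0}=1$, and combine Cauchy's integral formula with Cauchy--Schwarz for the lower bound. The only cosmetic difference is the order of operations in the lower bound (you bound $1/\abs{\mu-\mu_0}$ pointwise before applying Cauchy--Schwarz with weight $1$, whereas the paper applies Cauchy--Schwarz with weight $1/(\mu-\mu_0)$ and then bounds $\int_\Gamma\abs{\mu-\mu_0}^{-2}\abs{\mathrm{d}l}$); both routes yield the same constant $c_{\mu_0}=2\pi\,d(\mu_0,\Gamma)/\ell(\Gamma)$.
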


\begin{proof}
For a given $N\in\N$, let $0\leq D\leq N$, $\{z_j\}_{j=1}^D\in\C^D$, and $Q=\prod_{j=1}^D\left(\,\cdot-z_j\right)$. Due to orthonormality, it holds
\begin{equation}
\normT{Q}_N^2\int_\Gamma\abs{\text{d}l}=\int_\Gamma\abs{Q}^2\abs{\text{d}l}=\int_\Gamma\prod_{j=1}^D\abs{\,\cdot-z_j}^2\abs{\text{d}l}\text{.}
\end{equation}

On one side, let $R_{\mu_0}=\max_\Gamma\abs{\,\cdot-\mu_0}$. We can use the triangular inequality to obtain
\begin{align}
\normT{Q}_N^2\int_\Gamma\abs{\text{d}l}\leq &\int_\Gamma\prod_{j=1}^D\left(\abs{\,\cdot-\mu_0}+\abs{\mu_0-z_j}\right)^2\abs{\text{d}l}\nonumber\\
\leq&\prod_{j=1}^D\left(R_{\mu_0}+\abs{\mu_0-z_j}\right)^2\int_\Gamma\abs{\text{d}l}\text{.}\label{eq:with_scaleR}
\end{align}
On the other hand, let $c_{\mu_0}=\left(2\pi\min_\Gamma\abs{\,\cdot-\mu_0}/\int_\Gamma\abs{\textup{d}l}\right)^2$. We can exploit the Cauchy-Schwarz inequality to show that
\begin{equation*}
\left(\int_\Gamma\prod_{j=1}^D\abs{\,\cdot-z_j}^2\abs{\text{d}l}\right)\left(\int_\Gamma\frac{\abs{\text{d}l}}{\abs{\,\cdot-\mu_0}^2}\right)\geq\abs{\int_\Gamma\frac{\prod_{j=1}^D\left(\,\cdot-z_j\right)}{\,\cdot-\mu_0}\text{d}l}^2\text{,}
\end{equation*}
so that, due to the Cauchy integral formula,
\begin{align}
\normT{Q}_N^2\int_\Gamma\abs{\text{d}l}\geq &\abs{\int_\Gamma\frac{\prod_{j=1}^D\left(\,\cdot-z_j\right)}{\,\cdot-\mu_0}\text{d}l}^2\left(\int_\Gamma\frac{\abs{\text{d}l}}{\abs{\,\cdot-\mu_0}^2}\right)^{-1}\nonumber\\
\geq&4\pi^2\prod_{j=1}^D\abs{\mu_0-z_j}^2\left(\min_\Gamma\abs{\,\cdot-\mu_0}\right)^2\left(\int_\Gamma\abs{\text{d}l}\right)^{-1}\nonumber\\
\geq&c_{\mu_0}\prod_{j=1}^D\abs{\mu_0-z_j}^2\int_\Gamma\abs{\text{d}l}\text{.}\label{eq:with_scaleL}
\end{align}
Dividing \cref{eq:with_scaleR} and \cref{eq:with_scaleL} by $\int_\Gamma\abs{\text{d}l}$ yields the proof.
\end{proof}

\bibliographystyle{plain}
\bibliography{RMOR_bib}
\end{document}